\documentclass[11pt]{amsart}
\usepackage{xypic}
\usepackage{amsmath,amssymb,amscd,amsthm,graphicx,enumerate}

 \usepackage{float}
\usepackage{hyperref}

\usepackage{xcolor}

\usepackage{mathtools}
\usepackage{enumitem}
\usepackage[normalem]{ulem}
\hypersetup{breaklinks=true}
\hypersetup{
    colorlinks=true,
    linkcolor=red,
    filecolor=magenta,      
    urlcolor=cyan,
    citecolor=green,
}

\numberwithin{equation}{section}
\usepackage[
  left=1.3in,
  right=1.3in,
  top=1.05in,
  bottom=1.05in
]{geometry}

\newcommand{\cJ}{\mathcal{J}}

\newcommand{\dist}{{\mathrm{dist}}}
 
\newcommand{\eps}{\varepsilon}
\newcommand{\be}{\begin{equation}}
\newcommand{\bee}{\begin{equation*}}
\newcommand{\bea}{\begin{equation}\begin{aligned}}
\newcommand{\eea}{\end{aligned}\end{equation}}
\newcommand{\ee}{\end{equation}}
\newcommand{\eee}{\end{equation*}}
\newcommand{\bsp}{\begin{split}}
\newcommand{\esp}{\end{split}}

\newcommand{\R}{{\mathbb R}}
\newcommand{\supp}{\mathop{\mathrm {supp}}}

\newtheorem{theorem}{Theorem}[section]
\newtheorem{proposition}[theorem]{Proposition}
\newtheorem{lemma}[theorem]{Lemma}

\theoremstyle{remark}
\newtheorem{remark}[theorem]{Remark}

\theoremstyle{remark}

\theoremstyle{definition}
\newtheorem{definition}[theorem]{Definition}

\renewcommand{\cH}{\mathcal{H}}

\usepackage{fancyhdr}
\usepackage{fancyhdr}

\usepackage{fancyhdr}

\title[Sharp Dynamical Isoperimetric Principle For Mean Curvature Flow]{Sharp dynamical isoperimetric principle for mean curvature flow}

\dedicatory{\normalfont\rmfamily IN MEMORY OF PROFESSOR TOM ILMANEN} 

\author[Beomjun Choi, Wenkui Du, Seung Chul Park, Junseo Youn]{Beomjun Choi}
\author[Beomjun Choi, Wenkui Du, Seung Chul Park, Junseo Youn]{Wenkui Du}
\author[Beomjun Choi, Wenkui Du, Seung Chul Park, Junseo Youn]{Seung Chul Park}
\author[Beomjun Choi, Wenkui Du, Seung Chul Park, Junseo Youn]{Junseo Youn}

\begin{document}

\begin{abstract}
In this paper, we establish a sharp dynamical isoperimetric principle for weak mean curvature flow. We prove that, among weak mean curvature flows with fixed area of initial smooth closed hypersurfaces, the standard smoothly shrinking spherical mean curvature flow uniquely maximizes the extinction time. More precisely,  for the level set flow $K_t$ starting from the boundary of a smooth bounded domain $\Omega\subset\mathbb R^{n+1}$, as well as for the integral Brakke flow $\{\mu_t\}$ with initial Radon measure $\mu_0=\mathcal H^n\llcorner\partial\Omega$, we establish  the corresponding optimal extinction time estimates
\begin{equation*}
        T_{\rm ext}(\Omega),\, T^B_{\rm ext}(\Omega)
\leq
\frac{1}{2n}
\left(\frac{P( \Omega)}{|{\mathbb{S}^n}|}\right)^{\frac{2}{n}},
\end{equation*}
where $P(\Omega)$ is the perimeter of $\Omega$ representing the area of $\partial \Omega$, and the equality holds if and only if  $\Omega$ is a round ball and the flow is the standard multiplicity-one   smoothly self-shrinking round sphere. In particular, we obtain the sharp $L^p$-estimates for the arrival time function of a smooth bounded mean convex domain. In addition, we also establish the  sharp isoperimetric inequality for the parabolic measure of  the space-time track filling $X$ of outward minimizing level set  flow starting from  the boundary of a smooth bounded domain $\Omega\subset \mathbb R^{n+1}$:
\begin{equation*}
\mathcal H_{\mathrm{par}}^{n+2}(X)
\leq
\frac{\pi}
{2n(n+2)^2|{\mathbb{S}^n}|^{{\frac{2}{n}}}}
P(\Omega)^{\frac{n+2}{n}},
\end{equation*}
where the equality holds if and only if  $\Omega$ is a round ball and the flow is standard  smoothly shrinking round sphere. 
\end{abstract}
\maketitle

\tableofcontents

\section{Introduction}\label{intro}
Mean curvature flow is one of the fundamental geometric evolution equations, describing the deformation of a closed hypersurface in the direction of its mean curvature vector. It arises naturally as the negative gradient flow of the area functional
and has played a central role in geometric analysis, both as a mechanism for
deforming submanifolds toward canonical configurations and as a model for
the formation of geometric singularities. Mean curvature flow and  its variants have been used to study geometric inequalities. For instance,  Topping \cite{Topping1998}  applied curve shortening flow to obtain an optimal
isoperimetric inequality on surfaces. Schulze \cite{schulze2008nonlinear,schulze2020optimal} employed mean curvature flow and its nonlinear variants to establish sharp isoperimetric inequalities in several settings, including optimal isoperimetric inequalities for surfaces in Cartan--Hadamard manifolds. In another direction, Guan and Li \cite{GuanLi2009} employed inverse curvature flows to establish Alexandrov--Fenchel type inequalities, including a Minkowski-type inequality for star-shaped mean convex hypersurfaces. Huisken also used the weak inverse mean curvature flow approach developed in his joint work with Ilmanen \cite{HuiskenIlmanen2001} to obtain the Minkowski-type inequality for more general outward minimizing hypersurfaces. Moreover, 
Huisken and Ilmanen \cite{HuiskenIlmanen2001} applied the weak inverse mean curvature flow and used its monotonicity properties to prove the Riemannian Penrose inequality for asymptotically flat three-manifolds.
Bray \cite{bray2001proof} and Bi-Zhu \cite{BZ26Penrose} used conformal flow to prove general Riemannian Penrose inequality.

On the other hand,  the connection between mean curvature flow and isoperimetric inequalities has also been studied from the viewpoint of extinction time and filling volume estimates.  The classical isoperimetric inequality compares the area and enclosed volume of a hypersurface at a fixed time, whereas an extinction time estimate relates the geometry of the initial hypersurface to the lifespan of the mean curvature flow evolution it generates. For example, the lifespan of the curve shortening flow of simple closed curve in $\mathbb{R}^2$ is completely determined by the enclosed area $A$ of initial simple closed curve, so, by the planar isoperimetric inequality, among  curve shortening flows with fixed length $L$ of initial closed curves, the shrinking circle uniquely maximizes the extinction time $\frac{L^2}{8\pi^2}$. {Moreover, from the viewpoint of filling-volume estimates, Hershkovits \cite{hershkovits2017isoperimetric} explored parabolic version isoperimetric inequality for the mean curvature flow: the parabolic volume of space-time track of flow is estimated in terms of the area of initial submanifold viewed as the boundary of space-time track filling.} One of the crucial ingredients of Hershkovits's proof is an  upper bound estimate of the extinction time \cite[Lemma 1.12]{hershkovits2017isoperimetric} from Huisken's monotonicity formula. Hence, from the viewpoint of geometric inequalities,  extinction time estimates for mean curvature flow can be regarded as a parabolic analogue of isoperimetric
inequality, and the extinction time estimates and parabolic version isoperimetric inequality provide  dynamical counterparts of the classical isoperimetric principle.

Now, we recall the previously established results about the extinction time estimates besides the  related  extinction time estimates results of Hershkovits \cite[Theorem A, Lemma 1.12]{hershkovits2017isoperimetric} for enhanced motions in arbitrary codimension of mean curvature flow in Euclidean space. A sharp lower bound for extinction time of mean curvature flow of hypersurfaces in Euclidean space has been obtained in the sense of level set flow \cite{GigaYamauchi1993}  and the  canonical multi-phase Brakke flow constructed in \cite{StuvardTonegawa2024}, namely,
\begin{equation}
    T_{\textrm{ext}}\geq 2\left(\frac{|\Omega|}{P(\Omega)}\right)^2,
\end{equation}
where $P(\Omega)$ is the perimeter of $\Omega$. 

Then, for the upper bound estimates for extinction time of mean curvature flow of hypersurfaces in Euclidean space, it is natural to ask if the extinction time of  mean curvature flow  can be bounded above in terms of the geometric information such as the area of the initial hypersurface\footnote{Colding-Minicozzi 
\cite{ColdingMinicozzi2008} also obtained sharp bounds
for the extinction time of mean curvature flow in terms of the min-max
width. 
For  convex closed mean curvature flow, they proved $T_{\mathrm{ext}}\leq \frac{W(0)}{4\pi}=\frac{1}{2}\left(\frac{W_1(0)}{|\mathbb{S}^1|}\right)^2$ and $T_{\mathrm{ext}}\leq \frac{1}{2\times 2}\frac{W_2(0)}{|\mathbb{S}^2|}$. 
They also studied the extinction time of Ricci flow in terms of the min-max
width  (see \cite{colding2005estimates, colding2008width}).}. 
%However, depending on the initial hypersurface, smooth mean curvature flow can attain positive volume at their first singular time.
Evans and Spruck \cite{evans1992motion} obtained the following nonsharp upper bound on extinction time  for level set flow 
\begin{equation}\label{sharpextinctionintro}
    T_{\textrm{ext}} \leq C_n\left(\frac{P(\Omega)}{|\mathbb{S}^n|}\right)^{{\frac{2}{n}}}.
\end{equation}
Under the assumptions of mean convexity and smoothness of every time slice of the level set flow prior to extinction, Brandolini, Cicalese, Nitsch, and Trombetti \cite{brandolini2007sharp} used a symmetrization argument for  the arrival time function to show that the sharp extinction time estimate \eqref{sharpextinctionintro} with
\begin{equation}
    C_n=\frac{1}{2n}.
\end{equation}
For  general mean curvature flow of closed hypersurfaces,  singularities may develop  before extinction time, and the singular set can have a highly complicated structure. Consequently, the classical evolution equations used in the smooth setting are no longer directly available across singular times, and the argument of \cite{brandolini2007sharp} does not immediately extend to such flows. This naturally raises the question of whether the above same sharp extinction time estimate continues to hold in the presence of general complicated singularities, in particular for weak notions of mean curvature flow such as level set flow or Brakke flow. 

\subsection{Main results} 
Now, we state our main results on optimal extinction time estimates and parabolic isoperimetric estimates for weak mean curvature flow such as level set flow (the unique maximal closed set-theoretic weak mean curvature flow satisfying the avoidance principle with respect to smooth mean curvature flows) and Brakke flow (a measure-theoretic weak mean curvature flow satisfying Brakke’s inequality, see \cite{Ilm94, SchulzeBrakkeFlows} or Definition \ref{def-Brakke flow}).  

\begin{theorem}[Optimal extinction time estimates for weak mean curvature flow]\label{Optimal extinction time estimates for weak mean curvature flow}
Let $\Omega\subset\R^{n+1}$ be a bounded domain with smooth boundary.
Let $\{K_t\}_{t\geq 0}$ be the level set flow starting from $\Omega$ and  $\{\mu_t\}_{t\geq 0}$ be an integral Brakke flow starting from the associated Radon measure $\mu_0=\mathcal{H}^n\llcorner\partial\Omega$. Then, for extinction times $T_{\mathrm{ext}}(\Omega)
:=
\inf\{t\geq0:K_t=\varnothing\}$ and $T^{B}_{\mathrm{ext}}(\Omega)
:=
\inf\{t\geq0:\mu_t\equiv0\},$
we have
\begin{equation}\label{C1}
           T_{\rm ext}
(\Omega)\leq
\frac{1}{2n}
\left(\frac{P( \Omega)}{|{\mathbb{S}^n}|}\right)^{\frac{2}{n}}
\end{equation}
and
\begin{equation}\label{C2}
           T^B_{\rm ext}
(\Omega)\leq
\frac{1}{2n}
\left(\frac{P( \Omega)}{|{\mathbb{S}^n}|}\right)^{\frac{2}{n}}
\end{equation}
respectively. In addition,
the equality in \eqref{C1} holds  if and only if $\Omega$ is a round ball, and 
the equality in \eqref{C2} holds if and only if $\Omega$ is a round ball and the Brakke flow $\{\mu_t\}$ is the multiplicity-one standard  smoothly shrinking round sphere Brakke flow. 
\end{theorem}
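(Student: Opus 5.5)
Since $T^B_{\rm ext}$ and $T_{\rm ext}$ are comparable (the support of any integral Brakke flow from $\mathcal H^n\llcorner\partial\Omega$ lies in the level set flow by the avoidance principle, so $T^B_{\rm ext}\le T_{\rm ext}$), it suffices to prove \eqref{C1}. We then treat the equality cases separately for both flows.

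The plan for the bound is to bypass the arrival-time symmetrization of \cite{brandolini2007sharp}, which needs smoothness. Instead we use Huisken's monotonicity formula in the form employed by Hershkovits \cite[Lemma 1.12]{hershkovits2017isoperimetric}, but sharpened to the spherical constant.

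Fix a point $x_0$ that lies in $K_t$ for $t$ close to $T:=T_{\rm ext}$. Such a point exists by compactness: take a limit point of $K_{t_i}$ as $t_i\uparrow T$. Choose a Brakke flow $\mu_t$, for instance Ilmanen's elliptic-regularization flow, whose support equals $K_t$ (or a fattening-free approximation of it). Then $(x_0,T)$ is a point of the support where a tangent flow exists. Huisken's monotonicity gives
\[
\Theta(x_0,T)\le \int \rho_{x_0,T}(x,0)\,d\mathcal H^n\llcorner\partial\Omega
=\int_{\partial\Omega}(4\pi T)^{-n/2}e^{-|x-x_0|^2/4T}\,d\mathcal H^n
\le (4\pi T)^{-n/2}P(\Omega).
\]
Hershkovits uses only $\Theta\ge1$, and that loses the constant. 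The key improvement is to use the extinction structure. At the last time the flow is nonempty, every tangent flow at $(x_0,T)$ is a compact self-shrinker, because the flow vanishes immediately after $T$. It therefore has entropy at least that of the round sphere $\mathbb S^n$, where we use that compact shrinkers have entropy $\ge\lambda(\mathbb S^n)$ (Colding--Ilmanen--Minicozzi--White). More directly, the Gaussian density of a compact shrinker at scale $1$ is at least $\lambda(\mathbb S^n)=(4\pi)^{-n/2}|\mathbb S^n|\,(2n)^{n/2}e^{-n/2}$.

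This alone gives a bound with a factor $e^{-n/2}$ mismatch. So rather than evaluating the heat kernel centered at time $T$, we will use the monotone quantity at the optimal scale. Concretely, we apply monotonicity between time $0$ and a time $t<T$, centered at $(x_0,T)$, and let $t\uparrow T$. The right object is the compact limit shrinker $\Sigma$ itself, for which $\int_\Sigma\rho$ is attained. For the sphere of radius $\sqrt{2n}$, $\int\rho=\lambda(\mathbb S^n)$, and the inequality $(4\pi T)^{-n/2}P(\Omega)e^{-d^2/4T}\ge\lambda(\mathbb S^n)$ is not sharp unless one drops the Gaussian weight. The expected fix is an isoperimetric-type comparison for the entire flow: we replace the pointwise monotonicity by the fact that $t\mapsto \mu_t(\mathbb R^{n+1})$ is nonincreasing with $\frac{d}{dt}\mu_t\le-\int H^2\,d\mu_t$. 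We combine this with the sharp Minkowski/Michael--Simon type inequality $\int H^2\ge n^2|\mathbb S^n|^{2/n}\mu_t^{(n-2)/n}$, valid for integral varifolds with $L^2$ mean curvature, in the sharp form of Agostiniani--Fogagnolo--Mazzieri, extended to Brakke flow slices. Integrating the resulting ODE inequality
\[
\frac{d}{dt}\mu_t^{2/n}\le -2n|\mathbb S^n|^{2/n}
\]
yields exactly $T^B_{\rm ext}\le \frac1{2n}(P/|\mathbb S^n|)^{2/n}$. The inequality $T_{\rm ext}\le T^B_{\rm ext}$ then follows for a Brakke flow that does not vanish before the level set flow; Ilmanen's elliptic regularization or the nonfattening genericity by perturbation gives this.

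The main obstacle is the sharp Willmore-type inequality $\int H^2\,d\mu\ge n^2|\mathbb S^n|^{2/n}\mu(\mathbb R^{n+1})^{(n-2)/n}$ for a.e.-time Brakke slices, which are integral varifolds rather than smooth hypersurfaces. I would first try to prove it by approximation through the smooth case (via the Alexandrov-type/Willmore inequality) plus lower semicontinuity. For the equality cases, equality forces equality in the Willmore-type inequality for a.e. $t$, which makes each slice a multiplicity-one round sphere. Brakke regularity then gives the shrinking sphere and hence $\Omega$ a ball.
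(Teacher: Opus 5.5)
There is a genuine gap, and it is the step you flag yourself as the main obstacle. For $n\ge 3$, the inequality $\int|H|^2\,d\mu\ge n^2|\mathbb{S}^n|^{2/n}\mu(\mathbb{R}^{n+1})^{(n-2)/n}$ is not an available result for general closed hypersurfaces, let alone for arbitrary Brakke slices. It is also not what Agostiniani--Fogagnolo--Mazzieri prove.

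\begin{itemize}
\item \emph{What AFM actually give.} Their extended Minkowski inequality is $\int_{\partial E}|H|\ge n|\mathbb{S}^n|^{1/n}P(E^*)^{(n-1)/n}$. The right side uses the perimeter of the strictly outward-minimizing hull $E^*$, not $\mathcal{H}^n(\partial E)$. Combined with Cauchy--Schwarz, it gives only $\int H^2\ge n^2|\mathbb{S}^n|^{2/n}P(E^*)^{2(n-1)/n}/\mathcal{H}^n(\partial E)$. That yields your ODE only when $P(E^*)\approx\mathcal{H}^n(\partial E)$.
\item \emph{Willmore does not help.} The sharp inequality $\int|H|^n\ge n^n|\mathbb{S}^n|$ transfers by H\"older only to exponents $p\ge n$. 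For $n=2$ your inequality is Willmore's $\int|H|^2\ge 16\pi$, valid for integral varifolds via Simon's monotonicity, so in that dimension the Brakke-flow ODE does go through.
\item \emph{Your fallback fails.} Smooth approximation plus lower semicontinuity cannot supply the inequality. Lower semicontinuity of $\int|H|^2$ bounds the limit from above, which is the wrong direction for inheriting a lower bound.
\item \emph{Knock-on gaps.} Your equality argument presupposes a rigidity statement for this varifold inequality, so it has nothing to rest on. The reverse comparison $T_{\rm ext}\le T^B_{\rm ext}$ for some Brakke flow is also unproved: Brakke flows may lose mass abruptly, and level set flows from smooth data may fatten.
\end{itemize}

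The missing idea is to force the slices to be almost outward minimizing, so that $P(E^*)$ and the area agree up to a small error. The paper proceeds as follows.

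\begin{itemize}
\item \emph{Reduction to the hull.} It replaces $\Omega$ by $\Omega^*$. By avoidance, $T_{\rm ext}(\Omega)$ is bounded by the extinction time of any outer domain, and $P(\Omega^*)\le P(\Omega)$.
\item \emph{Outer approximation.} It approximates $\Omega^*$ from outside by smooth, strictly mean convex $\Omega_i$ with $P(\Omega_i)\le P(\Omega^*)+1/i$.
\item \emph{Slices stay almost outward minimizing.} Submodularity of perimeter shows that the smooth level sets $\Sigma^\eps_s$ of the elliptic regularization have hulls with $P(E^*)\ge|\Sigma^\eps_s|-\delta^\eps_s-1/i$, where $\int\delta^\eps_s\,ds=O(\eps)$.
\item \emph{Passing to the weak flow.} AFM is applied to these smooth level sets. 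The resulting bound is on $\int H$, not $\int H^2$. It passes to a.e.\ slice of the weak flow through the linear identity $\int\psi(t)\int_{\{u=t\}}H\,d\mathcal{H}^n\,dt=\int\psi(u)\,dx$ and the translator pinching estimate $|A|\le CH$. This sidesteps the semicontinuity problem.
\item \emph{The ODE.} Cauchy--Schwarz and the no-mass-drop identity then give the differential inequality. Slices of small area are handled by the non-sharp Michael--Simon inequality.
\item \emph{Brakke flows.} Only $T^B_{\rm ext}\le T_{\rm ext}$, from support inclusion, is needed.
\item \emph{Equality case.} It does not come from rigidity of a functional inequality. The paper shows $\int(H_i-h)^2\to0$ near $t=0$, where $h$ is the mean curvature of the shrinking sphere. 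Passing to the limit with Reshetnyak's theorem gives constant weak mean curvature on $\partial^*\Omega^*$. It then applies Delgadino--Maggi, and uses $P(\Omega^*)=P(\Omega)$ with a count of boundary components.
\end{itemize}
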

\begin{remark}
Indeed,  Theorem \ref{Optimal extinction time estimates for weak mean curvature flow} holds when $\Omega$ is a bounded Lipschitz domain. The readers can refer to Remark \ref{remarkonsmooth issue} after the proof of our main results for more explanations. 
\end{remark}
\begin{remark}
    In particular, above Theorem \ref{Optimal extinction time estimates for weak mean curvature flow} admits a dynamical isoperimetric principle interpretation: among weak mean curvature flows starting from smooth closed hypersurfaces of fixed area, the standard smoothly shrinking spherical mean curvature flow maximizes the lifespan. This can be easily understood in the planar curve shortening flow ($n=1$ case) since planar curve shortening flow of simple closed curve remains smooth until it extincts and its lifespan is completely determined by the enclosed area  of initial simple closed curve. For the situation when $\partial \Omega$ has multiple components, one can still use the differential formula of length and  Fenchel's total absolute curvature inequality to get a direct proof of Theorem \ref{Optimal extinction time estimates for weak mean curvature flow} when $n=1$.  
    On the other hand, Theorem \ref{Optimal extinction time estimates for weak mean curvature flow}  can be interpreted as optimal  extinction time estimates in terms of the first $n$-dimensional min-max width of smooth $n$-dimensional closed hypersurface $\partial \Omega$ (see problem (5) in Remark \ref{problems}  for more details). 
\end{remark}
\begin{remark}
In addition, we also have the following version of the optimal extinction
time estimate (see also Proposition \ref{thm:LSF-Brakke extinction optimal}) used in the proof of Theorem \ref{Optimal extinction time estimates for weak mean curvature flow}: 
\begin{equation}\label{eq:general-extinction}
T_{\mathrm{ext}}(\Omega)
\leq
\frac{1}{2n}
\left(
\frac{P(\Omega^*)}{|{\mathbb{S}^n}|}
\right)^{\frac{2}{n}}
\end{equation}
and
\begin{equation}\label{eq:general-brakke-extinction}
T^{B}_{\mathrm{ext}}(\Omega)
\leq
\frac{1}{2n}
\left(
\frac{P(\Omega^*)}{|{\mathbb{S}^n}|}
\right)^{{\frac{2}{n}}}
\end{equation}
hold, where $P(\Omega^*)$ is the perimeter of the outward minimizing hull $\Omega^*$ of $\Omega$.
In addition, the equality in \eqref{eq:general-extinction} holds,
if and only if $\Omega^*$ is a round ball. In particular, if $\Omega$ has only one smooth boundary component, then equality in \eqref{eq:general-extinction}
if and only if $\Omega=\Omega^*$ is a round ball. Moreover, if $\Omega$ has only one smooth boundary component, then equality in \eqref{eq:general-brakke-extinction} holds, if and only if $\Omega=\Omega^*$ is a round ball and the Brakke flow $\{\mu_t\}$ is the multiplicity-one standard  smoothly shrinking round sphere Brakke flow.
\end{remark}
For a bounded smooth mean convex domain, the extinction time  can be interpreted as  $L^{\infty}$-norm of arrival time function. In particular, we have the following result.
\begin{theorem}[$L^p$-estimates of arrival time function]\label{lpestimates arrival time}
   Let $\Omega\subset \mathbb{R}^{n+1}$ be  a bounded smooth mean convex domain and $u(x):=\sup\{t\geq 0:x\in K_t\}$ be the arrival time function such that $u=0$ on $\partial \Omega$. Then, we have
   \begin{equation}\label{eqlinfty}
\|u\|_{L^\infty(\Omega)}
\le
\frac{1}{2n}
\left(
\frac{P(\Omega)}{|\mathbb S^n|}
\right)^{\frac{2}{n}}
    \end{equation}
and
\begin{equation}\label{eqlp}
\|u\|_{L^p(\Omega)}
\leq
\frac{1}{2n}
\left[
\frac{1}{2}
\mathrm{B}\!\left(\frac{n+1}{2},\,p+1\right)
\right]^{\frac{1}{p}}
|\mathbb{S}^n|^{-\frac{2p+1}{np}}
P(\Omega)^{\frac{n+1+2p}{np}},\qquad p>0, 
\end{equation}
where 
\begin{equation}\label{eqn:beta}
\mathrm{B}(a,b):=\int_0^1 s^{a-1}(1-s)^{b-1}\,ds,
\qquad a,b>0
\end{equation}
is the standard {Beta} function. In particular, the equality in \eqref{eqlinfty}, \eqref{eqlp} holds if and only if $\Omega$ is a  round ball.
\end{theorem}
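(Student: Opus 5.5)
For a mean convex domain the level set flow is monotone: $K_t$ decreases in $t$, and the arrival time $u$ is well defined and continuous. Its superlevel sets are $\{u>t\}=\mathrm{int}\,K_t=:\Omega_t$ and $\{u\ge t\}=K_t$, so $\|u\|_{L^\infty(\Omega)}=T_{\rm ext}(\Omega)$. Hence \eqref{eqlinfty} is exactly \eqref{C1} of Theorem \ref{Optimal extinction time estimates for weak mean curvature flow}, and the equality case is inherited from it.

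For \eqref{eqlp} I will use the layer-cake formula
\[
\|u\|_{L^p}^p=\int_0^{T_{\rm ext}} p\,t^{p-1}|\Omega_t|\,dt .
\]
I then bound $|\Omega_t|$ by two ingredients:
\begin{enumerate}
\item \emph{Time shift.} The flow restarted from $K_t$ is the level set flow of $\Omega_t$, so $T_{\rm ext}(\Omega_t)=T_{\rm ext}(\Omega)-t$.
\item \emph{Perimeter--volume.} The classical isoperimetric inequality gives $|\Omega_t|\le |B_1|\,(P(\Omega_t)/|\mathbb S^n|)^{(n+1)/n}$.
\end{enumerate}
The crude idea is to apply Theorem \ref{Optimal extinction time estimates for weak mean curvature flow} to $\Omega_t$, which yields $T_{\rm ext}-t\le \frac1{2n}(P(\Omega_t)/|\mathbb S^n|)^{2/n}$. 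This is a lower bound on $P(\Omega_t)$, which points the wrong way. So I will instead use mean convexity through monotonicity. Since the flow is outward minimizing and $\Omega_t\subset\Omega$, we have $P(\Omega_t)\le P(\Omega)$, but this alone does not produce the Beta function.

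The sharp route is to compare $u$ with the ball. For the round ball $B_R$ with $R^2=2nT$ and $T=\frac1{2n}(P(\Omega)/|\mathbb S^n|)^{2/n}$, the arrival time is $u_B(x)=(R^2-|x|^2)/(2n)$. A direct computation gives
\[
\int_{B_R}u_B^p
=\frac{|\mathbb S^n|}{(2n)^p}\int_0^R (R^2-r^2)^p r^n\,dr
=\frac{|\mathbb S^n|R^{2p+n+1}}{2(2n)^p}\,\mathrm B\!\left(\tfrac{n+1}2,p+1\right),
\]
which matches \eqref{eqlp} after substituting $R^{n}=P(\Omega)/|\mathbb S^n|$. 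It therefore suffices to show $|\Omega_t|\le|\{u_B>t\}|$ for every $t$, that is, $|\Omega_t|\le |B_1|\,(2n(T-t))^{(n+1)/2}$.

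To get this I will prove the per-slice bound $P(\Omega_t)\le |\mathbb S^n|(2n(T-t))^{n/2}$ and combine it with the isoperimetric inequality in item 2. One idea is to run the argument behind Proposition \ref{thm:LSF-Brakke extinction optimal} along the flow: the Brakke/area inequality gives $\frac{d}{dt}P(\Omega_t)\le -\int H^2$. A Minkowski-type bound, namely $\int H^2\ge c\,P^{(n-2)/n}$ for outward minimizing sets (the Huisken-type inequality already underlying the paper's extinction estimate), then integrates to $P(\Omega_t)^{2/n}\le P(\Omega)^{2/n}-2n|\mathbb S^n|^{2/n}t$. This is exactly the bound I need.

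The main obstacle is justifying this differential inequality across singular times for a weak mean convex flow. Here I would rely on the Brakke flow inequality for the multiplicity-one flow associated with a mean convex level set flow, and on the outward minimizing property of every $\Omega_t$, so that the Minkowski/Willmore-type bound holds in a weak sense.

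For the equality cases, equality in \eqref{eqlp} forces equality in the isoperimetric inequality for almost every $t$. In particular, as $t\to0$, $\Omega$ itself must be a ball, by continuity of volume and perimeter along the flow together with the sharp case of the per-slice bound.
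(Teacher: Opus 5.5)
Your reduction is correct, and it is also the skeleton of the paper's proof. The layer-cake formula and the isoperimetric inequality reduce \eqref{eqlp} to $|\{u>t\}|\le |B_1|\,(2n(T-t))_+^{(n+1)/2}$ with $T=\frac{1}{2n}(P(\Omega)/|\mathbb S^n|)^{2/n}$, and your Beta-function computation is right.

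The gap is in how you obtain that bound, namely through the per-slice estimate $P(\Omega_t)^{2/n}\le P(\Omega)^{2/n}-2n|\mathbb S^n|^{2/n}t$. You justify it by asserting that the mean convex flow is outward minimizing, so that the sharp bound $\int H^2\ge n^2|\mathbb S^n|^{2/n}P^{(n-2)/n}$ applies to every slice. Mean convexity does not imply outward minimality. Already in $\mathbb R^3$, a solid torus of revolution whose core radius is slightly larger than twice its tube radius is mean convex. Yet replacing the inner half of its boundary by the two flat discs capping the hole lowers the area. For slices that are not outward minimizing, the only lower bound the paper has is the Michael--Simon inequality (Lemma \ref{lem:MS-inequality}). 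Its constant $n|\mathbb B^n|^{1/n}$ is strictly smaller than $n|\mathbb S^n|^{1/n}$, so sharpness is lost. For $n=2$, Willmore's $\int H^2\ge16\pi$ (and Fenchel for $n=1$) would give your per-slice bound without outward minimality. For $n\ge3$, however, the sharp constant is only available through the Minkowski inequality for outward minimizing sets.

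The missing idea is to bound the \emph{volume} of $K_t$ by that of the hull flow, instead of going through $P(K_t)$. Let $K_t^*$ be the level set flow of the outward minimizing hull $\Omega^*$. Avoidance gives $K_t\subset K_t^*$, hence
$|K_t|\le|K_t^*|\le|B_1|\bigl(P(K_t^*)/|\mathbb S^n|\bigr)^{(n+1)/n}$, and moreover $P(\Omega^*)\le P(\Omega)$.

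Outward minimality is preserved by level set flow, so the sharp decay is legitimate for $K_t^*$. However, $\partial\Omega^*$ is only $C^{1,1}$ and may have a singular free part, so Proposition \ref{thm:ae-minkowski} cannot be applied to it directly. The paper proceeds in three steps:
\begin{enumerate}
\item It takes the smooth, strictly mean convex, $i^{-1}$-outward-minimizing domains $\Omega_i\Supset\Omega^*$ of Lemma \ref{lem:hull-approximation}.
\item Proposition \ref{prop:approximate-extinction} (relaxed Minkowski while $A_i>i^{-1/2}$, Michael--Simon for the short tail) gives $A_i(t)\le[P(\Omega_i)^{2/n}-c_it]_+^{n/2}+i^{-1/2}$.
\item Outward minimality of $K_t^*$ together with $K_t^*\subset K_{i,t}$ gives $P(K_t^*)\le A_i(t)$, and one lets $i\to\infty$.
\end{enumerate}

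Once $|K_t|\le|B_1|(2n(T-t))_+^{(n+1)/2}$ is established this way for every $t$, your equality argument is valid. Equality in the layer-cake chain forces equality for a.e.\ $t$, and letting $t\downarrow0$ gives equality in the isoperimetric inequality for $\Omega$. This is in fact more direct than the paper's route through the rigidity of the extinction time.
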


By the fact that mean curvature flow is negative gradient flow of area functional and \cite{magni2015variational}, it selects a particularly canonical  and efficient space-time filling  of initial closed smooth hypersurface in $\mathbb{R}^{n+1}$ by sweeping out a track in space–time $\mathbb{R}^{1, n+1}$, and the mean curvature flow space-time track filling has the initial closed smooth hypersurface as its boundary. In the mean curvature flow theory, one should use parabolic Hausdorff measure defined below to measure the size of the mean curvature flow space-time track filling. 
\begin{definition}[cf. \cite{White1997Stratification, hershkovits2017isoperimetric}]
The {parabolic metric} on $\mathbb{R}^{1,n+1}$ is defined to be
\begin{equation}
    d_{\mathrm{par}}\bigl((t,x),(s,y)\bigr)
    =
    \max\left\{
        \sqrt{|t-s|},
        |x-y|
    \right\}.
\end{equation}
The {$(n+2)$-dimensional} {parabolic Hausdorff measure} $\mathcal{H}^{n+2}_{\mathrm{par}}$ induced by $d_{\mathrm{par}}$ is given by
\begin{equation}
\mathcal H_{\mathrm{par}}^{n+2}(A)
:=
\lim_{\delta\downarrow0}
\inf\left\{
\frac{\omega_{n+2}}{2^{n+2}}
\sum_i
\big(\operatorname{diam}_{\mathrm{par}} U_i\big)^{n+2}
:
A\subset\bigcup_i U_i,\ 
\operatorname{diam}_{\mathrm{par}} U_i<\delta
\right\},
\end{equation}
where $\omega_{n+2}$ denotes the volume of the Euclidean unit ball in
$\mathbb R^{n+2}$.
\end{definition}

Motivated by the classical isoperimetric inequality, it is natural to consider a parabolic analogue of the isoperimetric problem as discussed in \cite[Theorem C]{hershkovits2017isoperimetric}:
{Given the area of the initial closed smooth hypersurface 
$M_0=\partial \Omega\subset \mathbb{R}^{n+1}$ as boundary of level set flow space-time track filling, can one establish an optimal, 
parabolic-scaling-invariant isoperimetric type bound on the parabolic 
Hausdorff measure of the entire level set flow space-time track 
filling in terms of 
$P(\Omega)^{\frac{n+2}{n}}$?}

By the parabolic co-area formula in \cite[Theorem D]{hershkovits2017isoperimetric} and \cite[Example 3.5]{hershkovits2017isoperimetric}, the parabolic Hausdorff measure of entire
mean curvature flow space–time track filling under reasonable condition can be written as constant multiple of time integral of perimeter of level set flow time slices $K_t$ over $[0, T_{\textrm{ext}}]$, where $K_0=\Omega$ has smooth boundary.
Hence, the above problem  under reasonable condition can be interpreted as
establishing an optimal estimates of the following parabolic scaling invariant level set flow time slice area integrated form inequality:
\begin{equation}
\int_0^{T_{\mathrm{ext}}}P(K_t)dt
\leq
C_n
P(\Omega)^{\frac{n+2}{n}},
\end{equation}
where $0<C_n<\infty$ is the sharp constant to be determined.

Concerning this question, in our following result, we obtain the optimal parabolic isoperimetric estimates for the space-time
track filling of outward minimizing mean curvature flow.
Since outward minimizing property implies mean convexity and therefore
non-fattening, there exists a canonical unit-density matching motion
corresponding to the level set flow \cite[Theorem 11.4]{Ilm94}.

\begin{theorem}[Optimal parabolic isoperimetric estimates for outward-minimizing level set flow]\label{sharp parabolic estimates}
Let $\Omega\subset\mathbb{R}^{n+1}$ be a bounded
domain with smooth outward-minimizing boundary, and let $\{K_t\}_{t\geq 0}$ be an outward minimizing level-set flow starting from
$K_0={\Omega}$.
Then, there is a unique canonical unit-density matching motion $(T,\{\mu_t\}_{t\geq0})$ 
associated with this level-set flow, such that  for  the sliced current $T_t$ and its associated mass Radon measure $\mu_{T_t}$ satisfy
\begin{equation}
T_t=\partial[K_t],
\qquad
\mu_{T_t}
=\mu_t=\mathcal H^n\llcorner\partial^*K_t\quad\,\,\textrm{for}\,\, \mathrm{a.e}.\,\,t.
\end{equation}
In particular, for the
rectifiable set of level set flow space-time track $X\subset\R^{1, n+1}$ associated to the above current  $T$, we have the following optimal parabolic isoperimetric estimate: 
\begin{equation}
\label{eq:sharp-parabolic-matching}
\mathcal H_{\mathrm{par}}^{n+2}(X)
\leq
\frac{\pi}
{2n(n+2)^2|{\mathbb{S}^n}|^{{\frac{2}{n}}}}
P(\Omega)^{\frac{n+2}{n}} .
\end{equation}
The equality holds if and only if $\Omega$ is a round ball and the flow is standard  smoothly shrinking round sphere.
\end{theorem}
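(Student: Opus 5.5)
The plan is to convert the parabolic Hausdorff measure of the track into a time integral of perimeters, and then bound each $P(K_t)$ by the perimeter of the comparison shrinking sphere through a differential inequality. This differential inequality comes from the Minkowski inequality for outward minimizing sets.

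\emph{Step 1 (structure and co-area).} Since $\partial\Omega$ is outward minimizing, it is mean convex, so the level set flow does not fatten. By \cite[Theorem 11.4]{Ilm94} there is a canonical unit-density matching motion $(T,\{\mu_t\})$ with $T_t=\partial[K_t]$ and $\mu_t=\mathcal H^n\llcorner\partial^*K_t$ for a.e.\ $t$. I will also use that the outward minimizing property is preserved along the flow: each $K_t$ is outward minimizing, so $K_t^*=K_t$. Applying the parabolic co-area formula \cite[Theorem D, Example 3.5]{hershkovits2017isoperimetric} to the rectifiable track $X$ gives
\[
\mathcal H^{n+2}_{\mathrm{par}}(X)=\frac{\pi}{2(n+2)}\int_0^{T_{\mathrm{ext}}}P(K_t)\,dt .
\]
I will calibrate the constant $\frac{\pi}{2(n+2)}$ against the shrinking sphere. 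With it, the claimed bound \eqref{eq:sharp-parabolic-matching} becomes equivalent to
\[
\int_0^{T_{\mathrm{ext}}}P(K_t)\,dt\le \frac{1}{n(n+2)|\mathbb S^n|^{2/n}}P(\Omega)^{\frac{n+2}{n}},
\]
and the right-hand side is exactly the value of this integral for the round sphere of area $P(\Omega)$.

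\emph{Step 2 (differential inequality).} Let $f(t)=P(K_t)$. Brakke's inequality for the unit-density flow gives, in the sense of distributions (upper derivative), $f'\le -\int H^2\,d\mu_t$. By Cauchy--Schwarz, $\int H^2\ge (\int H)^2/f$. The Minkowski inequality for outward minimizing sets (Huisken, via weak inverse mean curvature flow \cite{HuiskenIlmanen2001}) gives $\int H\ge n|\mathbb S^n|^{1/n}f^{\frac{n-1}{n}}$. Combining these yields
\[
f'\le -n^2|\mathbb S^n|^{2/n}f^{\frac{n-2}{n}},\qquad\text{equivalently}\qquad \bigl(f^{2/n}\bigr)'\le -2n|\mathbb S^n|^{2/n}.
\]
Integrating gives $f(t)\le |\mathbb S^n|\bigl((P(\Omega)/|\mathbb S^n|)^{2/n}-2nt\bigr)_+^{n/2}$, which is precisely the area profile of the comparison sphere. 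As a byproduct this recovers \eqref{C1} in this setting. Integrating $f$ in $t$ then gives the desired bound.

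\emph{Main obstacle.} The main obstacle is justifying Step 2 across singular times, where $K_t$ is only a finite-perimeter, outward minimizing set with weak mean curvature in $L^2$. I would first try to apply Minkowski at a.e.\ time through approximation. One option is elliptic regularization or smooth mean convex approximations of the flow, using that $H\ge0$ and that the inequality passes to the limit by lower semicontinuity of $\int H^2$ and convergence of $\int H$ and of areas. Alternatively, I would use the monotone $f$ together with the arrival-time co-area formula $\int H\,d\mu_t=-\tfrac{d}{dt}|K_t|$ in the mean convex setting.

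\emph{Equality case.} If equality holds in \eqref{eq:sharp-parabolic-matching}, then $f$ coincides with the sphere profile for a.e.\ $t$, so Minkowski is an equality at a.e.\ time near $0$. By its rigidity, $K_t$ is a round ball for such $t$. Letting $t\downarrow0$ and using continuity of area, together with the outward minimizing property, shows that $\Omega$ is a round ball, and the flow is then the standard shrinking sphere. The converse follows from the computation in Step 1.
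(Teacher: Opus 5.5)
Your Steps 1 and 2 follow the paper's architecture: Ilmanen's canonical matching motion, Hershkovits' parabolic co-area formula with constant $\frac{\pi}{2(n+2)}$, the chain $A'\le-\int H^2\le-A^{-1}(\int H)^2\le -n^2|\mathbb S^n|^{2/n}A^{\frac{n-2}{n}}$, and the ODE comparison. However, the step you label the main obstacle is where nearly all of the paper's work goes, and the limiting argument you sketch does not close as stated. Under elliptic regularization the approximating slices $\Sigma^\varepsilon_s=\{u^\varepsilon=s\}$ are not outward minimizing, so no Minkowski inequality holds for them a priori. Moreover, the curvature whose slice integrals converge (weakly in $s$) to $\int H\,d\mu_s$ is the translator curvature $H^\varepsilon=(\varepsilon^2+|Du^\varepsilon|^2)^{-1/2}$, not the slice curvature $H^\varepsilon_s$ to which a Minkowski inequality would apply.

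The paper closes this gap with two further ideas. First, it uses the calibration $X^\varepsilon=-Du^\varepsilon/\sqrt{\varepsilon^2+|Du^\varepsilon|^2}$, which satisfies $|X^\varepsilon|\le 1$ and $\operatorname{div}X^\varepsilon>0$. This shows $P(\{u^\varepsilon>s\})\le P(F)+\delta^\varepsilon_s$ whenever $\{u^\varepsilon>s\}\subset F\subset\Omega$, with $\int\delta^\varepsilon_s\,ds\le\varepsilon|\Omega|$. Combined with submodularity of perimeter and the outward minimality of $\Omega$, this lets one apply the extended Minkowski inequality of \cite{AFM22} to the hull of $\{u^\varepsilon>s\}$, giving the relaxed inequality of Lemma~\ref{lem:relaxed-minkowski}. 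Second, it uses the pinching $|A^\varepsilon|\le CH^\varepsilon$, obtained from the maximum principle for $|A|/H$ on translators. This supplies the domination needed in Lemma~\ref{strong slice conv} to show $\int\varphi(s)\int_{\Sigma^\varepsilon_s}(H^\varepsilon-|H^\varepsilon_s|)\,d\mathcal H^n\,ds\to 0$. The regularization also needs a strictly mean convex initial boundary, which the paper arranges by restarting at a small positive time. Lower semicontinuity of $\int H^2$ plays no role here. The identity $\int H\,d\mu_t=-\frac{d}{dt}|K_t|$ gives no Minkowski-type lower bound on its own.

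Your equality argument takes a different route from the paper's. The paper only extracts $T_{\mathrm{ext}}(\Omega)=T_*$ from equality and then invokes the rigidity part of Proposition~\ref{thm:LSF-Brakke extinction optimal}. That proof runs through the approximating flows, $L^2$-closeness of $H_i$ to $h(t)$, Reshetnyak continuity and Delgadino--Maggi. Since $\Omega$ itself is smooth here, a direct argument like yours is shorter and valid, but it must be anchored at times where the flow is classical:
\begin{enumerate}[label=(\alph*)]
\item Equality forces $A=S$ almost everywhere, hence everywhere on $[0,T_*]$, by monotonicity of $A$ and continuity of $S$.
\item On an interval $[0,\tau)$ where $\partial K_t$ is smooth, differentiating gives equality in Cauchy--Schwarz. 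So $H$ is constant on $\partial K_t$, already at $t=0$.
\item Alexandrov's theorem then gives finitely many spheres of equal radius.
\item The numerical identity $\int H=n|\mathbb S^n|^{1/n}A^{\frac{n-1}{n}}$ forces a single sphere.
\end{enumerate}
What you cannot do is invoke ``rigidity of Minkowski'' at a.e.\ time in general. At singular times the inequality is only obtained in a limit, and the inequality of \cite{AFM22} carries no rigidity statement; this is exactly why the paper takes the longer route. Likewise, continuity of area alone does not identify $\Omega$: work at $t=0$ directly, or use smooth convergence as $t\downarrow 0$.
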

\begin{remark}
Theorem \ref{sharp parabolic estimates} sharpens Hershkovits's parabolic isoperimetric inequality \cite[Theorem C]{hershkovits2017isoperimetric} for level set flows starting from bounded domains with smooth outward-minimizing boundary, yielding the optimal constant and characterizing the shrinking round sphere as the unique equality case.  Moreover, for level set flow starting from smooth bounded domain with outward minimizing boundary, using co-area formula and \cite[Theorem D, Example 3.5]{hershkovits2017isoperimetric}, the above
 parabolic isoperimetric estimates can be interpreted as optimal $L^1$ estimates on the gradient of arrival time function $u$  of outward minimizing level set flow
 \begin{equation}
      \int_{\Omega} |\nabla u|\,dx
\leq
\frac{P(\Omega)^{\frac{n+2}{n}}}
{n(n+2)|\mathbb{S}^n|^{{\frac{2}{n}}}}.
  \end{equation}
In addition, for the further discussion about the improvement, stability and optimal parabolic isoperimetric estimates in the situation of two dimensional surfaces in Cartan-Hadamard manifolds, the readers can refer to the end of Section \ref{proofofmainresults}. 
\end{remark}

\subsection{Proof strategy}\label{strategy} 
Now, we briefly discuss key ideas, difficulties and crucial ingredients. 
We first consider the toy model case, in which the  mean curvature flow   of closed hypersurfaces $\{M_t\}_{0\leq t\leq T_{\textrm{ext}}}$ is outward minimizing and smooth until its extinction time. By Cauchy–Schwarz inequality and Minkowski inequality for outward minimizing closed hypersurface noticed by Huisken (see \cite{HuiskenIlmanen2001} and see also \cite[Theorem 2(b)]{FreireSchwartz2014} for a published proof), we obtain
\be\label{toymtode} \frac{d}{dt}|M_t| = -\int_{M_t}H^2 \leq -|M_t|^{-1}\left(\int_{M_t}H\right)^2\leq -n^2|\mathbb{S}^n|^{\frac2n}|M_t|^{\frac{n-2}n}. \ee
Integrating \eqref{toymtode} until extinction time, one has the optimal estimate
\begin{equation}\label{toytestimate}
    T_{\textrm{ext}} \leq {\frac{1}{2n}}\left(\frac{|M_0|}{|\mathbb{S}^n|}\right)^{{\frac{2}{n}}}
\end{equation}
To analyze the equality case of \eqref{toytestimate}, we let $\Phi(t)=|M_t|^{\frac{2}{n}}+2n|\mathbb{S}^n|^{\frac2n}t$. By \eqref{toymtode}, $\Phi(t)$ is nonincreasing. By \eqref{toytestimate} and $\Phi(0)=|M_0|^{\frac{2}{n}}$, we have
\begin{equation}
    |M_t|^{\frac{2}{n}}=|M_0|^{\frac{2}{n}}-2n|\mathbb{S}^n|^{\frac2n}t
\end{equation}
Hence
\begin{equation}
    \frac{d}{dt}|M_t|^{\frac{2}{n}}=-2n|\mathbb{S}^n|^{\frac2n}
\end{equation}
and
\begin{equation}
\int_{M_t} H^2\,d\mu_t
=
n^2 |\mathbb{S}^n|^{{\frac{2}{n}}}
|M_t|^{\frac{n-2}{n}}.
\end{equation}
In particular, at $t=0$, for the mean curvature $H_0$ of $M_0$
\begin{equation}\label{H02integral=}
    \int_{M_0} H_0^2\,d\mu_0
=
n^2 |\mathbb{S}^n|^{{\frac{2}{n}}}
|M_0|^{\frac{n-2}{n}}.
\end{equation}
By Cauchy–Schwarz inequality and Minkowski inequality and \eqref{H02integral=}, we have
\begin{equation}
    \int_{M_0} H_0^2
=
\frac{\left(\int_{M_0} H_0\right)^2}{|M_0|}
=
n^2 |\mathbb{S}^n|^{{\frac{2}{n}}}
|M_0|^{\frac{n-2}{n}}.
\end{equation}
By the equality case of Minkowski inequality for outward minimizing closed hypersurface in \cite{HuiskenIlmanen2001} or \cite[Theorem 2 (b)]{FreireSchwartz2014}, $M_0$ must be a round sphere.  For the optimal parabolic estimates, it follows from  parabolic co-area formula in \cite[Theorem D, Example 3.5]{hershkovits2017isoperimetric}, optimal extinction time estimates and \eqref{toymtode}.

However, there are several delicate difficulties to apply the above argument for general mean curvature flows.  First of all, by taking the outward minimizing hull, the proof of Theorem \ref{Optimal extinction time estimates for weak mean curvature flow}  can be reduced to showing optimal extinction time estimates in \eqref{eq:general-extinction}, \eqref{eq:general-brakke-extinction} (see also Proposition  \ref{thm:LSF-Brakke extinction optimal}), but taking outward minimizing hull of initial smooth closed hypersurface of mean curvature flow may lose  regularity since the boundary of  outward minimizing hull has at most $C^{1, 1}$ regularity in the contact part and has codimension $7$ singularities in the free minimizing part. The regularity is important for applying extended Minkowski type inequality \cite[Theorem~1.1]{AFM22}, and it is also crucial for later quantitative analysis such as elliptic regularization. Secondly, the outward minimizing level set flows used in the proofs of Theorem \ref{Optimal extinction time estimates for weak mean curvature flow} and Theorem \ref{sharp parabolic estimates} generally have very complicated singularity structure. Hence, the area-dissipation
identity and the Minkowski inequality may fail across {those singular times}. Finally, for the equality case analysis, the extended Minkowski type inequality \cite[Theorem~1.1]{AFM22} we used does not include a rigidity or stability characterization, so we cannot fully apply the above toy model equality case strategy.

To overcome the first difficulty, we construct almost outward minimizing, strictly mean convex smooth approximation inspired by \cite{BZ26Penrose, BZ26CurvatureFree}. 
%{where certain regularity\footnote{More precisely, \cite[Lemma 5.5]{AFM22} assumes smoothness condition of $\partial\Omega$, and applying \cite[Theorem 1.1]{Schmidt2015} requires $\cH^n(\partial\Omega^*)=P(\Omega^*)$.} is required for this construction due to \cite[Lemma 5.5]{AFM22} and \cite[Theorem 1.1]{Schmidt2015}.}
For removing the second stumbling block, motivated by Schulze \cite{schulze2020optimal}, we apply Ilmanen's elliptic regularization \cite{Ilm94}, to obtain an approximate flow with improved regularity.
We then apply extended Minkowski inequality \cite{AFM22} to the time slices of the regularized flow and compare the translator curvature with the slice curvature. By taking a limit we establish Minkowski inequality for almost every time slice of the level set flow starting from mean convex approximations. To address the final obstacle in the equality case, we show that the mean curvature of suitable time slices of the approximating flows converges to a constant as the corresponding times tend to zero. Applying the weak Alexandrov theorem \cite{DM19}, together with the equality case to rule out multiple components, we conclude that the outward-minimizing hull of the initial domain is a round ball.
\subsection{Outline of proof}
In Section \ref{Preliminaries}, we discuss some preliminary setup and facts about  level-set flow, elliptic regularization, Brakke flow and outward minimizing property.  In Section \ref{timeslicesection}, we establish some approximation lemmas regarding the time-slices
of the elliptically regularized flow. In Section \ref{almostminkowski}, we derive the approximate Minkowski inequality for almost every time slice. In Section \ref{mean convex approximation}, we construct  almost outward
minimizing strictly mean convex approximating  flows used in  the proof of our main results. In Section \ref{proofofmainresults}, we use the above approximate flows to
conclude the proof of the optimal extinction time estimates and equality case in Theorem \ref{Optimal extinction time estimates for weak mean curvature flow}. In addition, we also prove the $L^{p}$-estimates of arrival time function of smooth mean convex domain in Theorem \ref{lpestimates arrival time} and show the optimal parabolic isoperimetric estimate for the space-time track of outward minimizing mean curvature flow in Theorem \ref{sharp parabolic estimates}. Finally, we also give some remarks and
propose some related problems.

\subsection*{Acknowledgements} This work was initiated in memory of Professor Tom Ilmanen during the 21st KIAS winter school on Differential Geometry (January 11 - 16, 2026). The foundational contributions of  Professor Tom Ilmanen to  mean curvature flow field and in particular the elliptic regularization method, theory of weak mean curvature flow through singularities and  the ideas of outward minimizing hulls arising from his joint work with Professor Gerhard Huisken on inverse mean curvature flow and the Riemannian Penrose inequality play an essential role  in our present work.
The first, third and last authors have been supported by the National Research Foundation of Korea (NRF) grant funded by the Korean
government (MSIT) NRF-2022R1C1C1013511 and  RS-2023-00219980 and by
Samsung Science \& Technology Foundation grant SSTF-BA2302-02. The second author was supported  by an AMS-Simons Travel
Grant while at the Massachusetts Institute of Technology, and is currently supported by Hunan University, and he also thanks Professor Felix Schulze for his encouragement regarding this project during the 2026 BIRS summer workshop on Geometric Flows and Related Topics in Banff.

\subsection*{AI disclosure} 
The writing and mathematical content of this paper are due to the authors.  All the crucial strategies  about taking outward minimizing hull, adapting elliptic regularization and constructing almost outward minimizing, strictly mean convex smoothing approximation are proposed by the authors. Besides the AI assistance with literature searches and language editing in this project, AI tools are used in analyzing the  convergence issue in the rigidity case discussion in the proof of Proposition  \ref{thm:LSF-Brakke extinction optimal}.

\section{Preliminaries}\label{Preliminaries}

The level-set formulation provides a weak continuation past
singularities by encoding the evolving hypersurfaces as level sets of a scalar
function.  This approach was developed independently by Evans--Spruck \cite{ES91} and by
Chen--Giga--Goto \cite{CGG91}, and it is especially effective for mean convex initial data,
where the flow moves monotonically inward.

Let $\Omega\subset \R^{n+1}$ be a bounded domain with smooth, strictly mean convex boundary
$M_0=\partial\Omega$.  The mean convex level set flow can be described by an
arrival-time function
\[
        u:\overline{\Omega}\to [0,\infty),
\]
where $u(x)$ is the time at which the evolving front reaches the point $x$.
With the sign convention used here, the time slices are
\[
        K_t=\{x\in \Omega : u(x) \geq t\}, \qquad M_t=\partial K_t,
\]
and, away from the singular set, the arrival time satisfies
\begin{equation}\label{eq:arrival}
        \operatorname{div}\!\left(\frac{Du}{|Du|}\right)
        =
        -\frac{1}{|Du|}
        \qquad \text{in } \Omega,
        \qquad
        u=0 \quad \text{on } \partial\Omega .
\end{equation}
As \eqref{eq:arrival} is a degenerate elliptic equation, elliptic regularization replaces \eqref{eq:arrival} by a family of uniformly
elliptic equations.  For $0<\eps\leq1$, let $u^\eps$ solve
\begin{equation}\label{eq:elliptic-reg}
        \operatorname{div}\!\left(
        \frac{Du^\eps}{\sqrt{\eps^2+|Du^\eps|^2}}
        \right)
        =
        -\frac{1}{\sqrt{\eps^2+|Du^\eps|^2}}
        \qquad \text{in } \Omega,
        \qquad
        u^\eps=0 \quad \text{on } \partial\Omega .
\end{equation}

From the strict mean convexity of $M_0$,
one can construct a barrier to deduce that $|Du^\eps|$ is bounded on $M_0$.
By the maximum principle, this gives an a priori gradient bound on $\overline \Omega$,
therefore \eqref{eq:elliptic-reg} is uniformly elliptic.
Applying the standard method of continuity, along with
De Giorgi--Nash--Moser and Schauder estimates yields a smooth solution.
As the gradient bound is uniform over $0<\eps\leq 1$,
by Arzela-Ascoli Theorem, every $\eps_i\to 0$ admits a subsequence such that
$u^{\eps_i}\to \tilde u$ uniformly on $\overline \Omega$
for some $\tilde u\in C^{0,1}(\overline \Omega)$.
Uniqueness gives a viscosity solution $u=\lim_{\eps\to0}u_\eps$ for the original problem \eqref{eq:arrival}. We summarize the convergence of $u^\eps$ to $u$ and other related quantities in the following lemma.

\begin{lemma}[{\cite[§3]{ES95}, see also \cite[Proposition 2.2 and Lemma 2.3]{MS08}}]\label{lem:elliptic-regularization-conv}
Let $\Omega\subset \R^{n+1}$ be a bounded domain with smooth, strictly mean convex boundary,
and let $u^\eps$ be the solutions of \eqref{eq:elliptic-reg}.  Let $u$ denote
the arrival-time function of the mean convex level set flow starting from
$\partial\Omega$, normalized by $u=0$ on $\partial\Omega$.  Then, as
$\eps\downarrow 0$,
\begin{enumerate}
    \item $u\in C^{0,1}(\overline{\Omega})$ and satisfies the arrival-time equation
    \eqref{eq:arrival} in the viscosity sense;

    \item \[u^\eps\to u \qquad
    \text{uniformly on } \overline{\Omega};\]

    \item
    \[
        |Du^\eps| \overset{*}{\rightharpoonup} |Du|
        \qquad \text{in } L^\infty(\Omega),
    \]
    that is, the integral converges when tested with a function in $L^1(\Omega)$;

    \item 
    \[
        \frac{Du^\eps}{\sqrt{\eps^2+|Du^\eps|^2}} \to \frac{Du}{|Du|}
        \qquad \text{in } L^p(\Omega;\R^{n+1})
    \]
    for every $1\le p<\infty$.
\end{enumerate}
\end{lemma}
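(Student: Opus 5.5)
We would prove Lemma \ref{lem:elliptic-regularization-conv} through the standard rescaled-graph picture of Evans--Spruck. The object to track is the function $U^\eps(x,z):=u^\eps(x)-\eps z$ on $\Omega\times\R$, whose graph (a level set of $U^\eps$) is a translating solution of mean curvature flow in $\R^{n+2}$.

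\emph{Step 1: uniform estimates.} First we establish the uniform estimates in $\eps$:
\begin{itemize}
\item a sup bound $0\le u^\eps\le C$, obtained by comparing with a large shrinking-sphere type supersolution;
\item the uniform gradient bound $|Du^\eps|\le C$, with the boundary bound coming from the strict mean convexity barrier described before the lemma and propagated inward by the maximum principle applied to the differentiated equation.
\end{itemize}
Arzelà--Ascoli then gives uniform subsequential convergence $u^{\eps_i}\to\tilde u$ in $C^{0,1}$-weak$^*$.

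\emph{Step 2: items (1) and (2).} Stability of viscosity solutions under uniform limits shows that $\tilde u$ is a viscosity solution of \eqref{eq:arrival} with zero boundary data. Uniqueness of such solutions, via the Evans--Spruck comparison principle for the degenerate equation, forces $\tilde u=u$. Hence the whole family converges, which gives (1) and (2); this comparison principle is taken from \cite{ES91,ES95}.

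\emph{Step 3: items (3) and (4).} This is the analytic heart. Set $\nu^\eps:=Du^\eps/\sqrt{\eps^2+|Du^\eps|^2}$, which satisfies $|\nu^\eps|\le1$. By weak$^*$ compactness, $\nu^\eps\rightharpoonup\nu$ and $|Du^\eps|\rightharpoonup g$ along subsequences.

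\begin{itemize}
\item \emph{Identifying the limit equation.} Multiply \eqref{eq:elliptic-reg} by $\sqrt{\eps^2+|Du^\eps|^2}$. Integrating against test functions $\phi(u^\eps)\psi$ and using the identity $\nu^\eps\cdot Du^\eps=\sqrt{\eps^2+|Du^\eps|^2}-\eps^2/\sqrt{\cdot}$, we pass to the limit to obtain $\operatorname{div}\nu=-1/|Du|$ weakly on $\{|Du|>0\}$, together with $\nu\cdot Du=g$.
\item \emph{The key inequality.} Lower semicontinuity gives $\int|Du|\psi\le\liminf\int|Du^\eps|\psi$. For the reverse inequality we use the weak form with test function $u^\eps\psi$:
\[
\int \nu^\eps\cdot Du^\eps\,\psi=\int \frac{u^\eps\psi}{\sqrt{\cdot}}-\int u^\eps\,\nu^\eps\cdot D\psi .
\]
Here $\nu^\eps\cdot Du^\eps\approx|Du^\eps|$, and the right side converges to $\int u\,\psi\,|Du|^{-1}\ldots$. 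This term needs care where $|Du|$ is small; we use instead that $\int \sqrt{\eps^2+|Du^\eps|^2}^{-1}\psi$ is bounded by the equation, and handle it as a measure. The right side passes to the limit, and we compare with the same identity for $u$. Since $|\nu|\le1$, this yields $\lim\int|Du^\eps|\psi=\int\nu\cdot Du\,\psi\le\int|Du|\psi$, hence (3).
\item \emph{Strong convergence.} The argument also forces $\nu\cdot Du=|Du|$, so $\nu=Du/|Du|$ a.e.\ on $\{Du\neq0\}$. On $\{Du=0\}$ the set has measure zero for the mean convex arrival time: there $u$ would be locally constant, contradicting $\operatorname{div}\nu=-1/|Du|$ in the weak sense. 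Moreover $\int|\nu^\eps|^2\to\int|\nu|^2=|\Omega|$, because $|\nu^\eps|\to1$ in measure wherever $|Du^\eps|\gg\eps$. Weak convergence combined with convergence of norms gives strong $L^2$ convergence, and the bound $|\nu^\eps|\le1$ upgrades this to all $L^p$ with $p<\infty$, which is (4).

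\end{itemize}

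\emph{Main obstacle.} We expect the hardest step to be the reverse inequality in (3), i.e.\ excluding loss of mass of $|Du^\eps|$ near $\{|Du|=0\}$ and near the singular set. First we would try to adapt the argument of \cite[Lemma 2.3]{MS08}, which uses the energy identity above together with the fact that $|\{|Du|=0\}|=0$ for mean convex flows.
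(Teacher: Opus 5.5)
Your argument for (1)--(3) is essentially sound. Set $w_\varepsilon=\sqrt{\varepsilon^2+|Du^\varepsilon|^2}$. You test \eqref{eq:elliptic-reg} with $u^\varepsilon\psi$ and use the flux bound $\int_\Omega w_\varepsilon^{-1}\,dx=-\int_{\partial\Omega}\nu^\varepsilon\cdot n\,d\mathcal H^n\le P(\Omega)$ to treat $w_\varepsilon^{-1}dx$ as measures converging weakly-$*$ to some $m$. Comparing with the limit identity $\operatorname{div}\nu=-m$ tested with $u\psi$ then gives $\lim\int|Du^\varepsilon|\psi=\int\nu\cdot Du\,\psi\le\int|Du|\psi$, hence (3) and $\nu\cdot Du=|Du|$ a.e.

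Compare with $\operatorname{div}\nu=-m$, not with $\operatorname{div}\nu=-1/|Du|$. The latter cannot be obtained by weak limits: $w_\varepsilon^{-1}$ converges only weakly, and convexity gives merely $m\ge|Du|^{-1}dx$. Likewise, $\nu\cdot Du=g$ in your first bullet does not follow from weak convergence alone; it is what the energy identity proves.

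\textbf{The gap.} The genuine gap is the step $|\{Du=0\}|=0$, which (4) needs in order to have $|\nu|=1$ a.e. Your justification fails for two reasons. First, a Lipschitz function can have vanishing gradient on a set of positive measure with empty interior (e.g.\ the distance to a fat Cantor set), so $u$ need not be locally constant anywhere. Second, the equation you want to contradict was only asserted on $\{|Du|>0\}$, so it says nothing on $\{Du=0\}$.

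\textbf{The missing ingredient.} What you need is the estimate $\int_\Omega|Du|^{-1}dx\le P(\Omega)$, which follows from your flux bound by convexity of $s\mapsto 1/s$. Since $0\le w_\varepsilon-|Du^\varepsilon|\le\varepsilon$, (3) gives $w_\varepsilon\overset{*}{\rightharpoonup}|Du|$. Insert the tangent-line inequality $w_\varepsilon^{-1}\ge 2a^{-1}-a^{-2}w_\varepsilon$ with $a=\max\{|Du|,\delta\}$ and let $\varepsilon\to0$. This yields
\[
\int_{\{|Du|\ge\delta\}}\frac{dx}{|Du|}+\frac{|\{|Du|<\delta\}|}{\delta}\le\liminf_{\varepsilon\to0}\int_\Omega\frac{dx}{w_\varepsilon}\le P(\Omega),
\]
so $|\{|Du|<\delta\}|\le\delta P(\Omega)\to0$. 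If you instead intend to import $|\{Du=0\}|=0$ from \cite{MS08}, as your last paragraph suggests, say so and drop the incorrect argument.

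Once $|\nu|=1$ a.e., norm convergence is automatic: $\int|\nu|^2\le\liminf\int|\nu^\varepsilon|^2\le|\Omega|=\int|\nu|^2$. So your claim that $|\nu^\varepsilon|\to1$ in measure ``wherever $|Du^\varepsilon|\gg\varepsilon$'' is unnecessary; as stated it also assumes what is to be shown. Strong $L^2$, and hence $L^p$, convergence then follows as you say.
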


The regularized equation has the following geometric interpretation.
For each $\eps>0$, the family of graphs $(N_t^\eps)$ given by
\[
        N_t^\eps
        =
        \left\{ \left(x,\frac{u^\eps(x)-t}{\eps}\right)
        : x\in \Omega \right\}
        \subset \Omega\times \mathbb{R}
\]
is a smooth translating mean curvature flow with translating speed $\eps^{-1}$.
Let
\[
        K_t=\{x\in\Omega:u(x) \geq t\},
        \qquad
        \mu_t:=\mathcal H^n\llcorner \partial^*K_t,
\]
\[
        \widetilde\mu_t
        :=
        \mu_t\otimes \mathcal L^1
        =
        \mathcal H^{n+1}\llcorner(\partial^*K_t\times\mathbb R).
\]
Roughly speaking, the translator $N_t^\eps$ elongates in the vertical direction,
and its mean curvature $H^\eps$ is close to the mean curvature of its horizontal slice.
Therefore as $\eps \rightarrow 0$ the translators converge to the level set 
with an additional $\R$ factor (namely $\partial^*K_t \times \R$) in a suitable sense.
The precise sense in which the translators converge to the level set flow is
the following:
\begin{lemma}[\cite{schulze2008nonlinear}]\label{lem:translator-convergence}
Given a sequence $\eps_i\downarrow0$, for
a.e. $t\ge0$,
\[
        \mathcal H^{n+1}\llcorner N_t^{\eps_i}
        \rightharpoonup
        \widetilde\mu_t
        \qquad
        \text{as Radon measures on } \Omega\times\mathbb R,
\]
and the rectifiable $n$-varifold associated with $\partial^*K_t$ carries a weak mean curvature vector
$\vec H_t \in L^2(\partial^*K_t, \cH^n)$ for a.e. t.
\end{lemma}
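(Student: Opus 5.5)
The natural route follows Schulze's argument in \cite{schulze2008nonlinear}: pass from the translators $N_t^{\eps_i}$ to the cylinder $\partial^*K_t\times\R$ using Brakke-flow compactness and the graphical structure of $u^{\eps}$.

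\textbf{Step 1: the translators as a Brakke flow.} The family $\{N_t^{\eps}\}_{t}$ is a smooth mean curvature flow in $\Omega\times\R$; it is simply the time-translate of one graph. Its mass is locally uniformly bounded, since the area of a graph over $\Omega$ restricted to a slab $\Omega\times[a,b]$ is controlled by $\int_\Omega\sqrt{1+\eps^{-2}|Du^\eps|^2}$ over the corresponding region. This is in turn controlled through the co-area formula and the uniform gradient bound on $u^\eps$.

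\textbf{Step 2: compactness.} By Ilmanen's compactness theorem for integral Brakke flows, applied with this local mass bound, a subsequence of $\eps_i$ has $\mathcal H^{n+1}\llcorner N_t^{\eps_i}\rightharpoonup\nu_t$ for every $t$. The limit $\{\nu_t\}$ is an integral Brakke flow, and for a.e.\ $t$ the measure $\nu_t$ is an integral varifold with $L^2$ mean curvature. This uses Fatou applied to $\int_0^T\!\int H^2\,d\nu_t\,dt<\infty$, which in turn comes from Brakke's inequality and the mass bound.

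\textbf{Step 3: identifying the limit.} We must show $\nu_t=\widetilde\mu_t$ for a.e.\ $t$, and that the limit does not depend on the subsequence. Because $N_t^\eps$ is the graph of $(u^\eps-t)/\eps$, each horizontal slice $N_t^\eps\cap\{x_{n+2}=s\}$ is the level set $\{u^\eps=t+\eps s\}$. Lemma \ref{lem:elliptic-regularization-conv}(2) then shows that the support of $\nu_t$ lies in $\partial K_t\times\R$. The limit is also invariant under vertical translation: a shift by $s$ in the vertical direction corresponds to a time shift of $\eps s\to0$, and $\nu_t$ is continuous in $t$ at a.e.\ time.

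To pin down the density, use the area formula for the graph:
\begin{equation*}
\mathcal H^{n+1}\llcorner N^\eps_t\approx \frac{\sqrt{\eps^2+|Du^\eps|^2}}{\eps}\,\mathcal L^{n+1}\llcorner\Omega .
\end{equation*}
Pushing forward and applying the co-area formula along $x\mapsto u^\eps(x)$ gives $\int\phi(x)\psi(x_{n+2})\,d\mathcal H^{n+1}\llcorner N_t^\eps$ in terms of the level-set perimeters of $u^\eps$, up to an error involving $1-|Du^\eps|/\sqrt{\eps^2+|Du^\eps|^2}$. Lemma \ref{lem:elliptic-regularization-conv}(3),(4) show this error is small in $L^1$. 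As a result, the mass converges to $\int\psi\,ds\cdot\int\phi\,d\mathcal H^n\llcorner\partial^*K_t$ for a.e.\ $t$, after averaging in $t$ and using the co-area formula for $u$. This gives unit density and the full-sequence convergence, since the limit is now identified.

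\textbf{Step 4: conclusion.} Since $\widetilde\mu_t=\nu_t$ carries an $L^2$ weak mean curvature and is a product, slicing gives that $\partial^*K_t$ has weak mean curvature $\vec H_t\in L^2$ for a.e.\ $t$.

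\textbf{Main obstacle.} The hardest point is the density identification in Step 3, in particular excluding higher multiplicity or a missing mass. I would attack it through the lower semicontinuity of perimeter together with the matching upper bound coming from the $L^p$ convergence of the normalized gradients.
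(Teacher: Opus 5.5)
The paper does not prove this lemma. It quotes it from \cite{schulze2008nonlinear} (see also \cite{MS08}). Schulze works with $H^k$-flows, where no Brakke-flow compactness is available, so the cited argument works directly with the graphs, via the co-area formula and the convergences in Lemma~\ref{lem:elliptic-regularization-conv}. Your route through Ilmanen's compactness is a legitimate alternative for mean curvature flow.

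The density identification you single out as the main obstacle is in fact the easy step. Substituting $t=u^\eps(x)-\eps z$ gives
\[
\int\chi(t)\int\phi\,d\bigl(\mathcal H^{n+1}\llcorner N^\eps_t\bigr)\,dt
=\int_\Omega\int_{\mathbb R}\phi(x,z)\,\chi\bigl(u^\eps(x)-\eps z\bigr)\sqrt{\eps^2+|Du^\eps|^2}\,dz\,dx .
\]
You have $0\le\sqrt{\eps^2+|Du^\eps|^2}-|Du^\eps|\le\eps$, $u^\eps\to u$ uniformly, and $|Du^\eps|\overset{*}{\rightharpoonup}|Du|$. Hence this tends to $\int\chi(t)\,\widetilde\mu_t(\phi)\,dt$, by co-area and Remark~\ref{rmk:boundary-equivalence}. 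With a uniform mass bound, this forces every subsequential Brakke limit to equal $\widetilde\mu_t$ for a.e.\ $t$. All the no-hidden-mass content sits in Lemma~\ref{lem:elliptic-regularization-conv}(3). Neither the support/translation-invariance discussion nor a lower-semicontinuity argument is needed.

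Two steps need repair.

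\emph{The mass bound in Step~1.} It does not follow from co-area and the gradient bound. Those give $|\Omega|+\eps^{-1}\int_{t+\eps a}^{t+\eps b}\cH^n(\{u^\eps=s\})\,ds$. A Lipschitz bound controls only the $s$-integral of level-set areas, which is useless over an interval of length $\eps(b-a)$. Instead, calibrate with the $z$-independent field $\tilde\nu=(-Du^\eps,\eps)/\sqrt{\eps^2+|Du^\eps|^2}$, which is the upward unit normal of every $N^\eps_t$. By \eqref{eq:elliptic-reg} its divergence is $H^\eps>0$. The divergence theorem on the part of $\Omega\times(a,b)$ above $N^\eps_t$ then gives
\[
\mathcal H^{n+1}\bigl(N^\eps_t\cap(\Omega\times(a,b))\bigr)\le 2|\Omega|+(b-a)P(\Omega),
\]
uniformly in $\eps$ and $t$.

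\emph{Full-sequence convergence.} The inference ``the limit is identified, hence the full sequence converges'' is unjustified. The exceptional null set in Step~3 depends on the subsequence chosen in Step~2, and there are uncountably many subsequences. The statement, and its use in Lemmas~\ref{pinching} and~\ref{strong slice conv}, concerns the given sequence. Use the Brakke structure:
\begin{enumerate}
\item For $0\le\phi\in C^2_c(\Omega\times\mathbb R)$, the functions $t\mapsto\mathcal H^{n+1}\llcorner N^{\eps_i}_t(\phi)-C_\phi t$ are nonincreasing, with $C_\phi$ independent of $i$ by Brakke's inequality and the mass bound.
\item By the averaged identity above, which holds for the whole sequence, they converge weakly in $t$ to $\widetilde\mu_t(\phi)-C_\phi t$.
\item By Helly's theorem the whole sequence then converges off a countable set, to a monotone function equal to $\widetilde\mu_t(\phi)-C_\phi t$ a.e.
\item Run this over a countable dense family of $\phi$.
\end{enumerate}
Ilmanen's compactness is then only needed for integrality and the $L^2$ curvature bound. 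Step~4, testing with horizontal fields $Y(x)\eta(z)$, is fine.

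What your approach buys is this time-monotonicity. It converts a time-averaged identification into a.e.-time convergence of the given sequence. It is also the framework in which the paper later applies Brakke's local regularity theorem. The direct argument of the cited source, by contrast, does not depend on the speed law.
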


\begin{remark}\label{rmk:boundary-equivalence}
As noted in \cite[Section 2]{MS08}, since $u$ is Lipschitz, it also belongs to BV function class. Then we can apply co-area formula for Lipschitz functions and BV-functions. For large $T>\sup u$, 
\[ \int_0^T\cH^n(\partial^*K_t)dt=\int_\Omega|Du|dx=\int_0^T\cH^n(\{u=t\})dt.\]
Together with
\[\partial^*K_t \subset \{u=t\},\]
we conclude that the reduced boundary $\partial^*K_t$ agrees with the level set $M_t=\{u=t\}$ up to $\mathcal{H}^n$--null set for a.e.-$t$. A similar statement holds for the smooth functions $u^\eps$ for $\eps>0$.
\end{remark}

Now, we recall the definition of Brakke flow, which is a measure-theoretic approach to continue mean curvature flow through
singularities.
\begin{definition}[Integral Brakke flow cf. {\cite{Ilm94, SchulzeBrakkeFlows}}]\label{def-Brakke flow}
An $n\text{-dimensional}$  integral Brakke flow in $\mathbb{R}^{n+1}$
is a one-parameter family of Radon measures $(\mu_t)_{t\in I}$
over an interval $I\subset\mathbb{R}$ so that:
\begin{enumerate}
    \item For almost every $t\in I$, there exists an 
    $n$-dimensional integral varifold $V_t$ with
    \[
    \mu_t=\mu_{V_t}
    \]
    so that $V_t$ has locally bounded first variation, and has weak mean
    curvature vector $\vec H$ orthogonal to $\operatorname{Tan}(V(t),\cdot)$
    almost everywhere.

    \item For a bounded interval $[t_1,t_2]\subset I$ and any compact set
    $K\subset\mathbb{R}^{n+1}$,
    \[
    \int_{t_1}^{t_2}\int_K
    \left(1+|\vec H|^2\right)\,d\mu_t\,dt<\infty.
    \]

    \item If $[t_1,t_2]\subset I$ and
    \[
    f\in C_c^1\bigl(\mathbb{R}^{n+1}\times[t_1,t_2]\bigr),
    \qquad f\geq 0,
    \]
    then we have Brakke's inequality 
    \begin{align}\label{Brakke inequality}
         &\quad\int f(\cdot,t_2)\,d\mu_{t_2}
    -
    \int f(\cdot,t_1)\,d\mu_{t_1}\\\notag
    &\leq
    \int_{t_1}^{t_2}\int
    \left(
        -|\vec H|^2 f
        +\langle\vec H,\nabla f\rangle
        +\frac{\partial f}{\partial t}
    \right)
    d\mu_t\,dt.
    \end{align}
\end{enumerate}
\end{definition}

Compared with level set flow, Brakke flow is not necessarily unique and has potential mass drop. A nice property of mean convex level set flow is that given a mean convex level set flow $\{K_t\}$, the flow
\begin{equation}
\mu_t=\cH^n\llcorner\partial^*K_t
\end{equation}
becomes a unit density Brakke flow, where $\partial^*K_t$ is the reduced boundary of $K_t$.
Metzger-Schulze later proved that this is a Brakke flow with no mass drop, and the defining Brakke's inequality holds with equality \cite{MS08}.
\begin{lemma}[cf. {\cite{MS08}}]\label{lem:no-mass-drop} 
Let $\Omega\subset\R^{n+1}$ be a bounded open set whose boundary is a closed $C^1$ hypersurface carrying a nonnegative weak mean curvature in $L^2$. If $(\mu_t)_{t\geq0}$ is the family of Radon measures associated with the level set flow of $\partial\Omega$, for $\phi\in C_c^2(\mathbb{R}^{n+1})$, $\phi\ge0$, and $0\le t_1<t_2$, we have
\[
        \mu_{t_2}(\phi)-\mu_{t_1}(\phi)
        =
        \int_{t_1}^{t_2}\int
        \left(
            -\phi |\vec H|^2+\langle \nabla \phi,\vec H\rangle
        \right)
        \,d\mu_t\,dt,
\]
i.e. the defining inequality of Brakke flow (Brakke's inequality) holds with equality.
\end{lemma}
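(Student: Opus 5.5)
The plan is to bypass Brakke-flow compactness, which only gives an inequality, and compute $\mu_t(\phi)$ directly from the arrival time function $u$. The identity then follows from the divergence structure of \eqref{eq:arrival} together with the co-area formula. I would first treat the case where $\partial\Omega$ is smooth and strictly mean convex, so that Lemma \ref{lem:elliptic-regularization-conv} is available. The general $C^1$ case with nonnegative $L^2$ weak mean curvature would then be handled by approximating $\Omega$ from inside by smooth strictly mean convex domains, as in \cite{MS08}.

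\textbf{Step 1 (distributional arrival-time equation).} Write $\nu^\eps:=Du^\eps/\sqrt{\eps^2+|Du^\eps|^2}$. For $\psi\in C^1_c$, testing \eqref{eq:elliptic-reg} gives
\[
-\int \nabla\psi\cdot \nu^\eps \,dx=-\int \psi\,\frac{dx}{\sqrt{\eps^2+|Du^\eps|^2}} .
\]
By Lemma \ref{lem:elliptic-regularization-conv}(4), the left side converges to $-\int\nabla\psi\cdot Du/|Du|\,dx$. On the right side, Fatou's lemma together with the uniform bound of the left side shows that $|Du|^{-1}\in L^1_{loc}$ and that the limit is $-\int\psi/|Du|\,dx$. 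In particular, $\{Du=0\}$ is Lebesgue-null. The passage to the limit here is the main obstacle: the integrand $1/\sqrt{\eps^2+|Du^\eps|^2}$ is not controlled by weak-$*$ convergence alone. I would first try to establish uniform integrability of this integrand by using the equation itself, since $\int_{\{u^\eps\ge t\}} (\eps^2+|Du^\eps|^2)^{-1/2}\,dx$ equals a boundary flux that is bounded by the perimeter of the superlevel set. I would then identify the limit of the integrand with $1/|Du|$ through strong $L^p$ convergence of $\nu^\eps$. The outcome of this step is that $\operatorname{div}(Du/|Du|)=-1/|Du|$ holds in $\mathcal D'(\Omega)$.

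\textbf{Step 2 (representing $\mu_t(\phi)$).} By Remark \ref{rmk:boundary-equivalence}, for a.e. $t$ the outer unit normal of $K_t$ on $\partial^*K_t$ is $-Du/|Du|$. Applying Step 1 with $\psi=\phi\,\chi_{K_t}$, after mollifying $\chi_{K_t}$ and using Gauss--Green for the BV set $K_t$, I expect
\[
\mu_t(\phi)=-\int_{K_t}\operatorname{div}\Big(\phi\frac{Du}{|Du|}\Big)dx=\int_{\{u\ge t\}}\Big(\frac{\phi}{|Du|}-\nabla\phi\cdot\frac{Du}{|Du|}\Big)dx .
\]
The right side is absolutely continuous in $t$, because the integrand lies in $L^1$. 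This representation therefore also removes the exceptional set of times and gives the statement for all $0\le t_1<t_2$.

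\textbf{Step 3 (co-area).} For $f\in L^1$, the co-area formula gives
\[
\int_{\{t_1\le u<t_2\}}f\,dx=\int_{t_1}^{t_2}\int_{\{u=t\}}\frac{f}{|Du|}\,d\mathcal H^n\,dt,
\]
and $\{u=t\}$ agrees with $\partial^*K_t$ up to an $\mathcal H^n$-null set for a.e. $t$. Hence
\[
\mu_{t_2}(\phi)-\mu_{t_1}(\phi)=\int_{t_1}^{t_2}\int\Big(-\frac{\phi}{|Du|^2}+\nabla\phi\cdot\frac{Du}{|Du|^2}\Big)d\mu_t\,dt .
\]
It remains to identify the weak mean curvature of Lemma \ref{lem:translator-convergence} as $\vec H_t=Du/|Du|^2$, that is, inward normal times $H=1/|Du|$. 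This follows by computing the first variation of $\partial^*K_t$ from Step 1: for a vector field $X$, test with $\psi=\chi_{K_t}\operatorname{div}X$-type expressions, or equivalently pass to the limit in the translators' first variation using Lemma \ref{lem:translator-convergence}. Substituting this identification yields exactly the claimed identity.

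Finally, for a general $C^1$ boundary with nonnegative $L^2$ weak mean curvature, I would take interior smooth strictly mean convex approximations $\Omega_j$. Their arrival times $u_j$ converge to $u$ locally uniformly. The identities pass to the limit because the $L^1$ norms of $1/|Du_j|$ are controlled by $\int H^2$ on $\partial\Omega_j$, which converges to the corresponding quantity on $\partial\Omega$.
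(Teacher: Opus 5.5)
The paper does not prove this lemma; it quotes it from \cite{MS08}. Your Step 1 has a genuine gap, and it is exactly where the content of the lemma lies. Testing \eqref{eq:elliptic-reg} shows that the positive measures $H^\eps dx=(\eps^2+|Du^\eps|^2)^{-1/2}dx$ converge to $\lambda:=-\operatorname{div}(Du/|Du|)$. Suppose you grant the a.e.\ convergence $H^\eps\to|Du|^{-1}$. That convergence does not follow from the $L^p$ convergence of $\nu^\eps$, which only gives $\eps/|Du^\eps|\to0$ on $\{Du\neq0\}$. Even so, Fatou yields only $|Du|^{-1}dx\le\lambda$, and the defect $\lambda-|Du|^{-1}dx\ge0$ is precisely the possible mass drop.

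Indeed, your flux identity passes to the limit as $\lambda(\{u>t\})=P(K_t)$ for a.e.\ $t$. On the other hand, co-area together with $H=|Du|^{-1}$ gives $\int_{\{u>t\}}|Du|^{-1}dx=\int_t^\infty\!\int H^2\,d\mu_s\,ds$. Equality of these two quantities is the lemma with $\phi\equiv1$. So bounding by perimeters of superlevel sets gives only a uniform $L^1$ bound, and deducing uniform integrability from it would be circular.

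The missing ingredient is the smallness of the singular set. The paper itself borrows this from \cite[Lemma 3.3]{MS08} in the proof of Lemma~\ref{strong slice conv}. By \cite{Whi00} the singular set has parabolic dimension at most $n-1$, so its spatial projection $S$ is $\mathcal H^n$-null. Off $S$ the translators converge smoothly, so $H^\eps\to|Du|^{-1}$ locally uniformly on $\Omega\setminus S$. To control a neighbourhood of $S$, localize your flux argument to balls rather than superlevel sets. Since $|\nu^\eps|\le1$,
\[
\int_{B_r(x)}H^\eps\,dx=-\int_{\partial B_r(x)}\nu^\eps(y)\cdot\frac{y-x}{r}\,d\mathcal H^n(y)\le|\mathbb S^n|\,r^n
\]
uniformly in $\eps$. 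Covering $S$ by balls with $\sum_i r_i^n<\delta$ therefore makes the contribution of a neighbourhood of $S$ uniformly $O(\delta)$. This is what gives $\operatorname{div}(Du/|Du|)=-|Du|^{-1}$ with $|Du|^{-1}\in L^1$ and no singular part.

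The same input is needed in Step 3. The scalar identity controls only $\operatorname{div}\nu$, whereas the first variation of $\partial^*K_t$ involves $\operatorname{div}X-\nu\cdot DX\,\nu$. Rewriting this through $\operatorname{div}\nu$ uses derivatives of $\nu$ you do not control, so testing with scalar functions such as $\chi_{K_t}\operatorname{div}X$ does not identify $\vec H_t$. What works is to combine two facts:
\begin{itemize}
\item Lemma~\ref{lem:translator-convergence} gives an $L^2$ weak mean curvature, so the first variation has no singular part.
\item Since $\mathcal H^n(S)=0$, $\mathcal H^n$-a.e.\ point of $\partial^*K_t$ is a regular point, where $\vec H_t=Du/|Du|^2$ classically.
\end{itemize}
There are two smaller issues. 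First, Step 2 gives the representation of $\mu_t(\phi)$ only for a.e.\ $t$. Extending it to every $t_1<t_2$ requires ruling out jumps of $t\mapsto\mu_t(\phi)$ at the exceptional times, which is not automatic. Second, in your last paragraph the flux identity bounds $\|\,|Du_j|^{-1}\|_{L^1}$ by $P(\Omega_j)$, not by $\int_{\partial\Omega_j}H^2$. More importantly, a uniform $L^1$ bound again does not prevent concentration as $j\to\infty$, so that step needs the same kind of non-concentration argument.
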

In particular, by choosing $\phi\equiv1$ on the compact support of the flow $\{M_t\}=\{\partial^*K_t\}$, one obtains
the first variation of area
\begin{equation}\label{mean convex level set flow schulze}
    \frac{d}{dt}\mathcal{H}^n(M_t)
        =
        -\int_{M_t} |\vec H|^2\,d\mathcal{H}^n
        \qquad \text{for a.e. } t.
\end{equation}

Regarding the first variation of area, mean convex hypersurfaces can be understood as local one-sided minimizers of area. Among finite-perimeter sets, we can consider the global one-sided minimizing conditions defined as follows.

\begin{definition}
    A set $E\subset\R^{n+1}$ of finite perimeter is outward-minimizing if for every bounded finite perimeter set $F \supset E$,
    $P(F) \geq P(E)$. $E$ is strictly outward-minimizing if $P(F)=P(E)$ implies $E=F$ up to an $\cH^{n+1}$-null set.
\end{definition}
We note that the outward minimizing property is preserved by level set flow by \cite[Theorem 3.5]{Whi00}. We also have the following definition of approximate outward minimizing property.
\begin{definition} \label{def:almostoutwardmin}
    A set $E\subset\R^{n+1}$ of finite perimeter is $\eta$--outward--minimizing if for every bounded finite perimeter set $F \supset E$,
    $P(F) \geq P(E)-\eta$.
\end{definition}

Given a set $E$ of finite perimeter, consider the problem of minimizing perimeter among finite perimeter sets that contain $E$.
Existence of such a minimizer follows by compactness of BV functions, and it is immediate that the minimizers are outward-minimizing.
In addition, we know that there is a unique outermost minimizer up to a null set (see \cite{FM22}),
allowing us to give the following definition:
\begin{definition}
    The minimizing hull $E^*$ of a finite perimeter set $E$ is a strictly outward-minimizing bounded finite-perimeter set that contains $E$ and satisfies
    \[P(E^*)\leq P(F)\]
    for all $F\supset E$.
\end{definition}

\section{Time-slice analysis of elliptic regularization}\label{timeslicesection}

The goal of this section is to establish lemmas regarding the time-slices of the regularized flow that will be used in the proof of the main theorem.
Following the notation of the previous section, we denote by $H$ the mean curvature of the level set flow,
and $H^\eps$ the mean curvature of the translator $N_t^\eps\subset\R^{(n+1)+1}$.
Since $N_t^\eps$ with varying $t$ are all congruent, we will use the same notation $H^\eps$ for the mean curvature of $N_t^\eps$ for all $t\ge 0$.
Considering that $N_0^\eps$ is an elongated graph of $u^\eps$, we can regard $H^\eps$ as a function on $\overline\Omega$ by taking the composition of $H^\eps$ with the map $x\mapsto (x, u^\eps(x)/\eps)$.
With this identification, we have
\begin{equation}\label{eq:Hepsilon}
H^\eps=\frac{1}{\sqrt{\eps^2+|Du^\eps|^2}}.
\end{equation}

\begin{proposition}\label{cor:area-convergence}
    There exists a sequence $\eps_i \rightarrow 0$ such that for a.e. $t\ge 0$,
    \[
        \cH^n(\{u^{\eps_i}=t\})\to\cH^n(\{u=t\})
         \quad \text{as } i\to\infty.
    \]
\end{proposition}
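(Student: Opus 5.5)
Set $f_\eps(t):=\cH^n(\{u^\eps=t\})$ and $f(t):=\cH^n(\{u=t\})$. The plan is to prove convergence of these slice areas first in $L^1(0,T)$, for $T>\sup u$ larger than $\sup_\eps\sup u^\eps$ (possible by the uniform convergence in Lemma \ref{lem:elliptic-regularization-conv}(2)). Then a standard diagonal extraction gives a subsequence $\eps_i$ converging for a.e. $t$.

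\textbf{Step 1 (lower semicontinuity).} Since $u^\eps\to u$ uniformly, we have $\chi_{\{u^\eps>t\}}\to\chi_{\{u>t\}}$ in $L^1(\Omega)$ for every $t$ with $|\{u=t\}|=0$, which holds for all but countably many $t$. Lower semicontinuity of perimeter then gives $\liminf_{\eps\to0} P(\{u^\eps>t\})\ge P(\{u>t\})$ for a.e. $t$. By Sard's theorem a.e. $t$ is a regular value of $u^\eps$, so $P(\{u^\eps>t\})=f_\eps(t)$ there. By Remark \ref{rmk:boundary-equivalence}, $P(\{u>t\})=\cH^n(\partial^*K_t)=f(t)$ for a.e. $t$. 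Hence $\liminf_\eps f_\eps(t)\ge f(t)$ for a.e. $t$.

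\textbf{Step 2 (convergence of total integrals).} By the coarea formula, $\int_0^T f_\eps\,dt=\int_\Omega|Du^\eps|\,dx$ and $\int_0^T f\,dt=\int_\Omega|Du|\,dx$. Testing Lemma \ref{lem:elliptic-regularization-conv}(3) against the function $1\in L^1(\Omega)$, which is admissible since $\Omega$ is bounded, gives $\int_\Omega|Du^\eps|\to\int_\Omega|Du|$. So $\int_0^T f_\eps\to\int_0^T f$.

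\textbf{Step 3 (upgrade to $L^1$ convergence).} Let $g_\eps:=\min(f_\eps,f)$. Then $0\le g_\eps\le f\in L^1$, and $g_\eps\to f$ a.e. by Step 1. Dominated convergence gives $\int g_\eps\to\int f$. Since $|f_\eps-f|=f_\eps+f-2g_\eps$, integrating and using Step 2 yields $\int_0^T|f_\eps-f|\,dt\to0$. For $t\ge T$ both $f_\eps$ and $f$ vanish. Choosing any sequence $\eps_i\to0$ and passing to a subsequence along which $f_{\eps_i}\to f$ a.e. completes the proof.

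\textbf{Main obstacle.} The delicate point is Step 1: the liminf inequality must be valid for a.e. $t$ along an arbitrary sequence, not only after passing to a subsequence. This requires checking that the exceptional null sets of $t$ are countable unions indexed by $\eps_i$, which holds because the sequence is countable. A second point is identifying $f(t)$ with the perimeter of the superlevel set, which is exactly the content of Remark \ref{rmk:boundary-equivalence}. The reverse inequality never has to be proved pointwise; it comes for free from the weak-$*$ convergence in Lemma \ref{lem:elliptic-regularization-conv}(3) through Step 2.
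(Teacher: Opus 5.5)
Your proof is correct and follows essentially the same route as the paper. Both arguments use the co-area formula together with the weak-$*$ convergence of $|Du^\eps|$ to get convergence of the integrals, lower semicontinuity of perimeter (plus Remark \ref{rmk:boundary-equivalence}) to get the pointwise $\liminf$ inequality, an upgrade to $L^1$ convergence, and extraction of an a.e.-convergent subsequence. Your explicit $\min(f_\eps,f)$ argument and your bookkeeping of the countably many exceptional null sets fill in steps the paper leaves implicit. The paper also proves convergence on every subinterval $[a,b]$, which your argument shows is not needed.
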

\begin{proof}  
    By the co-area formula, for $0\le a<b$,
    \[ \int_a^b \cH^n(\{u^\eps=t\}) dt = \int_{\{a<u^\eps<b\}} |Du^\eps| dx,\]
    and
    \[ \int_a^b \cH^n(\{u=t\}) dt = \int_{\{a<u<b\}} |Du| dx.\]
    By Lemma \ref{lem:elliptic-regularization-conv},
    \[|Du^\eps| \overset{*}{\rightharpoonup} |Du| \quad \text{in } L^\infty(\Omega),\]
    and
    \[\chi_{\{a<u^\eps<b\}} \rightarrow \chi_{\{a<u<b\}} \quad \text{in } L^1(\Omega),\]
    because $u^{\eps_i}$ converges to $u$ uniformly and the level sets of $u$ have zero $\mathcal{L}^{n+1}$--measure.
    This implies that
    \[ \int_{\{a<u^\eps<b\}} |Du^\eps| dx \rightarrow 
    \int_{\{a<u<b\}} |Du| dx,\]
    and equivalently
    \[\int_a^b \cH^n(\{u^\eps=t\}) dt \rightarrow \int_a^b \cH^n(\{u=t\}) dt.\]
    Here, by lower semicontinuity of total variation for BV functions and Remark \ref{rmk:boundary-equivalence},
    \[\chi_{\{u^\eps>t\}} \rightarrow \chi_{\{u>t\}} \quad \text{in } L^1(\Omega)\]
    gives
    \[\cH^n(\{u=t\}) \leq \liminf_{\eps\to 0} \cH^n(\{u^\eps=t\})\]
    for a.e.-$t$. Therefore, $\cH^n(\{u^\eps=t\}) \rightarrow \cH^n(\{u=t\})$ in $L^1([0,T])$ for any $T>0$.
    We conclude the proof by extracting a subsequence $\eps_i \rightarrow 0$ such that
    \[\cH^n(\{u^{\eps_i}=t\})\to\cH^n(\{u=t\}) \quad \text{for a.e. } t\ge 0.\]
\end{proof}
Also, we have the following convergence of integrated mean curvature.
\begin{proposition}\label{lem:mean-curv-convergence}
    \[
        \int_{\{u^{\eps}=t\}} H^{\eps}d\mathcal H^{n}\,dt
        \rightharpoonup
        \int_{\{u=t\}} H d\mathcal H^n\,dt
    \]
    as measures in $t$.
\end{proposition}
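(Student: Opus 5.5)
Via the co-area formula, both sides become volume integrals over $\Omega$, so the statement reduces to a weak-* convergence of $L^\infty$ functions tested against characteristic functions of sublevel strips. Concretely, it suffices to show that for every $0\le a<b$,
\[
\int_a^b\int_{\{u^\eps=t\}} H^\eps\,d\cH^n\,dt\;\longrightarrow\;\int_a^b\int_{\{u=t\}} H\,d\cH^n\,dt .
\]
The total masses are uniformly bounded, since $H^\eps|Du^\eps|\le 1$ and $\Omega$ is bounded. Density of indicators of intervals, together with this uniform bound, then upgrades the interval statement to weak convergence of measures in $t$. Equivalently, one can test against continuous compactly supported $\varphi(t)$ and replace $\chi_{(a,b)}(u^\eps)$ below by $\varphi(u^\eps)$; this is perhaps cleaner.

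\textbf{Step 1 (left side).} By the co-area formula for the smooth function $u^\eps$ and by \eqref{eq:Hepsilon},
\[
\int_a^b\int_{\{u^\eps=t\}}H^\eps\,d\cH^n\,dt=\int_{\{a<u^\eps<b\}}H^\eps|Du^\eps|\,dx=\int_{\{a<u^\eps<b\}}\frac{|Du^\eps|}{\sqrt{\eps^2+|Du^\eps|^2}}\,dx .
\]

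\textbf{Step 2 (right side).} Away from the singular set $H=1/|Du|$, as in \eqref{eq:arrival}. Using Remark \ref{rmk:boundary-equivalence} and the co-area formula for the Lipschitz function $u$, I expect
\[
\int_a^b\int_{\{u=t\}}H\,d\cH^n\,dt=\int_{\{a<u<b\}}H|Du|\,dx=\mathcal L^{n+1}\bigl(\{a<u<b\}\cap\{Du\neq0\}\bigr).
\]
This step requires identifying the weak mean curvature $H$ of $\partial^*K_t$ from Lemma \ref{lem:translator-convergence} with $1/|Du|$ at $\cH^n$-a.e.\ point of a.e.\ slice where $Du\ne0$. I would justify this by the smooth-flow relation $H=1/|Du|$, or, more robustly, by passing to the limit in \eqref{eq:elliptic-reg} in divergence form: for a.e.\ $t$, the weak mean curvature of the slice is the normal component obtained from $\operatorname{div}(Du/|Du|)=-1/|Du|$, in the distributional sense given by Lemma \ref{lem:elliptic-regularization-conv}(4). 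The critical set $\{Du=0\}$ contributes nothing to either side of the co-area formula.

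\textbf{Step 3 (convergence).} The integrand $|Du^\eps|/\sqrt{\eps^2+|Du^\eps|^2}$ is exactly the modulus of the vector field in Lemma \ref{lem:elliptic-regularization-conv}(4). That field converges in $L^1$ to $Du/|Du|$, whose modulus is $\chi_{\{Du\ne0\}}$ almost everywhere, with the usual convention that it vanishes where $Du=0$. The integrands are bounded by $1$, and, as in Proposition \ref{cor:area-convergence}, $\chi_{\{a<u^\eps<b\}}\to\chi_{\{a<u<b\}}$ in $L^1$ by uniform convergence, because level sets of $u$ are Lebesgue null. Dominated convergence and a product-of-$L^1$/$L^\infty$ argument then give convergence of the integrals in Step 1 to the right side of Step 2.

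\textbf{Main obstacle.} Step 2 is the main obstacle: one must rigorously identify the weak mean curvature of a.e.\ slice with $1/|Du|$, i.e.\ $H|Du|=1$ a.e.\ on $\{Du\neq0\}$, and also confirm that $\{Du=0\}$ is consistent with Lemma \ref{lem:elliptic-regularization-conv}(4). If that identification is awkward, I would fall back on the Metzger--Schulze framework \cite{MS08}, where $\vec H=-\nu/|Du|$ on the regular part of a.e.\ slice is established directly for mean convex level set flow.
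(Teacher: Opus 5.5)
Your proposal is correct and follows essentially the same route as the paper: the co-area formula with \eqref{eq:Hepsilon} on the left, \cite[Lemma 4.4]{MS08} to rewrite the right side as $\int_\Omega\psi(u)\,dx$ for continuous test functions $\psi$, and Lemma \ref{lem:elliptic-regularization-conv} (uniform convergence of $u^\eps$ plus $L^1$ convergence of the normalized gradient fields) to pass to the limit. Your concern about $\{Du=0\}$ is vacuous, since for mean convex flow $|Du|>0$ a.e.\ \cite{MS08}; this fact is also what makes Lemma \ref{lem:elliptic-regularization-conv}(4) well defined and gives $\mathcal L^{n+1}(\{a<u<b\}\cap\{Du\neq0\})=\mathcal L^{n+1}(\{a<u<b\})$.
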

\begin{proof}
For $\psi \in C([0,T])$, since $H^\eps = \frac{1}{\sqrt{\eps^2+|Du^\eps|^2}}$ we have
\[
\int\psi(t)\int_{\{u^{\eps}=t\}} H^{\eps}d\mathcal H^{n}\,dt
= \int_{\Omega}\psi(u^{\eps})\frac{|Du^{\eps}|}{\sqrt{\eps^2+|Du^{\eps}|^2}} dx
\]
by the co-area formula. Similarly,
\[
\int\psi(t)\int_{\{u=t\}}Hd\mathcal H^{n}\,dt
= \int_{\Omega}\psi(u) dx
\]
by Lemma 4.4 of \cite{MS08}.
The convergence of the right hand side follows from Lemma \ref{lem:elliptic-regularization-conv}.

\end{proof}

Using the ellipticity of the regularized equation,
one can show that on $N^\eps \coloneq N^\eps_0$, the second fundamental form can be
bounded by its mean curvature uniformly in $0<\eps<\eps_0$.

\begin{lemma}%[Slice maximum principle for translators]
\label{trans slice mp}
Let \(M^n\subset \mathbb{R}^{n+1}\) be a strictly mean convex
translator whose velocity is a positive scalar multiple of
\(-e_{n+1}\). Then, the quantity $|A|/H$ cannot attain an interior local maximum unless it is constant, where $A$ denotes the second fundamental form and $H$ denotes the mean curvature.
In particular, if $M\cap\{x_{n+1}\ge h\}$ is compact, we have
\[
\sup_{x\in M\cap\{x_{n+1}\ge h\}}
\frac{|A|}{H}
=
\sup_{x\in M\cap\{x_{n+1}=h\}}
\frac{|A|}{H}.
\]
\end{lemma}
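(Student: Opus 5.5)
The plan is to view the translator as an eternal mean curvature flow, transfer Huisken's parabolic equations for $A$ and $H$ to elliptic equations on $M$, and then run the classical Huisken computation for the ratio $|A|^2/H^2$ with the strong maximum principle. Write the velocity as $-c\,e_{n+1}$ with $c>0$, so that $M_t=M-c\,t\,e_{n+1}$ solves mean curvature flow.

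First, along mean curvature flow one has $\partial_t h_{ij}=\Delta h_{ij}-2Hh_{ik}h_{kj}+|A|^2h_{ij}$ and $\partial_t H=\Delta H+|A|^2H$, or equivalently $(\partial_t-\Delta)|A|^2=-2|\nabla A|^2+2|A|^4$. For a translating solution the normal velocity is $H=-c\langle e_{n+1},\nu\rangle$. The time derivative of any geometric quantity at a fixed point of the moving hypersurface therefore equals its derivative along the tangential vector field $-c\,e_{n+1}^{\top}$, up to the usual tangential reparametrization. This gives the elliptic identities
\[
\mathcal L H+|A|^2H=0,\qquad \mathcal L|A|^2=2|\nabla A|^2-2|A|^4,
\]
where $\mathcal L:=\Delta+c\langle e_{n+1}^{\top},\nabla\,\cdot\,\rangle$ is the drift Laplacian. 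The sign of the drift is irrelevant for the maximum principle. Strict mean convexity $H>0$ makes $f:=|A|^2/H^2$ smooth.

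Next I would repeat Huisken's computation from the convex and mean convex setting with $\Delta$ replaced by $\mathcal L$. The reaction terms $\pm2|A|^4/H^2$ cancel, and one obtains
\[
\mathcal L f+\frac{2}{H}\langle\nabla H,\nabla f\rangle=-\frac{2}{H^4}\bigl|H\nabla_k h_{ij}-\nabla_kH\,h_{ij}\bigr|^2\le0 .
\]
Thus $f$ is a supersolution of a linear elliptic operator with smooth, locally bounded first-order coefficients and no zeroth-order term. The strong maximum principle (Hopf) then shows that $f$, and hence $|A|/H=\sqrt f$, cannot attain an interior local maximum unless it is constant on the connected component concerned. The only real work is this algebra, where one must check that the drift term passes through the quotient rule exactly like $\Delta$. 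This holds because $\mathcal L-\Delta$ is a first-order derivation.

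Finally, for the "in particular" statement, suppose $M\cap\{x_{n+1}\ge h\}$ is compact. Then the continuous function $|A|/H$ attains its maximum there. If the maximum is attained only at a point with $x_{n+1}>h$, that point is an interior local maximum on $M$, so $|A|/H$ is constant on the corresponding component. Since the region is compact and the translator moves downward, such a component must meet the slice $\{x_{n+1}=h\}$, and the maximum is then also attained on the slice. In either case the supremum over the region equals the supremum over the slice $M\cap\{x_{n+1}=h\}$. If the slice meets only some components, one applies the argument component by component; for the application to $N^\eps$, which is a connected graph over $\Omega$, this issue does not arise.
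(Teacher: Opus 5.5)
Your strategy is the paper's own: the same quantity $f=|A|^2/H^2$, the same operator $\mathcal L+\frac{2}{H}\langle\nabla H,\nabla\,\cdot\,\rangle$, and the strong maximum principle. The only difference is that you get the elliptic identities by transplanting Huisken's parabolic evolution equations, whereas the paper derives $\mathcal L_VA=-|A|^2A$ directly from $H=-\langle V,\nu\rangle$, Codazzi and Simons' identity.

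However, your decisive identity has the wrong sign, and with that sign the conclusion does not follow. The correct identity is
\[
\mathcal L f+\frac{2}{H}\langle\nabla H,\nabla f\rangle=\frac{2}{H^4}\bigl|H\nabla_kh_{ij}-\nabla_kH\,h_{ij}\bigr|^2\ge0 .
\]
You can check this from your own two identities. After the $|A|^4$ terms cancel, the left side equals $2H^{-2}|\nabla A|^2-2H^{-3}\langle\nabla|A|^2,\nabla H\rangle+2H^{-4}|A|^2|\nabla H|^2$, which is exactly the positive square.

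The sign is reversed relative to Huisken's lemma for the following reason. Along the normally parametrized flow $M_t=M+tV$ one has $\partial_tQ=-\langle V,\nabla Q\rangle$, so $\partial_t-\Delta$ becomes $-\mathcal L$. The right-hand side must therefore change sign together with the operator. Hence $f$ is a \emph{sub}solution, which is exactly what Hopf's strong maximum principle needs in order to exclude interior maxima. As you wrote it ($\le0$, ``supersolution''), Hopf only excludes interior \emph{minima}. For instance, $\Delta(-|x|^2)<0$, yet $-|x|^2$ has an interior maximum. So the step to ``no interior local maximum'' is invalid as stated. Once the sign is corrected, your argument coincides with the paper's $\bigl(\mathcal L_V+2\langle\nabla\log H,\nabla\,\cdot\,\rangle\bigr)q=2|\nabla(A/H)|^2\ge0$.

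There are two smaller corrections. First, the same formula $\partial_tQ=-\langle V,\nabla Q\rangle$ with $V=-c\,e_{n+1}$ shows that the drift is $\langle V,\nabla\,\cdot\,\rangle=-c\langle e_{n+1}^{\top},\nabla\,\cdot\,\rangle$. With your $+c$, the identities $\mathcal LH+|A|^2H=0$ and $\mathcal L|A|^2=2|\nabla A|^2-2|A|^4$ are false as stated. This is harmless for the maximum principle, but they should be written with the correct drift.

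Second, in the ``in particular'' step, ``the translator moves downward'' is not really a reason. The clean argument runs as follows. A component of $M$ lying in $\{x_{n+1}>h\}$ is closed in $M$ and contained in the compact set $M\cap\{x_{n+1}\ge h\}$, so it is a closed embedded hypersurface. By the divergence theorem, $\int\langle V,\nu\rangle\,d\mathcal H^n=0$ for the constant field $V$. The translator equation then forces $\int H\,d\mathcal H^n=0$, contradicting $H>0$. Hence every component on which the maximum is attained meets the slice $\{x_{n+1}=h\}$.
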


\begin{proof}
Let
\[
V=-\lambda e_{n+1},
\qquad \lambda>0,
\]
be the translating velocity. We use the convention that the mean curvature vector is
\[
\vec H=-H\nu.
\]
where $\nu$ is the unit normal with $\langle \nu,e_{n+1}\rangle > 0$.
Thus the translator equation \(\vec H=V^{\perp}\) becomes
\[
H=-\langle V,\nu\rangle.
\]
Since \(M\) is strictly mean convex, \(H>0\) on \(M\).

Define the drift Laplacian
\[
\mathcal{L}_{V}
:=
\Delta_{M}+\langle V,\nabla \,\cdot\,\rangle.
\]
We claim that
\[
\mathcal{L}_{V}A=-|A|^2A
\]
and by taking the trace
\[
\mathcal{L}_{V}H=-|A|^2H.
\]
Indeed, at a point where the chosen orthonormal frame is geodesic,
\[
\nabla_jH=-\overline\nabla_j\langle V, \nu\rangle=-A_{jk}V_k
\]
where $\overline\nabla$ denotes the Euclidean connection of the ambient space. Differentiating again,
\begin{align*}
\nabla_i\nabla_jH
&=-(\nabla_iA_{jk})V_k-A_{jk}\nabla_iV_k\\
&=
-V_k\nabla_k A_{ij}
-HA_{ik} A_{jk}.
\end{align*}
For the second line we have used the Codazzi identity $\nabla_iA_{jk}=\nabla_kA_{ij}$ and
\[\nabla_iV_k=\overline\nabla_i\langle V, \partial_k\rangle=-A_{ik}\langle V, \nu\rangle=HA_{ik}.\]
Combining this with the standard identity (see \cite[Lemma 2.1]{Hui84})
\[
\Delta_{M} A_{ij}
=
\nabla_i\nabla_jH
+HA_{ik} A_{kj}
-|A|^2A_{ij}
\]
yields the claimed equation for \(A\).

Now set
\[
B:=\frac{A}{H}.
\]
Using the two equations above and the product rule, we obtain
\[
\left(
\mathcal{L}_{V}
+
2\langle \nabla\log H,\nabla\,\cdot\,\rangle
\right)B=0.
\]
Consequently, for
\[
q:=|B|^2=\frac{|A|^2}{H^2},
\]
we have
\begin{align*}
\left(
\mathcal{L}_{V}
+
2\langle \nabla\log H,\nabla\,\cdot\,\rangle
\right)q
&=
2|\nabla B|^2 \\
&=
\frac{2}{H^4}
\left|
H\nabla A-A\otimes\nabla H
\right|^2
\ge 0.
\end{align*}

Thus \(q\) is subharmonic with respect to the elliptic operator
\[
\mathcal{L}
:=
\Delta_{M}
+\langle V,\nabla\,\cdot\,\rangle
+2\langle \nabla\log H,\nabla\,\cdot\,\rangle.
\]
Since \(H>0\), the coefficients of
\(\mathcal{L}\) are smooth and bounded on compact subsets of $M$. Therefore, the
strong maximum principle applies, and \(q\) cannot attain a local maximum in the interior of \(M\), unless $q$ is constant. In particular, for any \(h\in\mathbb{R}\),
\[
\sup_{x\in M\cap\{x_{n+1}\ge h\}}
\frac{|A|}{H}
=
\sup_{x\in M\cap\{x_{n+1}=h\}}
\frac{|A|}{H}.
\]
\end{proof}

\begin{lemma} \label{pinching}
Let $\Omega$ be a bounded domain with smooth, strictly mean convex boundary, and $\eps_i>0$ be a sequence of numbers decreasing to $0$.
Given $\tau>0$ and $\alpha<0$,
there are large $C=C(\tau)>0$ and large $N(\tau,\alpha)\in\mathbb{N}$ such that for $t\geq\tau$ and $i\geq N(\tau,\alpha)$,
\[
    |A^{\eps_i}| \leq C H^{\eps_i} \quad \text{on} \quad N_t^{\eps_i}\cap(\Omega\times [\alpha,\infty)), 
\]
where $A^{\eps_i}$ and  $H^{\eps_i}$ denote the second fundamental form and mean curvature, respectively, for $N^{\eps_i}_t$.
\end{lemma}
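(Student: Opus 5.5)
The plan is to use the translating structure and Lemma \ref{trans slice mp} to push the estimate down to one fixed slice close to the initial boundary. There, uniform smooth control of the elliptic regularization will be available.

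\textbf{Reduction to a single slice.} Since $N_t^\eps=N_0^\eps-\frac{t}{\eps}e_{n+2}$, we have
\[
N_t^{\eps}\cap(\Omega\times[\alpha,\infty)) = \Big(N_0^\eps\cap\{x_{n+2}\ge \alpha+\tfrac t\eps\}\Big)-\tfrac t\eps e_{n+2}.
\]
The set on the right is the graph of $u^\eps/\eps$ over $\{u^\eps\ge t+\eps\alpha\}$. I would fix a small $\delta\in(0,\tau)$, to be chosen below. For $i\ge N(\tau,\alpha)$ with $\eps_i|\alpha|\le\tau-\delta$, and for all $t\ge\tau$, this region lies inside the graph of $u^{\eps_i}/\eps_i$ over $\{u^{\eps_i}\ge\delta\}$.

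Because $u^\eps=0$ on $\partial\Omega$, the set $\{u^\eps\ge\delta\}$ is a compact subset of $\Omega$. Hence $N_0^\eps\cap\{x_{n+2}\ge\delta/\eps\}$ is compact. The translator $N_0^\eps$ is strictly mean convex, since $H^\eps=(\eps^2+|Du^\eps|^2)^{-1/2}>0$, and it moves in the direction $-e_{n+2}$. Lemma \ref{trans slice mp} with $h=\delta/\eps$ then gives
\[
\sup_{N_t^{\eps_i}\cap(\Omega\times[\alpha,\infty))}\frac{|A^{\eps_i}|}{H^{\eps_i}}\;\le\;\sup_{\{u^{\eps_i}=\delta\}}\frac{|A^{\eps_i}|}{H^{\eps_i}} .
\]
Here the right-hand side is evaluated on the graph over the level set $\{u^{\eps_i}=\delta\}$.

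\textbf{Uniform control on the slice.} It remains to bound $|A^\eps|/H^\eps$ on the graph over $\{u^\eps=\delta\}$, uniformly in small $\eps$, for one fixed small $\delta$. The upper bound $|Du^\eps|\le C$ from the boundary barrier gives $H^\eps\ge c>0$, so it suffices to bound $|A^\eps|$ there. I would choose $\delta$ so that the level set flow of the strictly mean convex, smooth $\partial\Omega$ is smooth, with multiplicity one, on $[0,3\delta]$. Then $\{0\le u\le 2\delta\}$ is a smooth collar on which $|Du|\ge c_0>0$ and $u$ is smooth.

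The translators $N^{\eps}_t$ form Brakke flows converging (Lemma \ref{lem:translator-convergence}, or Ilmanen's elliptic regularization compactness) to $\partial K_t\times\R$. Near the relevant region this limit is a smooth multiplicity-one flow. White's local regularity theorem (or Brakke's) then upgrades the convergence to smooth convergence on compact subsets of that space-time region. Translating back, this yields uniform $C^2$ bounds for $u^\eps$, together with $|Du^\eps|\ge c_0/2$, on $\{\delta/2\le u^\eps\le 3\delta/2\}$ for small $\eps$. It is also important that the region is chosen compactly inside the smooth collar, away from $\partial\Omega$ where boundary effects enter.

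Alternatively, once $|Du^\eps|\ge c_0/2$ is known on this collar, the regularized equation is uniformly elliptic there uniformly in $\eps$. Schauder theory then gives the $C^2$ bounds directly. In either route, the second fundamental form of the graph is controlled by $|D^2u^\eps|/\eps$ times geometric factors. Since the graph of $u^\eps/\eps$ is nearly vertical, $|A^\eps|$ is comparable to the curvature of the level set plus terms of size $\lesssim |D^2u^\eps|\,|Du^\eps|^{-1}$. This is bounded, so $|A^\eps|\le C$ and the lemma follows with $C=C(\tau)$, through $\delta<\tau$.

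\textbf{Main obstacle.} The delicate step is the uniform-in-$\eps$ $C^2$ control near the level $\{u^\eps=\delta\}$. The needed lower gradient bound does not come from a maximum principle for $H^\eps$: $\mathcal L_VH=-|A|^2H$ only gives a minimum principle. I would first try the local regularity route (smooth multiplicity-one limit plus White's regularity theorem, giving smooth convergence). As a fallback I would use a boundary barrier, giving $|Du^\eps|\ge c$ on $\partial\Omega$, combined with comparison against the smooth short-time flow to propagate the gradient lower bound into the collar.
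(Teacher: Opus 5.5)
Your primary route is essentially the paper's proof. The paper fixes an early time $t_0<\tau/2$ at which the level set flow is smooth. It uses Lemma~\ref{lem:translator-convergence} together with Brakke's local regularity to get $C^\infty_{\mathrm{loc}}$ convergence $N^{\eps_i}_{t_0}\to\partial^*K_{t_0}\times\R$. That convergence bounds $|A^{\eps_i}|/H^{\eps_i}$ on the slice directly, so your detour through $C^2$ bounds on $u^{\eps}$ is unnecessary. The paper then propagates the bound to $\{x_{n+2}\ge\alpha\}$ by translation and Lemma~\ref{trans slice mp}, exactly as in your reduction step.

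Discard the Schauder ``alternative'' and its barrier fallback, however. The regularized equation reads $\bigl(\delta_{ij}-\frac{u_iu_j}{\eps^2+|Du|^2}\bigr)u_{ij}=-1$, whose eigenvalue in the $Du$ direction is $\eps^2/(\eps^2+|Du|^2)$. So even with $|Du^\eps|\ge c_0/2$ the ellipticity ratio degenerates like $\eps^2$, and elliptic theory gives no $\eps$-uniform $C^2$ bound.
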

\begin{proof}

Let $\{K_t\}_{t\geq0}$ be the level set flow with $K_0=\Omega$. Since \(\partial\Omega\) is smooth,
for a short time the evolution of $\partial\Omega$ given by $\{\partial^*K_t\}_{t\geq0}$ is a smooth mean curvature flow. Therefore, by the convergence given in Lemma \ref{lem:translator-convergence} and the Brakke regularity theorem (see \cite[Section 6]{Bra78} and \cite[Section 12]{Ilm94}), there is $0<t_0<\frac{\tau}{2}$ such that 
\[
N_{t_0}^{\eps_i} \rightarrow \partial^*K_{t_0}\times \R \quad \text{in} \ C^\infty_\text{loc}.
\]
Hence, there is a large enough $N(\tau,\alpha)\in\mathbb{N}$ such that for $i\geq N(\tau,\alpha)$,
\[\eps_i<\frac{\tau}{-2\alpha}\quad \text{and}\quad
\sup_{ N^{\eps_i}_{t_0}\cap(\Omega\times\{0\})}
\frac{|A^{\eps_i}|}{H^{\eps_i}}
<
\sup_{\partial^*K_{t_0}}
    \frac{|A_{\partial^*K_{t_0}}|}{H_{\partial^*K_{t_0}}}
+
1=:C(\tau).
\]
Note that since $\partial\Omega$ is mean convex, $\partial^*K_{t_0}$ is a strictly mean convex smooth hypersurface.
Above constant is therefore finite. Here, an important observation is that for $t\geq\tau$, $N^{\eps_i}_t$ is a translate of $N^{\eps_i}_{t_0}$, and
the slice $N_{t_0}^{\eps_i}\cap(\Omega\times\{0\})$ corresponds to $N^{\eps_i}_t\cap(\Omega\times\{z(i,t)\})$ with
\[ z(i,t):=-\frac{t-t_0}{\eps_i}<-\frac{\tau}{2\eps_i}<\alpha\quad\text{for}\;i\geq N(\tau,\alpha).\]
Therefore by Lemma \ref{trans slice mp}, for $t\geq\tau$ and $i\geq N(\tau,\alpha)$,
\begin{align*}
&\sup_{N^{\eps_i}_t\cap(\Omega\times [\alpha,\infty))}
\frac{|A^{\eps_i}|}{H^{\eps_i}}
\le
\sup_{N^{\eps_i}_t\cap(\Omega\times [z(i,t),\infty))}
\frac{|A^{\eps_i}|}{H^{\eps_i}}
\le
\sup_{N^{\eps_i}_t\cap(\Omega\times \{z(i,t)\})}
\frac{|A^{\eps_i}|}{H^{\eps_i}}\\
&=\sup_{ N^{\eps_i}_{t_0}\cap(\Omega\times\{0\})}
\frac{|A^{\eps_i}|}{H^{\eps_i}}<C(\tau).    
\end{align*}

This proves the desired estimate.
\end{proof}

\section{Approximate Minkowski's inequality for almost every slice}\label{almostminkowski}

\begin{lemma}[Relaxed Minkowski's inequality]\label{lem:relaxed-minkowski}
    Suppose $\eps>0$ is given, and assume that $\Omega\in\R^{n+1}$ is a bounded, open $\eta$--outward--minimizing set (see Definition \ref{def:almostoutwardmin}) with smooth, strictly mean convex boundary. For a.e. $s>0$, $\Sigma_s^\eps = \{u^\eps=s\}$ satisfies 
    \begin{equation}\label{eq:relaxed-minkowski}
\int_{\Sigma_s^\eps} |H_s^\eps| \geq n|\mathbb{S}^n|^{\frac{1}{n}}\left(|\Sigma_s^\eps|-\delta_s^\eps-\eta\right)^{\frac{n-1}{n}}_+,
\end{equation}
where $H^{\eps}_s$ denotes the mean curvature of $\Sigma^{\eps}_s$,
$\delta_s^\eps\geq 0$, and 
\begin{equation}\label{eq:delta-bound}
\int_{\mathbb R}\delta_{s}^\eps\,ds
\leq
\eps|\Omega|.
\end{equation}

\end{lemma}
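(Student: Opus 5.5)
The idea is to combine the extended Minkowski inequality of \cite[Theorem~1.1]{AFM22} with a calibration argument that bounds the perimeter of the minimizing hull of the superlevel sets of $u^\eps$ from below.

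\textbf{Reduction.} Fix $\eps>0$. By Sard's theorem, a.e. $s\in(0,\sup u^\eps)$ is a regular value of the smooth function $u^\eps$. For such $s$, the set $E_s:=\{u^\eps>s\}\Subset\Omega$ is a bounded open set with smooth boundary $\Sigma_s^\eps$. For $s$ outside this range, $\Sigma^\eps_s$ is empty and we set $\delta^\eps_s=0$.

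For regular $s$, \cite[Theorem~1.1]{AFM22} gives
\[
\int_{\Sigma_s^\eps}|H_s^\eps|\ \ge\ n|\mathbb S^n|^{\frac1n}\,P(E_s^*)^{\frac{n-1}{n}},
\]
where $E_s^*$ is the minimizing hull. Since $t\mapsto t_+^{(n-1)/n}$ is monotone, it suffices to show
\[
P(F)\ \ge\ |\Sigma^\eps_s|-\delta_s^\eps-\eta
\]
for every bounded finite perimeter set $F\supset E_s$, with
\[
\delta_s^\eps:=\int_{\Sigma_s^\eps}\Big(1-\frac{|Du^\eps|}{\sqrt{\eps^2+|Du^\eps|^2}}\Big)\,d\cH^n\ \ge0 .
\]

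\textbf{Step 1: localize to $\Omega$.} We use the $\eta$--outward--minimizing property of $\Omega$ and submodularity of perimeter:
\[
P(F\cap\Omega)+P(F\cup\Omega)\le P(F)+P(\Omega)\quad\text{and}\quad P(F\cup\Omega)\ge P(\Omega)-\eta .
\]
Together these give $P(F\cap\Omega)\le P(F)+\eta$. So we may replace $F$ by $G:=F\cap\Omega$, at the cost of $\eta$, and need only prove $P(G)\ge |\Sigma_s^\eps|-\delta_s^\eps$.

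\textbf{Step 2: calibration.} Let $X:=Du^\eps/\sqrt{\eps^2+|Du^\eps|^2}$. This is a smooth vector field on $\overline\Omega$ with $|X|<1$. By \eqref{eq:elliptic-reg} and \eqref{eq:Hepsilon}, $\operatorname{div}(-X)=H^\eps>0$.

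Apply the divergence theorem for sets of finite perimeter to $-X$ on $G\setminus \overline{E_s}$. On $\Sigma_s^\eps$ the outer normal of this region is $Du^\eps/|Du^\eps|$, because $u^\eps$ increases into $E_s$. The flux of $-X$ through $\Sigma^\eps_s$ is therefore $-\int_{\Sigma_s^\eps}|Du^\eps|/\sqrt{\eps^2+|Du^\eps|^2}$. On the remaining boundary $\partial^*G\setminus\Sigma^\eps_s$, the flux is at most $P(G)$ since $|X|\le1$. Hence
\[
0\le\int_{G\setminus E_s}H^\eps\,dx\ \le\ P(G)-\int_{\Sigma_s^\eps}\frac{|Du^\eps|}{\sqrt{\eps^2+|Du^\eps|^2}}\,d\cH^n=P(G)-|\Sigma_s^\eps|+\delta_s^\eps ,
\]
which is the desired bound.

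\textbf{Step 3: estimate of $\delta$.} By the co-area formula,
\[
\int_{\mathbb R}\delta_s^\eps\,ds=\int_\Omega |Du^\eps|\Big(1-\frac{|Du^\eps|}{\sqrt{\eps^2+|Du^\eps|^2}}\Big)dx .
\]
The elementary inequality $\sqrt{\eps^2+a^2}-a\le\eps$ gives
\[
a\Big(1-\frac{a}{\sqrt{\eps^2+a^2}}\Big)\le \frac{\eps a}{\sqrt{\eps^2+a^2}}\le\eps ,
\]
so the integral is at most $\eps|\Omega|$, which is \eqref{eq:delta-bound}.

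\textbf{Main technical point.} I expect the delicate step to be the rigorous justification of Step 2. This requires the Gauss--Green formula for the finite perimeter set $G\setminus\overline{E_s}$, whose reduced boundary may meet $\partial\Omega$ and $\Sigma^\eps_s$ in overlapping pieces. It should follow from $X\in C^1(\overline\Omega)$ and the standard De Giorgi structure theory, after checking that $\partial^*G\cap\Sigma_s^\eps$ contributes with the correct sign; there, $|X\cdot\nu|\le1$ suffices. A secondary point is confirming that the hypotheses of \cite{AFM22} (smooth bounded boundary) hold at every regular value $s$.
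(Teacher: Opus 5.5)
Your proposal is correct and is essentially the paper's proof. It uses the same calibrating field $-Du^\eps/\sqrt{\eps^2+|Du^\eps|^2}$ on $F\setminus E^\eps_s$ with $F\subset\Omega$, the same submodularity plus $\eta$--outward--minimizing step to pass from $E_s^{\eps*}\cap\Omega$ to $E_s^{\eps*}$, \cite[Theorem~1.1]{AFM22} applied to the hull, and the same co-area bound for $\delta_s^\eps$; the only difference is that you do the localization before the calibration. Two small remarks: the overlap worry in Step 2 disappears if you apply Gauss--Green to $G$ and to $E_s$ separately and subtract (using $E_s\subset G$), and \cite{AFM22} only covers $n\ge 2$, so for $n=1$ you should instead invoke Fenchel's inequality $\int_{\Sigma^\eps_s}|\kappa|\ge 2\pi$, as the paper does.
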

\begin{proof}
By Sard's theorem, for a.e. $s>0$, $\Sigma_s^\eps$ is a smooth hypersurface. For such $s$, we consider the vector field
\[ X^\eps = -\frac{Du^\eps}{\sqrt{\eps^2+|Du^\eps|^2}}.\]
This is a vector field that is smooth up to the boundary.
We have $|X^\eps| \leq 1$ and \[\operatorname{div}(X^\eps) = \frac{1}{\sqrt{\eps^2+|Du^\eps|^2}}>0.\]
Let $E_s^\eps = \{u^\eps > s\}$, and let $F$ be a set with finite perimeter such that $E_s^\eps \subset F \subset \Omega$.
If we let $\nu^\eps=-Du^\eps/|Du^\eps|$ be the outward unit normal of $E_s^\eps$ and $\nu_F$ be the unit normal of $F$,
by extending $X^\eps$ smoothly to a neighborhood of $\overline \Omega$ and applying the divergence theorem we obtain
\begin{align}\label{eq:divergence}
    0\leq\int_{F\setminus E_s^\eps} \operatorname{div}(X^\eps) &= \int_{\partial^* F} \langle X^\eps, \nu_F\rangle - \int_{\Sigma_s^\eps} \langle X^\eps, \nu^{\eps}\rangle \\
     &\leq P(F) - \int_{\Sigma_s^\eps} \frac{|Du^\eps|}{\sqrt{\eps^2+|Du^\eps|^2}}.
\end{align}
Therefore, we have
\be\label{eq:perimeter-bound} P(E_s^\eps) \leq P(F) + \delta_s^\eps.\ee
with
\[\delta_s^\eps =\int_{\Sigma_s^\eps} \left(1-\frac{|Du^\eps|}{\sqrt{\eps^2+|Du^\eps|^2}}\right) \geq 0\]
Let \(E_{s}^{\eps *}\) be the minimizing hull of \(E_{s}^\eps\).
Since
\[
E_{s}^\eps\subset E_{s}^{\eps *}\cap\Omega,
\]
by \eqref{eq:perimeter-bound} we have
\[ P(E_s^\eps) \leq P(E_{s}^{\eps *}\cap\Omega) + \delta_s^\eps.\]

By a standard inequality for perimeters, (cf. \cite[Lemma 12.22]{Mag12}), 
\begin{equation}\label{eq:submodularity}
P(E_{s}^{\eps *}\cap\Omega)
+
P(E_{s}^{\eps *}\cup\Omega)
\leq
P(E_{s}^{\eps *})
+
P(\Omega).
\end{equation}
Since $E_{s}^{\eps *}\cup\Omega
\supset\Omega$, $\eta$--outward-minimizing property of $\Omega$ gives
\[
P(\Omega)
\leq
P(E_{s}^{\eps *}\cup\Omega)
+\eta
\]
Substituting this into \eqref{eq:submodularity}, we obtain
\[
P(E_{s}^{\eps *}\cap\Omega)
\leq
P(E_{s}^{\eps *})+\eta.
\]
Together with \eqref{eq:perimeter-bound} and Remark \ref{rmk:boundary-equivalence}, this yields
\begin{equation}\label{eq:hull-lower-bound}
P(E_{s}^{\eps *})
\geq P(E_{s}^\eps)-\delta_{s}^\eps-\eta
= |\Sigma_{s}^\eps|-\delta_{s}^\eps-\eta.
\end{equation}

Applying the extended Minkowski inequality
\cite[Theorem~1.1]{AFM22}   on $E_{s}^{\eps *}$ ($n\geq 2$) and Fenchel's total absolute curvature inequality $\int_{\Sigma_{s}^{\varepsilon}}|\kappa| ds\geq 2\pi$ ($n=1$), together with
\eqref{eq:hull-lower-bound}, we prove \eqref{eq:relaxed-minkowski}: 
\begin{align*}
\int_{\Sigma^{\eps}_{s}}
|H_{s}^\eps|\,d\mathcal H^n
&\geq
n|{\mathbb{S}^n}|^{1/n}
P(E_{s}^{\eps *})^{\frac{n-1}{n}} \geq
n|{\mathbb{S}^n}|^{1/n}
\left(
|\Sigma_{s}^\eps|
-\delta_{s}^\eps
-\eta
\right)^{\frac{n-1}{n}}_+.
\end{align*}

To show \eqref{eq:delta-bound}, we integrate $\delta_s^\eps$ in $s$ and apply the co-area formula:
\begin{align}
    \int_{-\infty}^\infty \delta_s^\eps ds &= \int_{-T}^T \int_{\Sigma_s^\eps} \left(1-\frac{|Du^\eps|}{\sqrt{\eps^2+|Du^\eps|^2}}\right) d\mathcal H^n ds \\
    &= \int_\Omega \left(1-\frac{|Du^\eps|}{\sqrt{\eps^2+|Du^\eps|^2}}\right) |Du^\eps| dx \\
    &\leq \int_\Omega \eps dx = \eps|\Omega|.
\end{align}
\end{proof}

\begin{lemma}\label{strong slice conv}
Let $\Omega$ be a bounded domain with smooth, strictly mean convex boundary, and $0<\eps_i\rightarrow+0$. Given $\varphi\in C_c((0,T_{\mathrm{ext}}))$, we have
\[
\lim_{i\rightarrow\infty}\int \varphi(s)\int_{\Sigma_s^{\eps_i}}
\bigl(H^{\eps_i}-|H_s^{\eps_i}|\bigr)\,d\mathcal H^n\,ds=0,
\]
\end{lemma}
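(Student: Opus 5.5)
The plan is to reduce the statement to an identity that compares the translator curvature $H^{\eps}$ with the slice curvature $H^{\eps}_s$, and then to show that the error term vanishes after integrating in $s$ against $\varphi$.

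\textbf{Step 1: the pointwise identity.} Write $g^\eps:=|Du^\eps|/\sqrt{\eps^2+|Du^\eps|^2}$ and $\nu^\eps=-Du^\eps/|Du^\eps|$. Then
\[
-\frac{Du^\eps}{\sqrt{\eps^2+|Du^\eps|^2}}=g^\eps\nu^\eps ,
\]
and taking divergences gives
\[
H^\eps=g^\eps H_s^\eps+\langle\nu^\eps,\nabla g^\eps\rangle .
\]
Geometrically, let $e$ be the unit tangent of $N^\eps$ orthogonal to the horizontal slice, and let $v=\eps/\sqrt{\eps^2+|Du^\eps|^2}$ be the vertical component of the normal of $N^\eps$, so that $(g^\eps)^2=1-v^2$. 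Then the second fundamental form of $\Sigma^\eps_s$ is the restriction of $A^\eps$ divided by $g^\eps$, and
\[
g^\eps H^\eps_s=H^\eps-A^\eps(e,e),\qquad A^\eps(e,e)=\frac{\nabla_e v}{\sqrt{1-v^2}} .
\]
Fix $\tau>0$ with $\supp\varphi\subset[\tau,T]$. On the region that projects onto $\{u^\eps\ge\tau\}$, Lemma \ref{pinching} gives $|A^{\eps_i}(e,e)|\le C(\tau)H^{\eps_i}$ for $i$ large. Hence $H^\eps_s\ge g^\eps H^\eps_s=H^\eps-A^\eps(e,e)\ge(1-C)H^\eps$, so $H^\eps_s$ is bounded below by a fixed multiple of $H^\eps$, $|H^\eps_s|\le C'H^\eps/g^\eps$, and the regime where $H^\eps_s<0$ is controlled by $A^\eps(e,e)$.

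\textbf{Step 2: decomposition into error terms.} Write
\[
H^\eps-|H_s^\eps|=(1-g^\eps)\,\frac{-|H_s^\eps|}{1}\cdot\frac{1}{1}+\bigl(H^\eps-g^\eps|H^\eps_s|\bigr)-(1-g^\eps)|H^\eps_s|,
\]
that is, $H^\eps-|H^\eps_s|=\bigl(H^\eps-g^\eps|H^\eps_s|\bigr)-(1-g^\eps)|H^\eps_s|$. The first bracket is bounded in absolute value by $|A^\eps(e,e)|$, up to the set where $H^\eps_s<0$, which is again controlled by $|A^\eps(e,e)|$. After multiplying by $\varphi(s)$, integrating over $\Sigma^\eps_s$ and in $s$, the co-area formula turns everything into integrals over $\Omega$ with weight $\varphi(u^\eps)|Du^\eps|$.

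For the $(1-g^\eps)$ term, use $|H^\eps_s|\le C'H^\eps/g^\eps$ together with $H^\eps|Du^\eps|/g^\eps=1$. This bounds it by
\[
C'\int_\Omega|\varphi(u^\eps)|\,(1-g^\eps)\,dx .
\]
This tends to zero because $g^\eps\to1$ in $L^1$ wherever $|Du|>0$, by Lemma \ref{lem:elliptic-regularization-conv}(4), and $|Du|>0$ a.e. on the level sets relevant for a.e. $t$. An alternative bound is $\int_\Omega(1-g^\eps)|Du^\eps|\le\eps|\Omega|$, as in \eqref{eq:delta-bound}, combined with the weak-$*$ convergence of $|Du^\eps|$.

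\textbf{Step 3: the main obstacle, the term with $A^\eps(e,e)$.} Here the pinching bound alone only gives $O(1)$, so I plan to use the structure $A^\eps(e,e)=\nabla_e v/\sqrt{1-v^2}=\nabla_e\arcsin v$. The direction $e$ is, up to sign, the normalized gradient of the height function on $N^\eps$. Since all $N^\eps_t$ are vertical translates of each other, moving in height by $z$ corresponds to moving in $s$ by $-\eps z$. Applying the co-area formula on $N^\eps$ in the height variable, the integral
\[
\int\varphi(s)\int_{\Sigma_s^\eps}\frac{\nabla_e v}{\sqrt{1-v^2}}\cdots
\]
should become an exact derivative in $s$ of $\int_{\Sigma^\eps_s}\arcsin v$, multiplied by a bounded factor. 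Integrating by parts moves the derivative onto $\varphi$, which leaves
\[
\Bigl|\int\varphi'(s)\int_{\Sigma^\eps_s}\arcsin v\,d\mathcal H^n\,ds\Bigr|\lesssim\|\varphi'\|_\infty\int_\Omega v\,|Du^\eps|\,dx\le\|\varphi'\|_\infty\,\eps|\Omega| .
\]
This requires $\varphi\in C^1$, which I obtain by mollification, controlling the difference by $\|\varphi-\varphi_\delta\|_\infty$ times the uniform bound from Step 1.

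The delicate points are bookkeeping the Jacobian factors between the slice measure and the height co-area formula, which are powers of $g^\eps$ and $v$, and checking that only the combination $\nabla_e(\text{function of }v)$ appears. If exact integration by parts fails, I would fall back on Cauchy--Schwarz. The idea would be to bound $\int|A^\eps(e,e)|$ by $\bigl(\int|\nabla v|^2/H\bigr)^{1/2}\bigl(\int H\bigr)^{1/2}$, and to control $\int|\nabla v|^2$ through the drift-Laplacian equation $\mathcal L_V v=-|A|^2v$ satisfied by the normal component. This would yield a bound of order $\sqrt\eps$.
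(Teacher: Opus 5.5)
There is a genuine gap, and it is in Step 3.

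Steps 1 and 2 are essentially sound. The identity $g^\eps H^\eps_s=H^\eps-A^\eps(e,e)$ is correct. Together with Lemma \ref{pinching} and $H^\eps|Du^\eps|/g^\eps=1$, it gives the uniform bound $|\varphi(u^{\eps_i})|\,(H^{\eps_i}+|H^{\eps_i}_s|)\,|Du^{\eps_i}|\le C$ a.e.\ in $\Omega$. Your intermediate claim $H^\eps_s\ge g^\eps H^\eps_s$ fails where $H^\eps_s<0$, but it is not needed. The $(1-g^\eps)$ term also vanishes, by Lemma \ref{lem:elliptic-regularization-conv}(4) and $|Du|>0$ a.e.

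The trouble is the $A^\eps(e,e)$ term, for three reasons.

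\emph{Sign.} Integration by parts exploits cancellation, so at best it shows that the \emph{signed} integral $\int\varphi(s)\int_{\Sigma^\eps_s}(H^\eps-H^\eps_s)\,d\mathcal H^n\,ds$ tends to zero. The statement, however, is about $|H^\eps_s|$. On $\{H^\eps_s<0\}$, where $A^\eps(e,e)>H^\eps$, the integrand equals $2H^\eps-A^\eps(e,e)$. Making this contribution small requires $\int\varphi\int|A^\eps(e,e)|\to0$, which no cancellation argument supplies. This is exactly the part used later: the signed version only gives $\int\varphi\int H^{\eps_i}\le\int\varphi\int|H^{\eps_i}_s|+o(1)$, the wrong direction for Proposition \ref{thm:ae-minkowski}.

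\emph{The derivative is not exact.} The co-area formula on $N^\eps$ gives, up to sign conventions, $\eps\,\frac{d}{ds}\int_{\Sigma^\eps_s}\arcsin v=\int_{\Sigma^\eps_s}\bigl(A^\eps(e,e)+\arcsin(v)\,\operatorname{div}_{N^\eps}e\bigr)/g^\eps$, where $\operatorname{div}_{N^\eps}e=v\,(A^\eps(e,e)-H^\eps)/g^\eps$. After pulling back to $\Omega$, the extra term is controlled only by a multiple of $\int_\Omega|\varphi(u^\eps)|\,v^2/g^\eps\,dx$. Its factor $1/g^\eps$ blows up where $|Du^\eps|\ll\eps$, and you do not estimate it.

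\emph{The fallback is unsupported.} The Cauchy--Schwarz route is asserted rather than derived: no bound on $\int|\nabla v|^2/H^\eps$ over the relevant part of $N^\eps$ is actually obtained from the drift equation.

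The missing idea is pointwise rather than integral. By White's estimate on the singular set of mean convex flows (as used by Metzger--Schulze), the singular set projects to an $\mathcal H^n$-null, hence Lebesgue-null, set $S\subset\Omega$. For $x\in\Omega\setminus S$ the level set flow is smooth near $(x,u(x))$. Lemma \ref{lem:translator-convergence} together with Brakke's local regularity theorem then gives smooth local convergence $N^{\eps_i}_t\to\Sigma_t\times\R$ near $x$. Hence $H^{\eps_i}(x)$ and $H^{\eps_i}_{u^{\eps_i}(x)}(x)$ converge to the same limit, the mean curvature of the smooth flow at $x$. So the integrand $\varphi(u^{\eps_i})\bigl(H^{\eps_i}-|H^{\eps_i}_{u^{\eps_i}}|\bigr)|Du^{\eps_i}|$ in the co-area representation tends to $0$ a.e.\ in $\Omega$. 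The bound you already have from Step 1 then lets you finish by dominated convergence. This is the paper's argument, and with it your Steps 2 and 3 become unnecessary.
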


\begin{proof}
First, by the co-area formula, for $\eps>0$ we have
\begin{align}\label{eq:mean-curvature-co-area}
    \int \varphi(s)\int_{\Sigma_s^\eps}
\bigl(H^\eps-|H_s^\eps|\bigr)\,d\mathcal H^n\,ds\!=\!\int_{\Omega} \varphi(u^{\eps}(x))
\bigl(H^{\eps}(x)-|H_{u^{\eps}(x)}^\eps(x)|\bigr)|Du^{\eps}(x)|dx.
\end{align}

We denote the singular set of the level set flow $\tilde S \subset \operatorname{graph}u$.
From \cite{Whi00}, we know that the parabolic Hausdorff dimension of $\tilde S$ is at most $n-1$.
As in \cite[Lemma 3.3]{MS08}, the projection $S=\Pi(\tilde S)$ of $\tilde S$ onto $\Omega$ is a $\cH^n$-null set.
We claim the pointwise convergence:
\[\lim_{i\rightarrow\infty} \bigl(H^{\eps_i}(x)-H_{u^{\eps_i}(x)}^{\eps_i}(x)\bigr)=0\;\text{for}\; x\in\Omega\setminus S.\]

Fix $x_0\in\Omega\setminus S.$ Since $x_0\in\Omega\setminus S,$
in a spacetime neighborhood of $(x_0,u(x_0))\in\operatorname{graph}u$,
the level set flow is a smooth mean curvature flow. Therefore,
it follows from Lemma \ref{lem:translator-convergence} and the Brakke regularity theorem (cf. \cite[Section 6]{Bra78}, \cite[Section 12]{Ilm94})
that there is an open neighborhood $U\subset\Omega$ of $x_0$ such that
$N_t^{\eps_i}$ converges to $\Sigma_t\times\R$ smoothly
in any compact subset of $(\Sigma_t \cap U) \times \R$.
Therefore, both $H^{\eps_i}(x_0)$ and $H_{u^{\eps_i}(x_0)}^{\eps_i}(x_0)$ converge to the same limit,
which is the mean curvature of the smooth mean curvature flow at $x_0$.
This proves the claim. 

We note that $|Du^{\eps_i}|$ is bounded in $L^\infty(\Omega)$
by Lemma \ref{lem:elliptic-regularization-conv}-(3) and the uniform boundedness principle. Moreover, we have
\begin{equation}\label{measure zero}
\cH^n\left(\{x\in\Omega:\lim_{i\rightarrow\infty} \bigl(H^{\eps_i}(x)-H_{u^{\eps_i}(x)}^{\eps_i}(x)\bigr)\neq0\}\right)\leq \cH^n(S)=0.
\end{equation}
Therefore, the previous claim implies that the integrand on the right hand side in \eqref{eq:mean-curvature-co-area}
converges to zero almost everywhere in $\Omega$.

Next, since $\varphi\in C_c((0,T_{\mathrm{ext}}))$, there is a small $\tau>0$ such that $\varphi(s)=0$ for $s\leq\tau$.
We can further shrink $\tau$ so that the level set flow $\{M_t\}_{t\in[0,\tau]}$ with $M_0=\partial\Omega$ is a smooth mean curvature flow for $t\in[0,\tau]$.
Then $M_\tau$ is a mean convex hypersurface.
By Lemma \ref{pinching}, there exist large $C>0$ and $N\in\mathbb{N}$ such that for $t\geq\tau$ and $i\geq N$,
\begin{equation*}
    |A^{\eps_i}| \leq C H^{\eps_i} \quad \text{on} \quad N_t^{\eps_i}\cap(\Omega\times [0,\infty)), 
\end{equation*}

Given a regular point $x\in\Omega$ of $u^{\eps_i}$ with $i\geq N$ and $u^{\eps_i}(x)\geq\tau$, choose a local orthonormal frame
$e_1,\dots,e_n$ of $\Sigma_{u^{\eps_i}(x)}^{\eps_i}$ centered at $x$. These vectors are horizontal
tangent vectors on $N^{\eps_i}$, and hence at $x$ we have
\begin{align*}
|H^{\eps_i}_s||Du^{\eps_i}|&=\Bigr|\sum^n_{j=1}\frac{D^2u^{\eps_i}(e_j,e_j)}{|Du^{\eps_i}|}\Bigr||Du^{\eps_i}|\\
&=\Bigr|\sum^n_{j=1}\frac{D^2u^{\eps_i}(e_j,e_j)}{\sqrt{\eps_i^2+|Du^{\eps_i}|^2}}\Bigr|\sqrt{\eps^2+|Du^{\eps_i}|^2}\\
&\leq n|A^{\eps_i}|\sqrt{\eps_i^2+|Du^{\eps_i}|^2}\\
&\leq nCH^{\eps_i}\sqrt{\eps_i^2+|Du^{\eps_i}|^2}=nC.
\end{align*}
In the last equality, we used \eqref{eq:Hepsilon}.

For points $x\in\Omega$ with $u^{\eps_i}(x)<\tau$, we have $\varphi(u^{\eps_i}(x))=0$.
Thus, by Sard's theorem, the integrand of right hand side in \eqref{eq:mean-curvature-co-area} is bounded almost everywhere.
By the dominated convergence theorem, we conclude that
\[
\lim_{i\rightarrow\infty}\int \varphi(s)\int_{\Sigma_s^{\eps_i}}
\bigl(H^{\eps_i}-|H_s^{\eps_i}|\bigr)\,d\mathcal H^n\,ds=0.
\]
\end{proof}

\begin{proposition}[Minkowski inequality for almost every time slice]\label{thm:ae-minkowski}
Let $\Omega\subset \mathbb R^{n+1}$ be a bounded, open $\eta$--outward--minimizing set with smooth, strictly mean convex boundary.
Let $\{K_s\}_{0\le s<T_{\rm ext}}$ be the mean convex level set flow
with $K_0=\Omega$, and set
\[
    \Sigma_s:=\partial^*K_s,\qquad
    \mu_s:=\mathcal H^n\llcorner \Sigma_s .
\]
Then, for a.e. $s\in (0,T_{\rm ext})$,
\[
    \int_{\Sigma_s} H\,d\mu_s
    \ge
    n|{\mathbb{S}^n}|^{1/n}\left(|\Sigma_s|-\eta\right)^{\frac{n-1}{n}}_+ .
\]
\end{proposition}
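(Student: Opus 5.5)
The plan is to integrate the relaxed inequality of Lemma \ref{lem:relaxed-minkowski} against a test function in time, pass to the limit along a sequence $\eps_i\to0$, and then localize in time. Fix $\varphi\in C_c((0,T_{\rm ext}))$ with $\varphi\ge0$, and choose $\eps_i\downarrow0$ as in Proposition \ref{cor:area-convergence}, so that $|\Sigma_s^{\eps_i}|\to|\{u=s\}|=|\Sigma_s|$ for a.e. $s$; the last equality is Remark \ref{rmk:boundary-equivalence}. Lemma \ref{lem:relaxed-minkowski} gives, for a.e. $s$,
\[
\int_{\Sigma_s^{\eps_i}}|H_s^{\eps_i}|\ge n|\mathbb S^n|^{1/n}\bigl(|\Sigma_s^{\eps_i}|-\delta_s^{\eps_i}-\eta\bigr)_+^{\frac{n-1}{n}}.
\]
We multiply by $\varphi(s)$ and integrate in $s$.

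For the left side, we write $|H_s^{\eps_i}|=H^{\eps_i}-(H^{\eps_i}-|H^{\eps_i}_s|)$. By Lemma \ref{strong slice conv} the error term integrates to something that tends to $0$. By Proposition \ref{lem:mean-curv-convergence} the main term converges to $\int\varphi(s)\int_{\Sigma_s}H\,d\mu_s\,ds$. Hence
\[
\lim_{i\to\infty}\int\varphi(s)\int_{\Sigma_s^{\eps_i}}|H_s^{\eps_i}|\,ds=\int\varphi(s)\int_{\Sigma_s}H\,d\mu_s\,ds.
\]

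For the right side, \eqref{eq:delta-bound} gives $\int\delta_s^{\eps_i}ds\le\eps_i|\Omega|\to0$, so $\delta^{\eps_i}_s\to0$ in $L^1$. Passing to a further subsequence, $\delta_s^{\eps_i}\to0$ for a.e. $s$. Then $(|\Sigma_s^{\eps_i}|-\delta_s^{\eps_i}-\eta)_+^{(n-1)/n}\to(|\Sigma_s|-\eta)_+^{(n-1)/n}$ for a.e. $s$, since $x\mapsto x_+^{(n-1)/n}$ is continuous. Fatou's lemma (the integrand is nonnegative) then yields
\[
\int\varphi(s)\int_{\Sigma_s}H\,d\mu_s\,ds\ \ge\ n|\mathbb S^n|^{1/n}\int\varphi(s)\bigl(|\Sigma_s|-\eta\bigr)_+^{\frac{n-1}{n}}ds .
\]

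Since this holds for every nonnegative $\varphi\in C_c((0,T_{\rm ext}))$, both sides are $L^1_{\rm loc}$ functions of $s$. The left integrand is integrable by \cite[Lemma 4.4]{MS08}, which identifies $\int\psi(s)\int_{\Sigma_s}H\,ds$ with $\int_\Omega\psi(u)$; the right integrand is bounded. So the pointwise a.e. inequality follows from the Lebesgue differentiation theorem, i.e. by letting $\varphi$ approximate indicators of small intervals.

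The main obstacle is the left-hand limit. The relaxed inequality involves the slice curvature $|H_s^{\eps}|$, whereas weak convergence is only available for the translator curvature $H^\eps$. This is exactly what Lemma \ref{strong slice conv} resolves, using the pinching of Lemma \ref{pinching} for domination away from $s=0$, so the support of $\varphi$ must avoid $0$. A minor point is the case $n=1$, where the exponent $(n-1)/n=0$; there the inequality reads $\int H\ge|\mathbb S^1|$ on slices with $|\Sigma_s|>\eta$. The argument still goes through provided we interpret $x_+^0$ as the indicator of $\{x>0\}$, which is lower semicontinuous, so Fatou's lemma still applies in the correct direction.
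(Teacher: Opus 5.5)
Your proposal is correct and follows the paper's proof essentially step for step: you localize in time with $\varphi$, use Lemma~\ref{strong slice conv} together with Proposition~\ref{lem:mean-curv-convergence} to identify the limit of the left side, and use Lemma~\ref{lem:relaxed-minkowski}, Proposition~\ref{cor:area-convergence} and Fatou's lemma for the right side. The differences are minor: you remove $\delta_s^{\eps_i}$ by extracting an a.e.-convergent subsequence rather than by the paper's bound $(a-b)_+^{\frac{n-1}{n}}\ge a_+^{\frac{n-1}{n}}-b^{\frac{n-1}{n}}$ followed by H\"older, and you fold $n=1$ into the general argument with $x_+^0=\chi_{\{x>0\}}$ (enough for how the proposition is applied later) instead of using smooth curve shortening flow and the turning-angle bound $\int H\ge 2\pi$ on every slice, as the paper does.
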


\begin{proof}
When $n=1$, we by the strictly mean convex boundary condition on $\partial \Omega$, the level set flow agrees with the smooth planar curve shortening flow, which remains smooth until it extincts, so $\partial K_t=\partial^* K_t=\Sigma_s$. Moreover based on the number of convex boundary components of $\Sigma_s$, by turning-angle theorem, we always have
\begin{equation}
    \int_{\partial \Omega} H d\mu_s= \int_{\partial \Omega} \kappa d\mu_s\geq 2\pi=|\mathbb{S}^1|,
\end{equation}
which verifies \eqref{thm:ae-minkowski} when $n=1$. Hence, we can assume $n\geq 2$ below.

Let $u^\eps$ be the elliptic regularization of the arrival time,
and write
\[
    \Sigma_s^\eps:=\{u^\eps=s\},\qquad
    d\mu_s^\eps:=d\mathcal H^n\llcorner \Sigma_s^\eps .
\]

Fix a nonnegative function
\[
    \varphi\in C_c^\infty((0,T_{\rm ext})).
\]
By Proposition \ref{cor:area-convergence}, there is a subsequence $\eps_i\searrow 0$ such that the slice
area convergence
\begin{equation}\label{eq:area-convergence}
    |\Sigma_s^{\eps_i}|\rightarrow |\Sigma_s|
\end{equation}
holds for a.e. $s\in (0,T_{\rm ext})$. By Proposition \ref{lem:mean-curv-convergence},
\[
    \int \varphi(s)
        \left(\int_{\Sigma_s} H\,d\mu_s\right)ds
    =
    \lim_{i\to\infty}
    \int \varphi(s)
        \left(\int_{\Sigma_s^{\eps_i}}
            H^{\eps_i}\,d\mu_s^{\eps_i}
        \right)ds .
\]
Moreover, by Lemma \ref{strong slice conv},
\[
    \lim_{i\rightarrow\infty}\int \varphi(s)
    \left[
        \int_{\Sigma_s^{\eps_i}}
        \bigl(|H_s^{\eps_i}|-H^{\eps_i}\bigr)
        \,d\mu_s^{\eps_i}
    \right]ds = 0 .
\]
Therefore,
\[
    \int \varphi(s)
        \left(\int_{\Sigma_s} H\,d\mu_s\right)ds
    =
    \lim_{i\to\infty}
    \int \varphi(s)
        \left(\int_{\Sigma_s^{\eps_i}}
            |H_s^{\eps_i}|\,d\mu_s^{\eps_i}
        \right)ds .
\]

For a.e. $s$, Lemma \ref{lem:relaxed-minkowski}, the relaxed Minkowski inequality, applied to
$\Sigma_s^{\eps_i}$ gives
\[\int_{\Sigma_s^{\eps_i}}
        |H_s^{\eps_i}|\,d\mu_s^{\eps_i}
    \ge
    n|{\mathbb{S}^n}|^{1/n}
    \left(
        |\Sigma_s^{\eps_i}|-\delta_s^{\eps_i}-\eta
    \right)^{\frac{n-1}{n}}_+
    \]
    where $\delta_s^{\eps_i}\ge 0 \;\text{and}\;\|\delta_s^{\eps_i}\|_{L^1(\mathbb R)}\leq
    \eps_i|\Omega|.$

Then, Jensen's inequality gives
\[
\begin{aligned}
    \int \varphi(s)
        \left(\int_{\Sigma_s} H\,d\mu_s\right)ds
    &\ge
    \liminf_{i\to\infty}
    n|{\mathbb{S}^n}|^{1/n}
    \int \varphi(s)
    \left[
        \left(|\Sigma_s^{\eps_i}|-\eta\right)_+^{\frac{n-1}{n}}
        -
        (\delta_s^{\eps_i})^{\frac{n-1}{n}}
    \right]ds .
\end{aligned}
\]

We now remove the error term in the limit.
By Hölder's inequality,
\begin{align*}
    \int
        \varphi(s)(\delta_s^{\eps_i})^{\frac{n-1}{n}}\,ds
    &\le
    \|\varphi\|_{L^\infty}
        \int_{0}^{T_{\rm ext}} (\delta_s^{\eps_i})^{\frac{n-1}{n}}\,ds\\
    &\le
    \|\varphi\|_{L^\infty}
    (T_{\rm ext})^{1-\frac{n-1}{n}}
    \left(
        \int_{0}^{T_{\rm ext}} \delta_s^{\eps_i}\,ds
    \right)^{\frac{n-1}{n}}\\
    &\le
    T_{\rm ext}\|\varphi\|_{L^\infty}
    (\eps_i)^{\frac{n-1}{n}}
    \rightarrow 0
\end{align*}
as $i\to\infty$.
   
Hence,
\[
    \int \varphi(s)
        \left(\int_{\Sigma_s} H\,d\mu_s\right)ds
    \ge
    n|{\mathbb{S}^n}|^{1/n}
    \liminf_{i\to\infty}
    \int \varphi(s)
        \left(|\Sigma_s^{\eps_i}|-\eta\right)^{\frac{n-1}{n}}_+\,ds .
\]
Finally, by \eqref{eq:area-convergence} and Fatou's lemma, we get
\[
    \liminf_{i\to\infty}
    \int \varphi(s)
        \left(|\Sigma_s^{\eps_i}|-\eta\right)^{\frac{n-1}{n}}_+\,ds
    \ge
    \int \varphi(s)\left(|\Sigma_s|-\eta\right)^{\frac{n-1}{n}}_+\,ds .
\]
Therefore,
\[
    \int \varphi(s)
        \left(\int_{\Sigma_s} H\,d\mu_s\right)ds
    \ge
    \int \varphi(s)
        n|{\mathbb{S}^n}|^{1/n}
        \left(|\Sigma_s|-\eta\right)^{\frac{n-1}{n}}_+\,ds .
\]
Since 
$\varphi\in C_c^\infty((0,T_{\rm ext}), \mathbb{R}_+)$ is arbitrary, it follows that
\[
    \int_{\Sigma_s} H\,d\mu_s
    \ge
    n|{\mathbb{S}^n}|^{1/n}
    \left(|\Sigma_s|-\eta\right)^{\frac{n-1}{n}}_+
\]
for a.e. $s\in (0,T_{\rm ext})$.
\end{proof}

Above theorem becomes ineffective for time slices with small area, namely when \(|\Sigma_s|\leq \eta\).
For such slices, we need the following  estimate.
\begin{lemma}\label{lem:MS-inequality} Let $\Omega\subset \mathbb R^{n+1}$ be a domain with smooth, strictly mean convex boundary. Let $\{K_s\}_{0\le s<T_{\rm ext}}$ be the mean convex level set flow
with $K_0=\Omega$.
Then, for a.e. $s\in (0,T_{\rm ext})$,
\[
    \int_{\Sigma_s} H\,d\mu_s
    \ge
    c_{\mathrm{MS}}|\Sigma_s|^{\frac{n-1}{n}},
\]
where $c_{\mathrm{MS}}(n)=n|{\mathbb{B}^n}|^{\frac{1}{n}}$
\end{lemma}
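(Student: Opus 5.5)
We follow the scheme of Proposition \ref{thm:ae-minkowski}, replacing the relaxed Minkowski inequality of Lemma \ref{lem:relaxed-minkowski} by the Michael--Simon Sobolev inequality. The constant $c_{\mathrm{MS}}=n|\mathbb{B}^n|^{1/n}$ is exactly the one obtained from the Michael--Simon/Allard isoperimetric inequality with test function $\equiv 1$ on a closed hypersurface. In its sharp form (Brendle) this reads
\[
\int_\Sigma |H|\,d\mathcal H^n\ge n|\mathbb{B}^n|^{1/n}|\Sigma|^{\frac{n-1}{n}}
\]
for closed hypersurfaces in $\R^{n+1}$. It needs no outward-minimizing hypothesis, which is why the lemma drops $\eta$. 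For $n=1$ the statement is immediate: as in the proof of Proposition \ref{thm:ae-minkowski}, the flow is a smooth curve shortening flow of convex curves, and $\int\kappa\ge 2\pi\ge 2=c_{\mathrm{MS}}(1)$.

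For $n\ge2$ we work with the elliptic regularization. Fix a nonnegative $\varphi\in C_c^\infty((0,T_{\rm ext}))$ and take the subsequence $\eps_i$ given by Proposition \ref{cor:area-convergence}. By Sard's theorem, for a.e. $s$ the slice $\Sigma_s^{\eps_i}=\{u^{\eps_i}=s\}$ is a smooth closed hypersurface. Applying Brendle's inequality to it gives
\[
\int_{\Sigma_s^{\eps_i}}|H_s^{\eps_i}|\,d\mu_s^{\eps_i}\ge c_{\mathrm{MS}}|\Sigma_s^{\eps_i}|^{\frac{n-1}{n}}.
\]
If one prefers to avoid Brendle's sharp constant, any Michael--Simon type inequality for closed hypersurfaces works with the appropriate constant; the statement only cites $c_{\mathrm{MS}}$.

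Next we integrate against $\varphi$ and pass to the limit. Proposition \ref{lem:mean-curv-convergence} and Lemma \ref{strong slice conv} together give
\[
\int\varphi\int_{\Sigma_s}H\,d\mu_s\,ds=\lim_{i\to\infty}\int\varphi\int_{\Sigma_s^{\eps_i}}|H_s^{\eps_i}|\,d\mu_s^{\eps_i}\,ds.
\]
Combining the pointwise area convergence of Proposition \ref{cor:area-convergence} with Fatou's lemma, we obtain
\[
\int\varphi(s)\int_{\Sigma_s}H\,d\mu_s\,ds\ge c_{\mathrm{MS}}\int\varphi(s)|\Sigma_s|^{\frac{n-1}{n}}\,ds.
\]
Since $\varphi\ge0$ is arbitrary, the inequality holds for a.e. $s$. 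Here $|\Sigma_s|=\mathcal H^n(\partial^*K_s)$, which agrees with $\mathcal H^n(\{u=s\})$ for a.e. $s$ by Remark \ref{rmk:boundary-equivalence}.

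The main point to check is the applicability of Lemma \ref{strong slice conv} and of the smooth inequality on the regularized slices. Both are already available: the level sets are smooth closed hypersurfaces inside the bounded set $\Omega$ for a.e. $s$, and Lemma \ref{strong slice conv} handles the comparison between $H^{\eps_i}$ and the slice curvature $H_s^{\eps_i}$. No error term $\delta_s^\eps$ arises, because we do not pass through the minimizing hull. The result is therefore a more direct version of the proof of Proposition \ref{thm:ae-minkowski}.
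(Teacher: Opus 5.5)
Your proposal is correct and follows the paper's proof essentially verbatim. The paper also reruns the elliptic-regularization argument of Proposition \ref{thm:ae-minkowski}, replacing the relaxed Minkowski inequality by the (Brendle-sharp) Michael--Simon inequality $\int_{\Sigma_s^{\eps}}|H|\ge c_{\mathrm{MS}}|\Sigma_s^{\eps}|^{\frac{n-1}{n}}$ on the smooth regularized level sets, and then passes to the limit via Proposition \ref{lem:mean-curv-convergence}, Lemma \ref{strong slice conv}, the slice-area convergence and Fatou's lemma; your separate treatment of $n=1$ and your observation that no $\delta_s^\eps$ error term appears are simply details the paper leaves implicit.
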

\begin{proof}
    Let $u^\eps$ be the elliptic regularization of the arrival time,
and write
\[
    \Sigma_s^\eps:=\{u^\eps=s\},\qquad
    d\mu_s^\eps:=d\mathcal H^n\llcorner \Sigma_s^\eps .
\]
Then we can apply the approximation argument in the proof of Proposition \ref{thm:ae-minkowski}, replace relaxed Minkowski inequality by the Michael--Simon Sobolev inequality \cite{MS73, brendle2021isoperimetric} to smooth hypersurface ${\Sigma_s^{\varepsilon}}$
\begin{equation}\label{eq:MS-inequality}
\int_{\Sigma_s^{\varepsilon}}
|H_{\Sigma_s^{\varepsilon}}|\,d\mathcal H^n\geq c_{\mathrm{MS}}(n)|{\Sigma_s^{\varepsilon}}|^{\frac{n-1}{n}}
\end{equation}
and use Lemma \ref{strong slice conv} to conclude the proof of Lemma \ref{lem:MS-inequality}.
\end{proof}

\section{Mean convex approximation}\label{mean convex approximation}

To prove the optimal extinction time estimates for weak mean curvature flow, we will take the outward minimizing hull $\Omega^*$ of smooth bounded domain $\Omega$. $\partial \Omega^*$  has at most  $C^{1, 1}$ regularity in the contact part and has codimension $7$ singularities in the free minimal part. Hence, we need to first construct a smooth, mean convex envelope of an arbitrary domain $\Omega$ with smooth boundary.
We will show that the level set flow starting from these envelopes will have extinction time
whose limit supremum is less than or equal to the right hand side of \eqref{eq:general-extinction}.
Then, maximum principle gives the desired upper bound
for the extinction time of the level set flow starting from $\Omega$.

The construction involves three steps.
First, $\Omega^*$ admits a smooth but not necessarily mean convex approximation $U_i$,
by Schmidt's interior approximation theorem \cite{Schmidt2015}.
Then, we consider a penalized area functional for finite perimeter sets containing $U_i$,
whose minimizer satisfies a prescribed mean curvature equation.
Consequently, the minimizer turns out to be $C^\infty$--quasi--regular
having strictly positive mean curvature on its regular part.
This allows us to apply Bi--Zhu's mean--convex approximation \cite{BZ26CurvatureFree},
which was used in their recent paper to prove the Riemannian Penrose inequality
for general dimensions \cite{BZ26Penrose}. The precise statement of the approximation lemma is as follows:

\begin{lemma}[Smooth strictly mean convex approximation of an outward minimizing hull]\label{lem:hull-approximation}
Let $\Omega \Subset \mathbb{R}^{n+1}$ be a bounded open set with smooth
boundary, and let $\Omega^*$
be its strictly outward-minimizing hull. Then there exists a large ball $B$ and
a sequence of bounded smooth open sets $\{\Omega_i\}_{i=1}^\infty$ such that
\[
\Omega^* \Subset \Omega_i \subset B,
\qquad
H_{\partial\Omega_i}>0,
\]
where the mean curvature is computed with respect to the outward unit
normal, and
\begin{equation}\label{eq:area-approximation} 
P(\Omega_i)\leq P(\Omega^*)+i^{-1}.
\end{equation}
Consequently, for every finite-perimeter set $F\supset \Omega_i$,
\begin{equation}\label{eq:initial-almost-outward-minimizing}
P(\Omega_i)\leq P(F)+i^{-1}.
\end{equation}
\end{lemma}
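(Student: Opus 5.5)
The plan follows the three-step outline announced before the statement: approximate $\Omega^*$ by smooth sets, upgrade them to sets with positive mean curvature by a penalized minimization, then smooth by Bi--Zhu's approximation.

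\emph{Step 1: smooth approximation of the hull.} Apply Schmidt's interior approximation theorem \cite{Schmidt2015} to the finite perimeter set $\Omega^*$. Since $\partial\Omega$ is smooth, $\partial\Omega^*$ is $C^{1,1}$ near the contact set and $\cH^n(\partial\Omega^*)=P(\Omega^*)$, which is the hypothesis Schmidt needs. Applied to a slight outward dilate if necessary, this yields bounded smooth open sets $U_i$ with
\[
\Omega^*\Subset U_i\subset B, \qquad P(U_i)\le P(\Omega^*)+(3i)^{-1},
\]
for some large ball $B$. The sets $U_i$ need not be mean convex.

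\emph{Step 2: penalized minimization.} Fix $\lambda_i>0$ and minimize
\[
\mathcal F_i(F)=P(F)+\lambda_i|F\setminus U_i|
\]
over finite perimeter sets $U_i\subset F\subset B$. Existence follows by BV compactness. Let $E_i$ be a minimizer and compare $\mathcal F_i(E_i)\le\mathcal F_i(U_i)$ to get $P(E_i)\le P(U_i)$. The free part of $\partial E_i$ satisfies $H=\lambda_i>0$; it is smooth off a closed singular set of codimension at least $7$, and it is $C^{1,1}$ near the obstacle $\partial U_i$. To guarantee $H\ge0$ on the contact part, I would take the obstacle-side variational inequality, which yields $H\ge0$ in the weak sense there. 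Because $B$ is convex, the truncation $F\mapsto F\cap B$ decreases perimeter, so a minimizer can be chosen staying strictly inside $B$.

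\emph{Step 3: strictly mean convex smoothing.} Feed $E_i$ into Bi--Zhu's mean-convex approximation lemma \cite{BZ26CurvatureFree}. This produces smooth $\Omega_i\Supset E_i\supset U_i\Supset\Omega^*$ with $H_{\partial\Omega_i}>0$ and $P(\Omega_i)\le P(E_i)+(3i)^{-1}$. Combining the three estimates gives \eqref{eq:area-approximation}.

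\emph{Consequence \eqref{eq:initial-almost-outward-minimizing}.} Let $F\supset\Omega_i$ have finite perimeter. Then $F\supset\Omega$, so the minimizing property of the hull gives $P(F)\ge P(\Omega^*)$, which is at least $P(\Omega_i)-i^{-1}$ by \eqref{eq:area-approximation}. If $F$ is unbounded but has finite perimeter, it must have finite volume; it can then be replaced by its truncation in a large ball with small perimeter error, or one can appeal to the definition of the hull directly.

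\emph{Main obstacle.} I expect the hardest step to be Step 2 together with the hypotheses of Bi--Zhu's lemma. One must check that the contact region with the nonsmooth-in-general obstacle $\partial U_i$ still carries nonnegative (indeed positive) mean curvature. One must also check that the singular set is small enough for the smoothing to work. If this fails, I would first try a positive-mean-curvature obstacle, for instance by replacing $U_i$ with a slightly inflated set, or by adding a small forcing term so that strict positivity propagates.
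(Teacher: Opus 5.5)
Step 2 has a genuine gap, and this is the step that carries the lemma. Take a variation field $X$ supported away from $\overline{U_i}$, with flow $\Phi_t$. Then
\[
\frac{d}{dt}\Big|_{t=0}\Bigl(P(\Phi_t(E_i))+\lambda_i|\Phi_t(E_i)\setminus U_i|\Bigr)=\int_{\partial E_i}(H+\lambda_i)\,X\cdot\nu\,d\mathcal H^n .
\]
So the free part of your minimizer satisfies $H=-\lambda_i<0$ with respect to the outward normal, not $H=\lambda_i$. Penalizing enclosed volume makes the free boundary mean concave; picture the neck bridging two nearby balls. On the contact set, the one-sided variational inequality only gives $H\ge-\lambda_i$, not $H\ge 0$. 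So $E_i$ is not a valid input for Bi--Zhu's mean convex smoothing.

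To get positive curvature you must reward volume. The paper does this with $\mathcal J_i(F)=P(F)-\int_{F\setminus U_i}q_i\,dx$, $q_i>0$, which gives $H=q_i$ on the regular free part. This changes the bookkeeping in two ways:
\begin{itemize}
\item comparison with $U_i$ now only gives $P(F_i)\le P(U_i)+\int q_i$, so you need $\int q_i<(4i)^{-1}$;
\item confinement to a ball no longer follows from convexity of $B$ alone; the paper imposes $q_i<n/(2|x|)$ for $|x|\ge R$ and calibrates with $x/|x|$.
\end{itemize}

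Even with the correct sign, the contact set is the real obstacle, and your fallback of ``a small forcing term'' does not identify the mechanism. In the paper, the smoothing is applied to an almost-minimizing, quasi-regular boundary that lies entirely in the open region $\mathbb R^{n+1}\setminus\overline{U_i}$. Its regular part is smooth with $H\ge\gamma_i>0$. The smoothing approximates $\partial F_i$ from the interior side, so you cannot expect $\Omega_i\Supset E_i$. Containment of $U_i$ has to come from a positive gap between $\partial F_i$ and $\overline{U_i}$. Where $\partial E_i$ touches $\partial U_i$, the boundary is only a $C^{1,1}$ one-sided minimizer, and these hypotheses fail.

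The missing idea is to choose the forcing so that $q_i>H_{\partial U_i}$ on $\partial U_i$. Concentrate it in a thin collar so its integral stays small, and add a tiny positive tail so $q_i>0$ everywhere. Calibrating with $X_i=\nabla d_i$ in the collar then gives $\mathcal J_i(F_i\cup U_i^s)-\mathcal J_i(F_i)\le -c_i|U_i^s\setminus F_i|$. Hence $|U_i^s\setminus F_i|=0$, and the minimizer detaches from the obstacle.

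Only after detachment is $\partial F_i$ almost-minimizing at every point. Then Almgren--Bombieri--Tamanini regularity and elliptic bootstrapping give:
\begin{itemize}
\item a smooth regular part with $H=q_i\ge\gamma_i:=\min_{\partial F_i}q_i>0$;
\item a singular set of dimension at most $n-7$;
\item quasi-regularity.
\end{itemize}
The interior smoothings are Hausdorff-close to $\partial F_i$, so they still enclose $U_i$.

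Your Step 1 and your derivation of the final almost-outward-minimizing inequality are fine.
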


\begin{proof}
Without loss of generality, suppose $\Omega$ contains the origin.
By \cite[Lemma~5.5]{AFM22} (see also \cite{Schmidt2015}), there are bounded smooth
open sets $U_i$ such that
\begin{equation}\label{eq:Ui-perimeter}
\Omega^* \subset U_i,
\quad
P(U_i)\leq P(\Omega^*)+(4i)^{-1},
\quad \text{and } \dist_{\cH}(\partial U_i,\partial\Omega^*)\leq (4i)^{-1},
\end{equation}
where $\dist_\cH$ denotes the Hausdorff distance.
Let $B_R$ be the ball of radius $R$ centered at the origin,
such that $U_i\Subset B_{R/2}$ for all $i$.
Consider the problem of minimizing the penalized area functional
\[
\mathcal J_i(F)
:=
P(F)-\int_{F\setminus U_i}q_i\,dx
\]
on the class of finite perimeter sets $F\supset U_i$ of finite measure, where
\[
q_i\in C^\infty(\mathbb{R}^{n+1}\setminus U_i)
\]
satisfies
\begin{align}
q_i&>0
    &&\text{on }\mathbb{R}^{n+1}\setminus U_i,
    \label{eq:qi-positive}\\
q_i&>H_{\partial U_i}
    &&\text{on }\partial U_i,
    \label{eq:qi-barrier}\\
\int_{\mathbb{R}^{n+1}\setminus U_i}q_i\,dx
    &<(4i)^{-1}.
    \label{eq:qi-integral}
\end{align}
Moreover, we require $q_i$ to decay sufficiently rapidly that,
\begin{equation}\label{eq:qi-radial}
q_i(x)<\frac{n}{2|x|}
\quad\text{for }|x|\geq R.
\end{equation}
Such a function can be constructed as follows. Choose
$
a_i>\sup H_{\partial U_i}
$
and a nonnegative smooth cutoff $\psi_i$ which equals one on
$\partial U_i$ and is supported in a sufficiently thin tubular
neighborhood of $\partial U_i$. By making the tubular neighborhood thin,
we have
\[
a_i\int_{\mathbb{R}^{n+1}\setminus U_i}\psi_i\,dx<(8i)^{-1}.
\]
We force $\supp \psi_i \Subset B_R$ and choose $\varepsilon_i>0$ sufficiently small. Setting
\[
q_i(x):=a_i\psi_i(x)+\varepsilon_i e^{-\eps_i^{-1}|x|^2},
\]
\eqref{eq:qi-positive}--\eqref{eq:qi-radial} follow.

We first show that the minimization problem for $\cJ_i$ admits a minimizer $F_i$ that satisfy $U_i \subset F_i \subset B_{R}$. For a calibration-type argument, we consider the unit radial vector field
\[Y_i(x)=\frac{x}{|x|},\]
which is a smooth vector field on $\R^{n+1}\setminus \{0\}$ satisfying
\[
|Y_i|=1, \quad Y_i=\nu_{B_r}\quad\text{for }r>0,
\quad
\operatorname{div}Y_i=\frac{n}{|x|}.
\]
For a finite perimeter set $F$ containing $U_i$, a standard inequality for perimeter gives
\[
P(F\cap B_{R})+P(F\cup B_{R})
\leq P(F)+P(B_{R}).
\]
Applying the Gauss-Green formula, we obtain
\[
P(B_{R})-P(B_{R/2})=\int_{B_{R}\setminus B_{R/2}}\operatorname{div}Y_i\,dx
\]
and
\[
P(F\cup B_{R})-P(B_{R/2})
\geq
\int_{(F\cup B_{R})\setminus B_{R/2}}\operatorname{div}Y_i\,dx.
\]
Consequently,
\begin{align*}
P(F\cap B_{R})-P(F)
&\leq P(B_{R})-P(F\cup B_{R})\\
&\leq
-\int_{F\setminus B_{R}}\operatorname{div}Y_i\,dx\\
&=
-\int_{F\setminus B_{R}}\frac{n}{|x|}\,dx.
\end{align*}
Since $U_i\subset B_{R}$, it follows from \eqref{eq:qi-radial} that
\begin{align}
\mathcal J_i(F\cap B_{R})-\mathcal J_i(F)
&\leq
-\int_{F\setminus B_{R}}
\left(\frac{n}{|x|}-q_i(x)\right)\,dx
\leq 0.
\label{eq:radial-truncation}
\end{align}
Therefore, a minimizing sequence of $\cJ_i$ may be chosen inside $B_{R}$. Since $B_{R}$ is bounded, this yields a minimizer $F_i$ of the original admissible class, which satisfies $U_i\subset F_i \subset B_{R}$. Here, without changing the value of $\cJ_i(F_i)$, we can replace $F_i$ with its measure-theoretic interior
\[F_i^{(1)}=\left\{x\in\R^{n+1}: \lim_{r\rightarrow 0+}\frac{|B_r(x)\cap F_i|}{|B_r(x)|}=1\right\}.\]
This is because $F_i^{(1)}$ and $F_i$ agree up to $\mathcal{L}^{n+1}$-null set by Lebesgue differentiation theorem. A convenient property of $F_i^{(1)}$ is that
\[\partial F_i^{(1)}=\overline{\partial^*F_i}.\]
That is, the topological boundary of $F_i^{(1)}$ agrees with the closure of the reduced boundary of $F_i$. Moreover, $\partial F_i^{(1)}$ agrees with
\[\operatorname{spt}|D\chi_{F_i}|=\{x\in\R^{n+1}: |B_r(x)\cap F_i|, |B_r(x)\cap F_i^c|>0 \; \text{for} \; r>0\}.\]
For a proof, see \cite[Theorem 10]{Caraballo2011} and \cite[Proposition 12.19]{Mag12}.
From now on, we will work with this representative and write $F_i$ for $F_i^{(1)}$. A comparison with $U_i$ gives
\[
P(F_i)-\int_{F_i\setminus U_i}q_i\,dx
\leq P(U_i),
\]
and hence, by \eqref{eq:qi-integral},
\begin{equation}\label{eq:Fi-perimeter}
P(F_i)\leq P(U_i)+(4i)^{-1}.
\end{equation}

We next show that the minimizer separates from the  inner obstacle $U_i$.
Let $d_i$ be the signed distance from $\partial U_i$, chosen positive on
$\mathbb{R}^{n+1}\setminus\overline{U_i}$. Since $\partial U_i$ is smooth,
$d_i$ is smooth in a tubular neighborhood of $\partial U_i$, and
\[
\nabla d_i=\nu_{U_i},
\qquad
\operatorname{div}(\nabla d_i)=H_{\partial U_i}
\quad\text{on }\partial U_i.
\]
By \eqref{eq:qi-barrier} and continuity, after shrinking the tubular neighborhood, there
exist $\rho_i>0$ and $c_i>0$ such that $d_i$ is smooth in the collar \[C_i:=\{-2\rho_i<d_i<2\rho_i\},\]
\[|\nabla d_i|>0 \quad \text{on } C_i \]
so that each sublevel set $\{d_i<s\}$ for $-2\rho_i<s<2\rho_i$ has smooth boundary, and
\begin{equation}\label{eq:strict-collar}
q_i-\operatorname{div}(\nabla d_i)\geq c_i
\quad\text{on }\{0\leq d_i< 2\rho_i\}.
\end{equation}

Writing $X_i=\nabla d_i$, this is a smooth vector field on $C_i$ satisfying $|X_i|\leq 1$. For $0<s\leq\rho_i$, set
\[
U_i^s:=\{d_i<s\}.
\]
Again, we have
\[
P(F_i\cup U_i^s)-P(F_i)
\leq
P(U_i^s)-P(F_i\cap U_i^s).
\]
Since $X_i=\nu_{U_i^s}$ on $\partial U_i^s$, the Gauss--Green formula
and $|X_i|\leq1$ give
\[
P(U_i^s)-P(U_i)
=
\int_{U_i^s\setminus U_i}\operatorname{div}X_i\,dx
\]
and
\[
P(F_i\cap U_i^s)-P(U_i)
\geq
\int_{(F_i\cap U_i^s)\setminus U_i}\operatorname{div}X_i\,dx.
\]
Therefore
\begin{equation}\label{eq:collar-perimeter}
P(F_i\cup U_i^s)-P(F_i)
\leq
\int_{U_i^s\setminus F_i}\operatorname{div}X_i\,dx.
\end{equation}
Using \eqref{eq:strict-collar} and the minimality of
$F_i$, we find
\begin{align*}
0
&\leq
\mathcal J_i(F_i\cup U_i^s)-\mathcal J_i(F_i)\\
&\leq
\int_{U_i^s\setminus F_i}
\bigl(\operatorname{div}X_i-q_i\bigr)\,dx\\
&\leq -c_i|U_i^s\setminus F_i|.
\end{align*}
It follows that
\[
|U_i^s\setminus F_i|=0
\qquad\text{for every }0<s\leq\rho_i,
\]
and in particular $\operatorname{dist}(\partial  F_i,\partial U_i)>0$.

We now prove that $\partial F_i$ is almost minimizing. Choose
\[
0<r_i^0<
\frac14\operatorname{dist}(\partial F_i,\partial U_i).
\]
Let $x\in\partial F_i$, let $r<r_i^0$, and suppose that $G$ is a
finite-perimeter set satisfying
\[
F_i\triangle G\Subset B_r(x).
\]
Since $B_r(x)\cap U_i=\varnothing$, the set $G$ still contains $U_i$ and is
therefore admissible. Minimality gives
\begin{align*}
P(F_i;B_r(x))
&\leq
P(G;B_r(x))
+\int_{F_i\setminus G}q_i\,dx
-\int_{G\setminus F_i}q_i\,dx\\
&\leq
P(G;B_r(x))
+\|q_i\|_{L^\infty(\mathbb{R}^{n+1})}
\,|F_i\triangle G|\\
&\leq
P(G;B_r(x))
+\Lambda_i r^{n+1},
\end{align*}
where
\[
\Lambda_i
:=
\omega_{n+1}\|q_i\|_{L^\infty(\mathbb{R}^{n+1})}.
\]
Thus a variation of $F_i$ within $B_r(x)$ only results in a perimeter gain of $O(r^{n+1})$, which decays faster than $r^n$ as $r\rightarrow 0$.
The regularity theorem of Almgren--Bombieri--Tamanini \cite{Alm75, Bom82, Tam84} for almost-minimizing boundaries gives a decomposition
\[
\partial F_i
=
\operatorname{Reg}(\partial F_i)
\sqcup
\operatorname{Sing}(\partial F_i),
\]
where $\operatorname{Reg}(\partial F_i)$ is relatively open and
$C^{1,\alpha_i}$ for some $\alpha_i\in(0,1)$, and
\[
\dim_{\mathcal H}\operatorname{Sing}(\partial F_i)
\leq (n+1)-8=n-7.
\]

Since $\partial F_i$ is separated from $U_i$, all compactly supported
variations near $\partial F_i$ are admissible in both directions. Let
$X\in C_c^\infty(\mathbb{R}^{n+1}\setminus\overline{U_i};\mathbb{R}^{n+1})$,
and let $\Phi_t$ be its flow. Computing the first variation of $\cJ_i$ for
$\Phi_t(F_i)$ at $t=0$ gives
\begin{equation}\label{eq:first-variation-Fi}
\int_{\partial F_i}
\left(
\operatorname{div}_{\partial F_i}X
-q_i\,X\cdot\nu_i
\right)
\,d\mathcal H^n
=0.
\end{equation}
Consequently, on the regular part,
\begin{equation}\label{eq:H-equals-q}
H_{\partial F_i}=q_i
\end{equation}
with respect to the outward unit normal.

Locally, $\operatorname{Reg}(\partial F_i)$ is a $C^{1,\alpha_i}$ graph,
and \eqref{eq:H-equals-q} is the prescribed-mean-curvature equation with
smooth right-hand side. Standard quasilinear elliptic regularity first gives
$C^{2,\alpha_i}$ regularity and then, by bootstrapping,
\[
\operatorname{Reg}(\partial F_i)
\quad\text{is smooth}.
\]

As noted in \cite[Remark 3.10]{BZ26Penrose}, together with the almost minimizing property, this gives quasi--regularity of $\operatorname{Int} F_i$ (For definitions, see \cite[Section 3]{BZ26CurvatureFree}).
Indeed, first note that almost-minimizing boundary admits a tangent minimal cone at each point of the boundary \cite{Tam84}.
If $p\in\partial F_i$ is a point at which some ball in $F_i$
can be inscribed at, the tangent cone at $p$ is contained in a closed half-space.
By the maximum principle, this forces the tangent cone at $p$ to be a plane \cite{Sim87}, so $p\in\text{Reg}(\partial F_i)$.
Conversely, if $p\in\text{Reg}(\partial F_i)$, by taking a smooth boundary chart we can find a ball in $F_i$ inscribed at $p$.

The set $\partial F_i$ is compact, since $F_i\subset B_{R_i}$, and it is
disjoint from $\overline{U_i}$. Since $q_i>0$ on
$\mathbb{R}^{n+1}\setminus U_i$, we have
\[
\gamma_i
:=
\min_{\partial F_i}q_i
>0.
\]
It follows from \eqref{eq:H-equals-q} that
\begin{equation}\label{eq:positive-H-Fi}
H_{\partial F_i}\geq\gamma_i>0
\quad\text{on }\operatorname{Reg}(\partial F_i).
\end{equation}

We may now apply the mean convex smoothing theorem \cite[Proposition~2.6]{BZ26CurvatureFree} to $\operatorname{Int}F_i$ in the smooth exterior region
$\mathbb{R}^{n+1}\setminus\overline{U_i}$. It gives smooth compact
hypersurfaces $\Gamma_{i,k}$ approximating $\partial F_i$ from the interior
side such that
\[
H_{\Gamma_{i,k}}\geq\gamma_i-k^{-1}
\]
with respect to the outward normal,
\begin{equation}\label{eq:hausdorff-conv}
\operatorname{dist}_\cH(\Gamma_{i,k},\partial F_i)<k^{-1},
\end{equation}
and
\[
\mathcal H^n(\Gamma_{i,k})
\rightarrow
\mathcal H^n(\partial F_i)
=
P(F_i).
\]
Let $\Omega_{i,k}$ be the bounded open set enclosed by $\Gamma_{i,k}$.
For all sufficiently large $k$,
\[
U_i\Subset\Omega_{i,k}.
\]
Choose $k=k(i)$ sufficiently large that this inclusion, mean convexity of $\partial\Omega_i$, and
\[
P(\Omega_{i,k(i)})
\leq P(F_i)+(4i)^{-1}
\]
holds.
Set
\[
\Omega_i:=\Omega_{i,k(i)},
\]
we claim that $\Omega_i$ are the desired approximations with smooth boundary. Indeed, we first have
\[
\Omega^*\subset U_i\Subset\Omega_i\subset B_{R+1},
\quad \text{and} \quad
H_{\partial\Omega_i}>0,
\]
where the last inclusion holds by \eqref{eq:hausdorff-conv}.
Combining \eqref{eq:Ui-perimeter}, \eqref{eq:Fi-perimeter}, and the choice
of $\Omega_i$, 
\[
P(\Omega_i)\le P(F_i)+\frac1{4i}
\le P(U_i)+\frac1{2i}
\le P(\Omega^*)+\frac{3}{4i}
<P(\Omega^*)+\frac1i.
\]

Finally, let $F$ be any finite-perimeter set satisfying $F\supset\Omega_i$.
Since $\Omega^*$ is outward minimizing and $
F\supset\Omega_i\supset \Omega^*,
$
\[
P(\Omega^*)\leq P(F).
\]
Therefore
\[
P(\Omega_i)
\leq P(\Omega^*)+i^{-1}
\leq P(F)+i^{-1}.
\]
This completes the proof.
\end{proof}

Given the approximations $\Omega_i$, let
\[
K_{i,s}:=\{u_i\geq s\},
\qquad
\Sigma_{i,s}:=\partial^{*}K_{i,s},
\qquad
\mu_{i,s}:=\mathcal H^n\llcorner\Sigma_{i,s},
\]
where $u_i:\Omega_i\rightarrow[0,T_i]$ is the arrival-time function for mean convex level-set flow,
and define the area function
\[
A_i(s):=\mu_{i,s}(\mathbb R^{n+1})=\mathcal{H}^n(\Sigma_{i,s}).
\]

From the first variation of perimeter
\begin{equation}
A_i'(s)
=
-\int_{\Sigma_{i,s}}H_i^2\,d\mu_{i,s},
\end{equation}
Proposition \ref{thm:ae-minkowski} and Lemma \ref{lem:MS-inequality} yield a differential inequality for $A_i(s)$.
Integrating the  differential inequality to the extinction time
\[T_i=\inf\{t\geq0: K_i(t)=\varnothing\}=\inf\{t\geq0: A_i(t)=0\},\]
we obtain the extinction time estimate for the approximating mean convex flows.

\begin{proposition}[Extinction time of the approximating flows]
\label{prop:approximate-extinction}
For all sufficiently large \(i\),
\begin{equation}\label{eq:Ti-estimate}
T_i
\leq
\mathcal{T}_i:=\frac{1}{2n}
\left(
\frac{A_i(0)}{|{\mathbb{S}^n}|}
\right)^{{\frac{2}{n}}}
\left(1-i^{-1/2}\right)^{-\frac{2(n-1)}{n}}
+
\frac{n}{2c_{\mathrm{MS}}(n)^2}i^{-1/n}.
\end{equation}
In particular, we have
\begin{equation}\label{eq:limsup-Ti}
\limsup_{i\to\infty}T_i
\leq
\frac{1}{2n}
\left(
\frac{P(\Omega^*)}{|{\mathbb{S}^n}|}
\right)^{{\frac{2}{n}}}.
\end{equation}
\end{proposition}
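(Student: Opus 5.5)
We integrate the differential inequality for $A_i$ obtained by combining the area dissipation formula \eqref{mean convex level set flow schulze} with Cauchy--Schwarz and the two lower bounds on $\int H$. Each $\Omega_i$ from Lemma \ref{lem:hull-approximation} is a bounded smooth strictly mean convex $\eta_i$--outward--minimizing set with $\eta_i=i^{-1}$. Since $\{K_{i,s}\}$ is mean convex, Lemma \ref{lem:no-mass-drop} makes $A_i$ absolutely continuous, nonincreasing, with $A_i'(s)=-\int_{\Sigma_{i,s}}H_i^2\,d\mu_{i,s}$ for a.e.\ $s$. Cauchy--Schwarz then gives $A_i'\le -A_i^{-1}\bigl(\int_{\Sigma_{i,s}}H_i\bigr)^2$ wherever $A_i>0$.

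We split the time interval at the threshold $A_i(s)=i^{-1/2}$, using that $A_i$ is monotone. Let $s_i$ be the first time with $A_i(s_i)\le i^{-1/2}$, so $A_i>i^{-1/2}$ on $[0,s_i)$.

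On $[0,s_i)$ we use Proposition \ref{thm:ae-minkowski} with $\eta=i^{-1}$. Since $A_i-i^{-1}\ge A_i(1-i^{-1/2})$ there, we get for a.e.\ $s$
\[
A_i'\le -n^2|\mathbb S^n|^{2/n}(1-i^{-1/2})^{\frac{2(n-1)}{n}}A_i^{\frac{n-2}{n}}.
\]
Equivalently, $\frac{d}{ds}A_i^{2/n}=\frac2n A_i^{\frac{2-n}{n}}A_i'\le -2n|\mathbb S^n|^{2/n}(1-i^{-1/2})^{\frac{2(n-1)}{n}}$. This step is legitimate because $A_i^{2/n}$ is absolutely continuous where $A_i$ is bounded below. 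Integrating from $0$ to $s_i$ and dropping $A_i(s_i)^{2/n}\ge0$ gives
\[
s_i\le \frac{1}{2n}\Bigl(\frac{A_i(0)}{|\mathbb S^n|}\Bigr)^{2/n}(1-i^{-1/2})^{-\frac{2(n-1)}{n}}.
\]

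On $[s_i,T_i)$ we use Lemma \ref{lem:MS-inequality} instead, which gives $A_i'\le -c_{\mathrm{MS}}^2A_i^{\frac{n-2}{n}}$. Hence $\frac{d}{ds}A_i^{2/n}\le -\frac{2}{n}c_{\mathrm{MS}}^2$. Integrating to $T_i$, where $A_i\to0$ (since $A_i(T_i)=0$ and $A_i$ is continuous), gives
\[
T_i-s_i\le \frac{n}{2c_{\mathrm{MS}}^2}A_i(s_i)^{2/n}\le \frac{n}{2c_{\mathrm{MS}}^2}\,i^{-1/n}.
\]
Adding the two bounds yields \eqref{eq:Ti-estimate}.

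The case $n=1$ needs a separate check, because the exponent $\frac{n-2}{n}$ is negative and $\frac{d}{ds}A^{2}$ behaves differently. In that case Proposition \ref{thm:ae-minkowski} directly gives $A'\le-(2\pi)^2/A$, hence $\frac{d}{ds}A^2\le -8\pi^2$, and the same splitting works. For \eqref{eq:limsup-Ti}, use $A_i(0)=P(\Omega_i)\le P(\Omega^*)+i^{-1}$ from \eqref{eq:area-approximation} and let $i\to\infty$.

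The main obstacle is justifying the calculus almost everywhere. We need absolute continuity of $A_i$ near extinction, and the reduced-boundary slices must agree with the slices in Proposition \ref{thm:ae-minkowski} and Lemma \ref{lem:MS-inequality} for a.e.\ $s$ (Remark \ref{rmk:boundary-equivalence}). We also need that the inequalities on a full-measure set suffice to integrate the ODE for $A_i^{2/n}$ on each subinterval. The phrase ``sufficiently large $i$'' ensures $1-i^{-1/2}>0$.
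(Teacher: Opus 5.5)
Your proposal is correct and follows the paper's proof essentially step for step. It uses the same threshold time at which $A_i$ first drops to $i^{-1/2}$, then Proposition \ref{thm:ae-minkowski} with $\eta=i^{-1}$ together with Cauchy--Schwarz to integrate $\frac{d}{ds}A_i^{2/n}$ before that time, and then Lemma \ref{lem:MS-inequality} to bound the remaining tail by $\frac{n}{2c_{\mathrm{MS}}(n)^2}i^{-1/n}$. Your separate treatment of $n=1$ is harmless but not needed, since the same computation with $A_i^{(n-2)/n}=A_i^{-1}$ covers that case uniformly.
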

\begin{proof}
By Lemma \ref{lem:no-mass-drop}, $A_i$ is absolutely continuous, with
\begin{equation}\label{eq:area-equality-i}
A_i'(s)
=
-\int_{\Sigma_{i,s}}H_i^2\,d\mu_{i,s}
\end{equation}
for almost every \(s\in(0,T_i)\).
For each $i$, put \[\tau_i
:=
\inf\left\{
s\in[0,T_i]:
A_i(s)\leq i^{-1/2}
\right\}.\]
Since \(A_i(T_i)=0\), $\tau_i\in[0,T_i]$, and so by the continuity of \(A_i:[0,T_i]\rightarrow\mathbb{R}_{\geq0}\), we have
\[
A_i(\tau_i)\leq i^{-1/2}.
\]

For \(s<\tau_i\), we have \(A_i(s)>i^{-1/2}\), and hence
\begin{equation}\label{before zero}
A_i(s)-\frac{1}{i}>A_i(s)-i^{-1/2}A_i(s)
=\left(1-i^{-1/2}\right)A_i(s).
\end{equation}
Combining \eqref{eq:area-equality-i}, Jensen's inequality,
Proposition \ref{thm:ae-minkowski}, and \eqref{before zero}, we obtain
\begin{align*}
A_i'(s)
&\leq
-A_i(s)^{-1}
\left(
\int_{\Sigma_{i,s}}H_i\,d\mu_{i,s}
\right)^2
\\
&\leq
-A_i(s)^{-1}n^2|{\mathbb{S}^n}|^{{\frac{2}{n}}}
\left(A_{i}(s)-i^{-1}\right)^{\frac{2(n-1)}{n}}
\\
&\leq
-n^2|{\mathbb{S}^n}|^{{\frac{2}{n}}}
\left(1-i^{-1/2}\right)^{\frac{2(n-1)}{n}}
A_i(s)^{\frac{n-2}{n}}
\end{align*}
for a.e. $s\in[0,\tau_i)$. Therefore,
\begin{equation}\label{eq:area-odi}
\frac{d}{ds}A_i(s)^{{\frac{2}{n}}}
\leq
-2n|{\mathbb{S}^n}|^{{\frac{2}{n}}}
\left(1-i^{-1/2}\right)^{\frac{2(n-1)}{n}}
\end{equation}
for a.e. $s\in[0,\tau_i)$. Integrating from \(0\) to
\(\tau_i\), we obtain
\begin{equation}\label{eq:tau-i-bound}
\tau_i
\leq
\frac{1}{2n}
\left(
\frac{A_i(0)}{|{\mathbb{S}^n}|}
\right)^{{\frac{2}{n}}}
\left(1-i^{-1/2}\right)^{-\frac{2(n-1)}{n}}.
\end{equation}

It remains to control \(T_i-\tau_i\). Combining
\eqref{eq:area-equality-i}, Jensen's inequality, and
\eqref{eq:MS-inequality} in turn, we obtain
\begin{align*}
A_i'(s)
&\leq
-A_i(s)^{-1}
\left(
\int_{\Sigma_{i,s}}H_i\,d\mu_{i,s}
\right)^2
\\
&\leq
-A_i(s)^{-1}(c_{\mathrm{MS}}(n)
A_i(s)^{\frac{(n-1)}{n}})^2
\\
&\leq
-c_{\mathrm{MS}}(n)^2
A_i(s)^{\frac{n-2}{n}}
\end{align*}
for almost every \(s\in(\tau_i,T_i)\). Hence
\begin{equation}\label{eq:area-odi2}
\frac{d}{ds}A_i(s)^{{\frac{2}{n}}}
\leq
-\frac{2}{n}c_{\mathrm{MS}}(n)^2.
\end{equation}
Integrating up to any \(t<T_i\) and then letting \(t\rightarrow T_i\),
we obtain
\begin{equation}\label{eq:tail-bound}
T_i-\tau_i\leq
\frac{n}{2c_{\mathrm{MS}}(n)^2}A_i(\tau_i)^{{\frac{2}{n}}}\leq \frac{n}{2c_{\mathrm{MS}}(n)^2}i^{-1/n}.
\end{equation}

Combining \eqref{eq:tau-i-bound} and \eqref{eq:tail-bound} proves
\eqref{eq:Ti-estimate}.
Finally, the outward-minimizing property of $\Omega^*$ and
\eqref{eq:area-approximation} give
\[
P(\Omega^*)
\leq
P(\Omega_i)
\leq
P(\Omega^*)+\frac1i.
\]
Thus
\[
A_i(0)=\mathcal{H}^n(\{u_i=0\})=P(\Omega_i)\rightarrow P(\Omega^*),
\]
and so \eqref{eq:limsup-Ti} follows from (\ref{eq:Ti-estimate}).
\end{proof}

\section{Proof of the main results}\label{proofofmainresults}

In this section, we prove the main results about optimal extinction time estimates and optimal parabolic isoperimetric estimates for weak mean curvature flow in Theorem \ref{Optimal extinction time estimates for weak mean curvature flow}, Theorem \ref{lpestimates arrival time} and Theorem \ref{sharp parabolic estimates}.  To this end, we need the following proposition.

\begin{proposition}\label{thm:LSF-Brakke extinction optimal}
Let $\Omega\subset\R^{n+1}$  be a bounded domain with smooth boundary,
and let $\{K_t\}_{t\geq 0}$ be the level set flow starting from $\Omega$ and  $\{\mu_t\}_{t\geq 0}$ be an integral Brakke flow starting from the associated Radon measure $\mu_0=\mathcal{H}^n\llcorner\partial\Omega$,  for extinction times $T_{\mathrm{ext}}(\Omega)
:=
\inf\{t\geq0:K_t=\varnothing\}$ and $T^{B}_{\mathrm{ext}}(\Omega)
:=
\inf\{t\geq0:\mu_t\equiv0\},$
we have
\begin{equation}\label{eq:general-extinction6}
T_{\mathrm{ext}}(\Omega)
\leq
\frac{1}{2n}
\left(
\frac{P(\Omega^*)}{|{\mathbb{S}^n}|}
\right)^{\frac{2}{n}}
\end{equation}
and
\begin{equation}\label{eq:general-brakke-extinction6}
T^{B}_{\mathrm{ext}}(\Omega)
\leq
\frac{1}{2n}
\left(
\frac{P(\Omega^*)}{|{\mathbb{S}^n}|}
\right)^{\frac{2}{n}}
\end{equation}
where $P(\Omega^*)$ is the perimeter of the outward minimizing hull $\Omega^*$ of $\Omega$.
In addition, the equality in \eqref{eq:general-extinction6} holds,
if and only if $\Omega^*$ is a round ball. In particular, if $\Omega$ has only one smooth boundary component, then equality in \eqref{eq:general-extinction6} holds
if and only if $\Omega=\Omega^*$ is a round ball. Moreover, if $\Omega$ has only one smooth boundary component, then equality in \eqref{eq:general-brakke-extinction6} holds, if and only if $\Omega=\Omega^*$ is a round ball and the Brakke flow $\{\mu_t\}$ is the multiplicity-one standard  smoothly shrinking round sphere Brakke flow.
\end{proposition}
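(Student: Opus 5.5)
The inequalities come from comparison with the approximating flows of Lemma \ref{lem:hull-approximation} together with Proposition \ref{prop:approximate-extinction}; the equality cases need a separate rigidity argument.

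\textbf{Step 1: the upper bounds.} Since $\Omega\subset\Omega^*\Subset\Omega_i$, the avoidance principle for level set flows gives $K_t\subset K_{i,t}$ for all $t$. Hence $T_{\mathrm{ext}}(\Omega)\le T_i$, and \eqref{eq:limsup-Ti} yields \eqref{eq:general-extinction6}.

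For the Brakke flow I will use the support containment of integral Brakke flows in the level set flow (Ilmanen \cite{Ilm94}, Section 10). Since $\operatorname{spt}\mu_0=\partial\Omega\subset\overline{\Omega}\subset\Omega_i$, the support of $\mu_t$ stays inside the level set flow of $\overline\Omega_i$, which is the mean convex flow $K_{i,t}$. This level set flow is non-fattening, so it agrees with $K_{i,t}$. Thus $\mu_t\equiv0$ for $t>T_i$, and \eqref{eq:general-brakke-extinction6} follows in the same way.

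\textbf{Step 2: the ``if'' direction of equality.} If $\Omega^*$ is a round ball of radius $R$, then $P(\Omega^*)=|\mathbb S^n|R^n$, so the right-hand side equals $R^2/(2n)$. We need $T_{\mathrm{ext}}(\Omega)\ge R^2/(2n)$. When $\Omega$ has one boundary component and $\Omega=\Omega^*$, this is exact. In general, each component of $\partial\Omega$ that touches the sphere $\partial\Omega^*$ must satisfy $\partial\Omega^*\subset\overline\Omega$ in the contact region. Then $\Omega^*\setminus\Omega$ would be a region bounded partly by a minimal free part, but a sphere has no free part. So $\partial\Omega^*\subset\partial\Omega$, the outer boundary component is the sphere, and the flow contains the shrinking sphere. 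This forces $T_{\mathrm{ext}}\ge R^2/2n$.

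\textbf{Step 3: the ``only if'' direction of equality, which I expect to be the main obstacle.} Suppose $T_{\mathrm{ext}}(\Omega)$ equals the bound $T^*$. Then $T_i\ge T^*$ for all $i$. Set $\Phi_i(s)=A_i(s)^{2/n}+2n|\mathbb S^n|^{2/n}(1-i^{-1/2})^{\frac{2(n-1)}{n}}s$; by \eqref{eq:area-odi} it is nonincreasing on $[0,\tau_i)$. Combined with the tail bound \eqref{eq:tail-bound}, this forces near-equality everywhere. In particular, for small $s$ the quantities
\[
\int_{\Sigma_{i,s}}H_i^2-\frac{\bigl(\int_{\Sigma_{i,s}} H_i\bigr)^2}{A_i(s)}
\]
and the Minkowski deficit become small in $L^1(ds)$ as $i\to\infty$. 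Pass to the limit: $\Omega_i\to\Omega^*$, and the flows $K_{i,t}$ converge to the outward minimizing level set flow from $\Omega^*$. Along a sequence of times $s_j\to0$ chosen via Fubini, the slices have $L^2$ mean curvature with vanishing variance from its mean, so $H$ is constant $\mathcal H^n$-a.e. on $\partial^*K_{s_j}$ for the limit flow. I will then apply the weak Alexandrov theorem \cite{DM19} to conclude that $K_{s_j}$ is a finite union of disjoint equal balls. Equality in the ODE rules out more than one ball, since $k$ balls of total area $A$ extinguish strictly faster than one ball of area $A$. Letting $s_j\to0$ shows that $\Omega^*$ is a ball. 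The difficult part is justifying this limiting passage, i.e.\ that the deficits control $H$ in the limit slices, using Lemma \ref{strong slice conv}-type convergence.

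\textbf{Step 4: consequences.} If $\partial\Omega$ is connected and $\Omega^*$ is a ball, then the argument of Step 2 gives $\Omega=\Omega^*$. For the Brakke case, equality forces $T^B_{\mathrm{ext}}=R^2/2n$, where $\mu_t$ is supported in the shrinking sphere $\partial B_{\sqrt{R^2-2nt}}$. Brakke's inequality together with the fact that $\mu_0$ has unit density, plus the constancy theorem on the spheres, forces $\mu_t$ to be the multiplicity-one sphere. Any mass drop would produce extinction before $R^2/2n$.
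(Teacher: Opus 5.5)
Your Steps 1, 2 and 4 follow the paper: comparison with $\Omega_i$, Ilmanen's support inclusion, emptiness of the free part of a spherical hull, and the constancy theorem plus integrality for Brakke rigidity. The gap is in Step 3, at exactly the limiting passage you flag. As you have structured it, that step does not go through.

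You fix $s_j>0$, send $i\to\infty$, and want $\partial^*K^*_{s_j}$ of a limit flow to have constant mean curvature. Two things fail.
\begin{enumerate}
\item You only know $\chi_{\Omega_i}\to\chi_{\Omega^*}$ in $L^1$, with no Hausdorff or monotone convergence of $\Omega_i$. So identifying $\lim_i K_{i,s}$ with the level set flow of $\Omega^*$ is unproved, and so is $K^*_{s_j}\to\Omega^*$ as $s_j\to0$.
\item More seriously, even for an $L^1$-limit $E_s$ of $K_{i,s}$, lower semicontinuity gives only $P(E_s)\le\lim_i A_i(s)=A(s)$. If perimeter is lost (e.g.\ sheets of $\Sigma_{i,s}$ collapsing onto each other), the varifold limit of $\Sigma_{i,s}$ is not $|\partial^*E_s|$. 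Then the vanishing $L^2$-deviation of $H_i$ says nothing about the distributional mean curvature of $\partial^*E_s$, and \cite{DM19} cannot be applied. At a positive time you have no reverse inequality $P(E_s)\ge A(s)$; it would essentially require the extinction estimate for the non-smooth set $E_s$.
\end{enumerate}
Lemma \ref{strong slice conv} does not help here. It concerns $\eps\to0$ in the elliptic regularization of one fixed smooth domain, not the family $\Omega_i\to\Omega^*$.

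The missing idea is to let the time tend to $0$ together with $i$, so that the limit set is $\Omega^*$ itself, whose perimeter is known.
\begin{enumerate}
\item Show $\int_0^t\int_{\Sigma_{i,s}}(H_i-h(s))^2\,d\mu_{i,s}\,ds\to0$, where $h(s)=n/\sqrt{R^2-2ns}$. This follows by expanding the square and using Lemma \ref{lem:no-mass-drop}, the a.e.\ Minkowski inequality of Proposition \ref{thm:ae-minkowski}, and $A_i\to A$.
\item Choose $t_i\to0$ by averaging so that $\int_{\Sigma_{i,t_i}}(H_i-h(t_i))^2\,d\mu_{i,t_i}\to0$.
\item By co-area with $H_i=|Du_i|^{-1}$ and Cauchy--Schwarz, $|\Omega_i\setminus K_{i,t_i}|=\int_0^{t_i}\int_{\Sigma_{i,s}}H_i\le A_i(0)\sqrt{t_i}$.
\item Strict outward minimality of $\Omega^*$ gives $\chi_{\Omega_i}\to\chi_{\Omega^*}$. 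Combined with the previous step, $\chi_{K_{i,t_i}}\to\chi_{\Omega^*}$ in $L^1$.
\item The squeeze $P(\Omega^*)\le\liminf_i A_i(t_i)\le\lim_i A_i(0)=P(\Omega^*)$ rules out perimeter loss. Reshetnyak continuity \cite{Spe11} then upgrades to varifold convergence.
\item Testing with $\operatorname{div}X-\nu\cdot DX\,\nu-h(0)X\cdot\nu$ shows that $\partial^*\Omega^*$ has constant weak mean curvature $h(0)=n/R$.
\item \cite{DM19} then gives disjoint balls of radius $R$, and $P(\Omega^*)=|\mathbb S^n|R^n$ forces exactly one.
\end{enumerate}
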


\begin{proof}[Proof of Proposition  \ref{thm:LSF-Brakke extinction optimal}]
We apply Lemma \ref{lem:hull-approximation} to $\Omega^*$, to obtain the smooth approximations $\Omega_i$ whose extinction time $T_i$ satisfies Proposition \ref{prop:approximate-extinction}.
Let \(K_{i,t}\) be the closed region enclosed by the mean convex level-set flow starting from \(\Omega_i\) at time $t$. Since $\Omega^*
\subset
\Omega_i$, we have
\begin{equation}\label{eq:inclusion}
K_t \subset K_{i,t}
\end{equation}
for every \(t\geq0\).

For every \(t>T_i\), the level-set flow starting from \(\Omega_i\)
is empty, so \(K_{i,t}=\varnothing\). It follows from
\eqref{eq:inclusion} that 
$K_t=\varnothing$ for every $t>T_i$. Consequently, 
$T_{\mathrm{ext}}(\Omega)\leq T_i$ for every \(i\), and hence by (\ref{eq:limsup-Ti}) we get
\begin{equation}\label{TTilimit}
    T_{\mathrm{ext}}(\Omega)\leq \limsup_{i\to\infty}T_i\leq\frac{1}{2n}
\left(
\frac{P(\Omega^*)}{|{\mathbb{S}^n}|}
\right)^{{\frac{2}{n}}}.
\end{equation}
This proves \eqref{eq:general-extinction6}.

Now we discuss the equality case. %The rough idea is  deriving the convergence to spherical area dispassion  and  using it to show the mean convergence to constant mean curvature, and then applying  Alexandrov’s theorem. However, we have to handle convergence issue to make this idea rigorous. 
We first consider the equality case \eqref{eq:general-extinction6} of level set flow. If the outward minimizing hull $\Omega^*$ is a round ball which has mean convex boundary, the free part $\partial \Omega^*\setminus \partial \Omega$ is empty. $\Omega$ has a boundary component that coincides with the sphere $\partial\Omega^*$. Since $\Omega\subset \Omega^*$ The boundary component other than the sphere $\partial\Omega^*$ lives in the interior $\textrm{Int}(\Omega^*)$ of the ball $\Omega^*$. By avoidance principle and direct computation on the extinction time of $\Omega^*$, $\Omega$ achieves the equality in \eqref{eq:general-extinction6}. 

Then, suppose  the equality case of \eqref{eq:general-extinction6} holds, by \eqref{eq:inclusion} we have
\[T_i\geq T_{\rm ext}=\frac{1}{2n} \left(\frac{P(\Omega^*)}{|{\mathbb{S}^n}|}\right)^{{\frac{2}{n}}}\]
for every $i$.

Suppose we are given $t\in[0,T_\textrm{ext})$.
Following the same notations of the previous proposition,
by \eqref{eq:tail-bound} we have $t\leq \tau_i$ for all sufficiently large $i$.
For such $i$, from the differential inequality \eqref{eq:area-odi} we have
\begin{equation}\label{eq:area-squeeze}
A_i(\tau_i)^{{\frac{2}{n}}}+c_i(\tau_i-t)\leq A_i(t)^{{\frac{2}{n}}}\leq A_i(0)^{{\frac{2}{n}}}-c_i t,
\end{equation}
where \[c_i=2n|{\mathbb{S}^n}|^{{\frac{2}{n}}}\left(1-i^{-1/2}\right)^{\frac{2(n-1)}{n}}.\]
We have $\tau_i\to T_{\textrm{ext}}$, because $T_i-\tau_i\to 0$ by \eqref{eq:tail-bound}
and $T_i \to T_{\textrm{ext}}$ by \eqref{eq:Ti-estimate}.
Along with the fact that $A_i(0)\to P(\Omega^*)$
and $A_i(\tau_i)= i^{-1/2}$, we can take the limit as $i\to\infty$ in \eqref{eq:area-squeeze} to obtain
\[A_i(t)^{{\frac{2}{n}}}\to P(\Omega^*)^{{\frac{2}{n}}}-c t,\quad \text{with } c=2n|{\mathbb{S}^n}|^{{\frac{2}{n}}}.\]
If $R>0$ is the radius of a ball with perimeter $P(\Omega^*)$, i.e. $P(\Omega^*)=|{\mathbb{S}^n}|R^n$,
then we have the following convergence of $A_i(t)$ to the area of a shrinking sphere at time $t$:
\begin{equation}\label{eq:Ai-convergence}
A_i(t) \to A(t)\coloneq|{\mathbb{S}^n}|(R^2-2nt)^{n/2}.
\end{equation}
Using this information, we claim that for a.e. $t<T_{\textrm{ext}}$,
\begin{equation}\label{Hi-hintegral2to0}
\int_0^{t} \int_{\Sigma_{i,s}} (H_i-h(s))^2\,d\mu_{i,s}\,ds \to 0\quad \text{as }i\to\infty,
\end{equation}
where $h(s)$ is the mean curvature of a shrinking sphere at time $s$:
\[h(s)=\frac{n}{\sqrt{R^2-2ns}}.\]
Indeed, expanding the square and applying Lemma \ref{lem:no-mass-drop},
\begin{align*}
&\quad\int_0^{t} \int_{\Sigma_{i,s}} (H_i-h(s))^2\,d\mu_{i,s}\,ds\\
&=A_i(0)-A_i(t)-2\int_0^{t} h(s)\int_{\Sigma_{i,s}} H_i\,d\mu_{i,s}\,ds+\int_0^{t} h(s)^2A_i(s)\,ds.
\end{align*}
Here, recall the inequality from Proposition \ref{thm:ae-minkowski}, which holds for almost every $s\in(0,t)$:
\[\int_{\Sigma_{i,s}} H_i\,d\mu_{i,s}\geq n|{\mathbb{S}^n}|^{1/n}(A_i(s)-i^{-1/2})^{(n-1)/n}.\]
Plugging above and sending $i\to\infty$, by \eqref{eq:Ai-convergence} we obtain
\begin{align*}
\limsup_{i\to\infty}\int_0^{t} \int_{\Sigma_{i,s}} &(H_i-h(s))^2\,d\mu_{i,s}\,ds\leq A(0)-A(t)-\int_0^t h(s)^2 A(s) ds = 0,
\end{align*}
proving the claim.

Now, fix a sequence $\eta_i\to0$ of times that satisfy \eqref{Hi-hintegral2to0}. By taking $\delta_i\to0$ with sufficiently slow convergence so that
\[\int_0^{\eta_i} \int_{\Sigma_{i,s}} (H_i-h(s))^2\,d\mu_{i,s}\,ds\ll\delta_i,\]
by averaging we can find times $0<t_i<\delta_i$  such that 
\begin{equation}\label{eq:mean-curvature-convergence}
\int_{\Sigma_{i,t_i}} (H_i-h(t_i))^2 \,d\mu_{i,t_i}\leq2\delta_i^{-1}\int_0^{\delta_i} \int_{\Sigma_{i,s}} (H_i-h(s))^2\,d\mu_{i,s}\,ds\to 0.
\end{equation}
We aim to show that
\begin{equation}\label{eq:chi-convergence}
    \chi_{K_{i,t_i}}\to \chi_{\Omega^*}\quad\text{in }L^1(\mathbb R^{n+1}),
\end{equation}
and pass \eqref{eq:mean-curvature-convergence} to the limit
to show that $\partial^*\Omega^*$ has constant mean curvature.
First, recall that
\[
  \Omega^*\subset\Omega_i,
  \qquad
  P(\Omega^*)\le P(\Omega_i)\le P(\Omega^*)+i^{-1}.
\]
By local $BV$ compactness and lower
semicontinuity of perimeter, every subsequence of $(\Omega_i)$ has a further
subsequence (which we still denote by $(\Omega_i)$) such that $\chi_{\Omega_i}\to\chi_F$ in
$L^1_{\mathrm{loc}}$, with
\[
  \Omega^*\subset F\quad\text{up to a null set},
  \qquad
  P(F)\le\liminf_{i\to\infty}P(\Omega_i)=P(\Omega^*).
\]
Since $\Omega^*$ is strictly outward minimizing, $F=\Omega^*$ up to a null set.
Note that there exists a large ball containing all $\Omega_i$.
Consequently, the whole sequence converges:
\begin{equation}\label{eq:chi-omega-convergence}
  \chi_{\Omega_i}\to\chi_{\Omega^*}
  \quad\text{in }L^1(\mathbb R^{n+1}).
\end{equation}

Next, we claim that $|\Omega_i\setminus K_{i,t_i}|\to0$.
As $\Omega_i$ is mean convex, the associated level set function $u_i$ satisfies
$|Du_i|>0$ almost everywhere and $H_i=|Du_i|^{-1}$ on almost
every level set; see
\cite[Lemma 2.3, Theorem 1.3, and Lemma 4.4]{MS08}.
The co-area formula therefore gives
\[
  |\Omega_i\setminus K_{i,t_i}|
  =\int_0^{t_i}\int_{\Sigma_{i,s}}
       H_i\,d\mu_{i,s}\,ds.
\]
Applying Cauchy--Schwarz inequality and Lemma \ref{lem:no-mass-drop}, we obtain
\begin{align*}
  |\Omega_i\setminus K_{i,t_i}|
  &\le
  \left(\int_0^{t_i}A_i(s)\,ds\right)^{1/2}
  \left(\int_0^{t_i}\int_{\Sigma_{i,s}}
             H_i^2\,d\mu_{i,s}\,ds\right)^{1/2}\\
  &\le
  \bigl(t_iA_i(0)\bigr)^{1/2}
  \bigl(A_i(0)-A_i(t_i)\bigr)^{1/2}\\
  &\le A_i(0)\sqrt{t_i}\to0.
\end{align*}
Since $K_{i,t_i}\subset {\Omega_i}$, this convergence together with \eqref{eq:chi-omega-convergence}
 implies the desired convergence \eqref{eq:chi-convergence}.
This yields a useful corollary for the area of the level sets at time $t_i$.
By \eqref{eq:chi-convergence}, lower semicontinuity of perimeter, and the monotonicity of perimeter under mean convex mean curvature flow, we have
\[
  P(\Omega^*)
  \le\liminf_{i\to\infty}A_i(t_i)
  \le\limsup_{i\to\infty}A_i(t_i)
  \le\lim_{i\to\infty}A_i(0)=P(\Omega^*),
\]
thus
\begin{equation}\label{eq:aiti-convergence}
    A_i(t_i)\to P(\Omega^*).
\end{equation}

Write $\lambda_i$ and $\lambda$ for the Gauss-Green measures of
$K_{i,t_i}$ and $\Omega^*$, respectively. Namely, $\lambda_i := \nu_i\,\mathcal{H}^n\llcorner \partial^*K_{i,t_i},
\lambda := \nu\,\mathcal{H}^n\llcorner \partial^*\Omega^*$ and $\nu_i, \nu$ are the corresponding measure-theoretic outward unit normal.

For every vector field $X\in C_c^1(\mathbb R^{n+1};\mathbb R^{n+1})$, we have
\[
  \int X\cdot d(\lambda_i-\lambda)
  =\int_{\mathbb R^{n+1}}
       (\chi_{K_{i,t_i}}-\chi_{\Omega^*})
       \operatorname{div}X\,dx
  \to0.
\]
Noting that
\[
  |\lambda_i|(\mathbb R^{n+1})=A_i(t_i)
  \to P(\Omega^*)
  =|\lambda|(\mathbb R^{n+1}),
\]
the total variation of $\lambda_i$ is uniformly bounded.
By uniform approximation, for $X\in C_0$ we have
\[\int X\cdot d\lambda_i\to\int X\cdot d\lambda,\]
hence $\lambda_i\rightharpoonup\lambda$ as vector-valued Radon measures.
Together with \eqref{eq:aiti-convergence}, we can apply
Reshetnyak's continuity theorem
\cite[Theorem 1.3]{Spe11} to $\lambda_i$ and $\lambda$. This gives
\begin{equation}\label{eq:reshetnyak-normal-convergence}
  \int_{\Sigma_{i,t_i}}f(x,\frac{d\lambda_i}{d|\lambda_i|})\,d\mu_{i,t_i}
  \to
  \int_{\partial^*\Omega^*}f(x,\frac{d\lambda}{d|\lambda|})\,d\mathcal H^n
\end{equation}
for every
$f\in C_c(\mathbb R^{n+1}\times{\mathbb{S}^n})$.
Since the Radon-Nikodym derivatives $d\lambda_i/d|\lambda_i|$, $ d\lambda/d|\lambda|$ are
unit normals of $\Sigma_{i,t_i}, \partial^*\Omega^*$ respectively, taking $f(x,\nu)=\varphi(x,\nu^\perp)$
gives convergence of the rectifiable varifolds
\[\Sigma_{i,t_i} \to \partial^*\Omega^*.\]

To identify the limiting mean curvature, fix
$X\in C_c^1(\mathbb R^{n+1};\mathbb R^{n+1})$ and set
\begin{equation}\label{fXtest}
    f_X(x,\nu)
  :=\operatorname{div}X(x)-\nu\cdot DX(x)\nu
    -h(0)X(x)\cdot\nu.
\end{equation}
Note that the second term eliminates the normal component of the ambient divergence.
Then, by definition of weak mean curvature,
\begin{align}\label{divxhxnu}
  \int_{\Sigma_{i,t_i}}f_X(x,\nu_i)\,d\mu_{i,t_i}
  &= \int_{\Sigma_{i,t_i}} \operatorname{div}_{\Sigma_{i,t_i}}X
  -h(0)X\cdot\nu_i\,d\mu_{i,t_i}\\
  &=\int_{\Sigma_{i,t_i}}
       (H_i-h(0))X\cdot\nu_i\,d\mu_{i,t_i}
\end{align}
The absolute value of the last integral is bounded by
\[
  \|X\|_\infty\left[
    A_i(t_i)^{1/2}
    \left(\int_{\Sigma_{i,t_i}}
       |H_i-h(t_i)|^2\,d\mu_{i,t_i}\right)^{1/2}
    +|h(t_i)-h(0)|A_i(t_i)
  \right],
\]
which tends to zero due to \eqref{Hi-hintegral2to0} and definition of $h(t)$ and $t_i\to 0$. Applying
\eqref{eq:reshetnyak-normal-convergence} to $f_X$ and the integral in \eqref{divxhxnu} converging to $0$,
we conclude that
\begin{equation}\label{weakmc}
\int_{\partial^*\Omega^*}
       \operatorname{div}_{\partial^*\Omega^*}X\,d\mathcal H^n
  =
  h(0)\int_{\partial^*\Omega^*}
       X\cdot\nu\,d\mathcal H^n.
\end{equation}
Thus, by comparing \eqref{weakmc} and the definition of weak mean curvature of finite perimeter set, we know that $\partial^*\Omega^*$ has constant weak mean curvature $h(0)$
with respect to the outward unit normal.
By Delgadino--Maggi's generalized Alexandrov's theorem \cite{DM19},
The boundary $\partial^*\Omega^*$ of $\Omega^*$ is finitely many union of spheres with equal radius.
Considering the equality case assumption, if
\[T_{\rm ext}(\Omega)= \frac{1}{2n}\left(\frac{P(\Omega^*)}{|{\mathbb{S}^n}|}\right)^{{\frac{2}{n}}},\]
$\Omega^*$ is only one  round ball. 

Now if $\partial \Omega$ has only one component and the equality holds, we have $\partial \Omega^*$ is a sphere. By the minimizing-hull obstacle property, the free part $\partial\Omega^* \setminus \partial\Omega$
is area minimizing and hence minimal (see 
\cite{HuiskenIlmanen2001} or
\cite[Theorem 5.3(ii)]{AFM22}). Since $\partial\Omega^*$ is a round
sphere and therefore has strictly positive mean curvature, the free
part must be empty. Hence the sphere 
$\partial\Omega^*$ is an open and closed  subset of $\partial\Omega.$
Since $\partial\Omega$ is connected, it follows that $\partial\Omega=\partial\Omega^*$,
and consequently $\Omega=\Omega^*$
is a round ball. This completes the proof of \eqref{eq:general-extinction6} and its equality case.

Finally, by the support inclusion fact that the support of Brakke flow is contained inside the level set flow \cite[Theorem 10.7]{Ilm94} and \eqref{eq:general-extinction6}, we obtain \eqref{eq:general-brakke-extinction6}. Now, we discuss the rigidity of \eqref{eq:general-brakke-extinction6} in the situation $\partial \Omega$ has only one component. Direct computation shows the multiplicity-one standard smoothly
shrinking round sphere Brakke flow achieves the equality in \eqref{eq:general-brakke-extinction6}. Conversely, if the equality in \eqref{eq:general-brakke-extinction6} holds,   by the above the support inclusion fact and equality case of   \eqref{eq:general-extinction6}, we have $\Omega=\Omega^*=B_R(x_0)$ for some $R>0$. Let  $M_t=\partial B_{r(t)}(x_0)$ with $r(t)=\sqrt{R^2-2nt}$.

Since the associated varifold $V_t$ of Brakke flow $\{\mu_t\}$ is integral and $\operatorname{spt}\mu_t\subset M_t$, there exists an
integer-valued measurable function $\theta_t$ such that
$V_t=\theta_t |M_t|$.
Since the generalized mean curvature is normal to the approximate tangent
plane, $H_t\perp TM_t$ for $\mu_t$-a.e.\ $x$. Hence, for every compactly
supported smooth vector field $X$ tangent to $M_t$,
\begin{equation}
     0
    =
    -\int H_t\cdot X\,d\mu_t
    =
    \delta V_t(X)
    =
    \int_{M_t}\theta_t\,\operatorname{div}_{M_t}X\,d\mathcal H^n.
\end{equation}
Thus $D_{M_t}\theta_t=0$ in the sense of distributions. Since $M_t$ is
connected, $\theta_t$ is constant $\mathcal H^n$-a.e.\ on $M_t$.
Consequently,
\begin{equation}
     V_t=m(t)|M_t|,
    \qquad\mu_t=m(t)\cH^n\llcorner M_t,\qquad
    m(t)\in\mathbb N\cup\{0\}.
\end{equation}
By Brakke's inequality in the definition of Brakke flow and the above fact $m(t)\in \mathbb{N}\cup\{0\}$, we have $m(t)\in \{0, 1\}$. If $m(t_0)=0$ for some $t_0<(2n)^{-1}P(\Omega^*)^{2/n}|{\mathbb{S}^n}|^{-2/n}$, then $T_{\textrm{ext}}(\{\mu_t\}, \Omega)<P(\Omega^*)^{2/n}|{\mathbb{S}^n}|^{-2/n}$, which contradicts with equality of \eqref{eq:general-brakke-extinction6}.  Hence, the integral Brakke flow $\{\mu_t\}$ is the multiplicity-one standard  smoothly shrinking round sphere Brakke flow.  This completes the  proof of Proposition \ref{thm:LSF-Brakke extinction optimal}.
\end{proof}

Now, we are ready to give a proof of Theorem \ref{Optimal extinction time estimates for weak mean curvature flow}.
\begin{proof}[Proof of Theorem \ref{Optimal extinction time estimates for weak mean curvature flow}]
By Proposition  \ref{thm:LSF-Brakke extinction optimal} and $P(\Omega^*)\leq P( \Omega)$, we have that \eqref{C1} and \eqref{C2} hold.

Then, the direct computation shows if $\Omega$ is round ball, equality in \eqref{C1} holds.  If $\Omega$ is a round ball and the Brakke flow $\{\mu_t\}$ is the multiplicity-one standard  smoothly shrinking round sphere Brakke flow, \eqref{C2} hold. On the other hand, if equalities in \eqref{C1}, \eqref{C2} hold, together with $P(\Omega^*)\leq P(\Omega)$,
\begin{equation}
    T_{\rm ext}(\Omega)\leq  \frac{1}{2n}\left(\frac{P(\Omega^*)}{|{\mathbb{S}^n}|}\right)^{{\frac{2}{n}}}\leq \frac{1}{2n}\left(\frac{P(\Omega)}{|{\mathbb{S}^n}|}\right)^{{\frac{2}{n}}}=  T_{\rm ext}(\Omega),
\end{equation}
we have 
\begin{equation}\label{P*=P}
    P(\Omega^*)=P(\Omega).
\end{equation}
Now we claim that $\partial\Omega$ has only one connected component. Suppose towards a contradiction if $\partial \Omega$ has more than one component, because $\partial\Omega$ is compact and smooth, it has finitely many
connected components:
\[
\partial\Omega
=
\Gamma_0 \sqcup \Gamma_1 \sqcup \cdots \sqcup \Gamma_{m},\quad m\geq 1
\]
and
\[
P(\Omega)
=
\mathcal{H}^n(\partial\Omega)
=
\sum_{j=0}^{m}\mathcal{H}^n(\Gamma_j).
\]
Let $C_\infty$ denote the unique unbounded connected component of $\mathbb{R}^{n+1}\setminus{\Omega}$.  Because the domain $\Omega$ itself is connected,
there is exactly one boundary component, say $\Gamma_0$, that faces
the unbounded complementary component $C_\infty$. Thus
$\partial C_\infty=\Gamma_0$ and
the remaining components $
\Gamma_1,\ldots,\Gamma_{m}$
surround bounded complementary components, i.e.\ holes of $\Omega$.

Now we let $\widehat{\Omega}
:=
\mathbb{R}^{n+1}\setminus{C_\infty}.$
Then $\widehat{\Omega}$ containing $\Omega$ is precisely $\Omega$ with all its bounded
complementary components filled in. Its boundary consists only of the
outer boundary $\partial\widehat{\Omega}=\Gamma_0$.
Consequently, by $m\geq 1$ we have
\[
P(\Omega^*)\leq P(\widehat{\Omega})
=
\mathcal{H}^n(\Gamma_0)<\mathcal{H}^n(\Gamma_0)
+
\sum_{j=1}^{m}\mathcal{H}^n(\Gamma_j)
=
P(\Omega).
\]
This contradicts \eqref{P*=P}, which implies $\partial \Omega$ is connected. By the above claim, the conclusions that  $\Omega$ is a round ball and the flow is standard multiplicity one smoothly shrinking round sphere follow from equality cases of Proposition  \ref{thm:LSF-Brakke extinction optimal}. This completes the proof of Theorem \ref{Optimal extinction time estimates for weak mean curvature flow}.
\end{proof}

Now, we give the proof of Theorem \ref{lpestimates arrival time} on the $L^p$ estimates of arrival time function $u$ of  smooth bounded mean convex domain.
\begin{proof}[Proof of Theorem \ref{lpestimates arrival time}]
Since the extinction time can be
interpreted as the $L^{\infty}$ norm of arrival time function, the optimal $L^\infty$ estimates of arrival time function $u$ in \eqref{eqlinfty} and its equality case directly follow from Theorem \ref{Optimal extinction time estimates for weak mean curvature flow}.

Then, for optimal $L^p$-estimates, we notice that for the arrival time function $u^*$ of $ \Omega^*$ as the outward minimizing hull of $\Omega$, $0 \leq u \leq u^*$ on $ \Omega$, so
\begin{equation}\label{uu*lp}
    \|u\|_{L^p(\Omega)}
\leq
\|u^*\|_{L^p(\Omega^*)}.
\end{equation}

Let $  K_t^*
:=
\{x\in\Omega^*:u^*(x)\geq t\}$, which is the level set flow starting from $\Omega^*$. Since $\Omega^*$ is the outward minimizing hull, $K_t^*$ is outward minimizing mean convex level set flow.   By the layer-cake formula and   sharp Euclidean isoperimetric inequality for set with finite Lebesgue measure  \cite[Theorem 14.1]{Mag12}, we have
\begin{align}\label{layercakeiso*}
     \int_{\Omega^*}(u^*)^p\,dx
&=
p\int_0^{T^{*}_{\textrm{ext}}} t^{p-1}|K_t^*|\,dt\leq \frac{p}{(n+1)|\mathbb{S}^n|^{\frac{1}{n}}}
\int^{T^{*}_{\textrm{ext}}}_{0} t^{p-1}P(K_t^*)^{\frac{n+1}{n}}dt,
\end{align}
where $T^{*}_{\textrm{ext}}$ is the extinction time of $K^*_t$ starting from $\Omega^*$. 

By Lemma \ref{lem:hull-approximation}, there exist a sequence of smooth strictly mean-convex domains $\Omega_i$ such that 
\begin{equation}\label{inclusionrelation omega}
    \Omega\subset\Omega^* \Subset \Omega_i
\end{equation}
and
\begin{equation}\label{Pomegetoomega*}
    P(\Omega^*)\leq P(\Omega_i)\leq P(\Omega^*)+i^{-1}
\end{equation}
Let $K_{i,t}$ be the mean-convex level-set flow starting from $\Omega_i$, and set
\begin{equation}
    A_i(t):=P(K_{i,t}).
\end{equation}
By integrating \eqref{eq:area-odi}, \eqref{eq:area-odi2} and extending $A_i(t)$ by zero beyond extinction,
we have
\begin{equation}\label{A_ibound1.4}
A_i(t)
\leq
\left[
P(\Omega_i)^{2/n}-c_i t
\right]_{+}^{n/2}
+\delta_i,
\qquad t\geq 0,
\end{equation}
where
\begin{equation}
\delta_i=i^{-1/2},
\qquad
c_i
=
2n|\mathbb{S}^n|^{2/n}
(1-\delta_i)^{2(n-1)/n}.
\end{equation}

By \eqref{inclusionrelation omega} and avoidance principle for level-set flow gives
\begin{equation}
K^*_t\subset K_{i,t}.
\end{equation}
Consequently, by the outward minimizing property of $K^*_t$, we have
\begin{equation}
P(K_t^*)\leq P(K_{i, t}^*)=A_i(t).
\end{equation}
Hence, by \eqref{A_ibound1.4}, we have
\begin{align}
    &\quad \frac{p}{(n+1)|\mathbb{S}^n|^{\frac{1}{n}}}
\int^{T^{*}_{\textrm{ext}}}_{0} t^{p-1}P(K_t^*)^{\frac{n+1}{n}}dt\\\nonumber
&\leq  \frac{p}{(n+1)|\mathbb{S}^n|^{\frac{1}{n}}}
\int^{T_i}_{0} t^{p-1}\left\{\left[
P(\Omega_i)^{2/n}-c_i t
\right]_{+}^{n/2}
+\delta_i \right\}^{\frac{n+1}{n}}dt
\end{align}
Then, by  the estimates \eqref{TTilimit} on the  extinction time $T_i$ of level set flow $K_{i, t}$ starting from $\Omega_i$ in the proof of Theorem \ref{Optimal extinction time estimates for weak mean curvature flow}  and \eqref{Pomegetoomega*}, integrating and sending $i\to \infty$, we obtain 
\begin{align}\label{limitingineq}
    &\quad \frac{p}{(n+1)|\mathbb{S}^n|^{\frac{1}{n}}}
\int^{T^{*}_{\textrm{ext}}}_{0} t^{p-1}P(K_t^*)^{\frac{n+1}{n}}dt\\\nonumber
&\leq
\frac{1}{(2n)^p}
\left[
\frac{1}{2}
\mathrm{B}\!\left(\frac{n+1}{2},\,p+1\right)
\right]
|\mathbb{S}^n|^{-\frac{2p+1}{n}}
P(\Omega^*)^{\frac{n+1+2p}{n}},
\end{align}
where $\mathrm{B}(\cdot, \cdot)$ 
is the standard {Beta} function defined in \eqref{eqn:beta}.  Then, by \eqref{uu*lp}, \eqref{layercakeiso*}, \eqref{limitingineq} and $P(\Omega^*)\leq P(\Omega)$ we have
\begin{align}\label{eqlp2}
\int_{\Omega}u^p dx
&\leq \int_{\Omega^*}(u^*)^p dx \leq \frac{p}{n+1}
|\mathbb{S}^n|^{-\frac{1}{n}}
\int^{T^{*}_{\textrm{ext}}}_{0} t^{p-1}P(K_t^*)^{\frac{n+1}{n}}dt, \\\notag
&\leq
\frac{1}{(2n)^p}
\left[
\frac{1}{2}
\mathrm{B}\!\left(\frac{n+1}{2},\,p+1\right)
\right]
|\mathbb{S}^n|^{-\frac{2p+1}{n}}
P(\Omega^*)^{\frac{n+1+2p}{n}}\\\notag 
&\leq
\frac{1}{(2n)^p}
\left[
\frac{1}{2}
\mathrm{B}\!\left(\frac{n+1}{2},\,p+1\right)
\right]
|\mathbb{S}^n|^{-\frac{2p+1}{n}}
P(\Omega)^{\frac{n+1+2p}{n}}.
\end{align}
%\beomjun{Here, after a linear change of variables in the time integral, we use
%the Beta function
%\begin{equation}\label{eqn:beta}
%\mathrm{B}(a,b):=\int_0^1 s^{a-1}(1-s)^{b-1}\,ds,
%\qquad a,b>0.
%\end{equation}}
Then, the optimal $L^p$-estimates of arrival time function $u$ in \eqref{eqlp} follows.

Now, if $\Omega$ is a ball, by direct computation  \eqref{eqlp} holds. If the equality case of \eqref{eqlp} holds, we have  $P(\Omega)=P(\Omega^*)$, so $\Omega$ is smooth outward minimizing and in particular  $\Omega$ is mean convex. Hence $K_t$ is also outward minimizing which is preserved under level set flow \cite[Theorem 3.5]{Whi00}. By the argument in the proof of Theorem \ref{Optimal extinction time estimates for weak mean curvature flow}, we also know  that $\Omega$ has only one smooth boundary component. 

Because the outward minimizing flow $K_t$ has smooth initial data $\Omega$ and is strictly mean convex flow at positive time by strong maximum principle, we can apply the Minkowski inequality for almost every time slice of outward minimizing level flow with smooth initial data in Proposition \ref{thm:ae-minkowski} with $\eta=0$. Then, let $A(t)=P(K_t)$, we have
 \be \frac{d}{dt}A(t) \leq 
    -n^2|\mathbb{S}^n|^{\frac2n}A(t)^{\frac{n-2}n} \qquad \text{for a.e. } t, \ee
so by the monotone property of $A(t)$, we can integrate and get
\begin{equation}
A(t) \leq
S(t)=\left[
P(\Omega)^{{\frac{2}{n}}}
- 2n |{\mathbb{S}^n}|^{{\frac{2}{n}}} t
\right]_+^{n/2}.
\end{equation}
Hence, we have
\begin{align}
\int_{\Omega}u^p dx&\leq \frac{p}{n+1}
|\mathbb{S}^n|^{-\frac{1}{n}}
\int^{T_{\textrm{ext}}(\Omega)}_{0} t^{p-1}A(t)^{\frac{n+1}{n}}dt, \\\notag
&\leq
 \frac{p}{n+1}
|\mathbb{S}^n|^{-\frac{1}{n}}
\int^{T_{*}}_{0} t^{p-1}S(t)^{\frac{n+1}{n}}dt, \\\notag
&=
\frac{1}{(2n)^p}
\left[
\frac{1}{2}
\mathrm{B}\!\left(\frac{n+1}{2},\,p+1\right)
\right]
|\mathbb{S}^n|^{-\frac{2p+1}{n}}
P(\Omega)^{\frac{n+1+2p}{n}}=\int_{\Omega}u^p dx,
\end{align}
where
\begin{equation}
    T_{\textrm{ext}}(\Omega)\leq T_* = \frac{1}{2n}
\left(\frac{P(\Omega)}{|{\mathbb{S}^n}|}\right)^{{\frac{2}{n}}}.
\end{equation}
Therefore, we have
\begin{equation}
 \int_0^{T_{\textrm{ext}}(\Omega)} t^{p-1}(S(t)^{\frac{n+1}{n}}-A(t)^{\frac{n+1}{n}})\,dt+\int_{T_{\textrm{ext}}(\Omega)}^{T_*} t^{p-1}S(t)^{\frac{n+1}{n}}\,dt=0,
\end{equation}
which implies
\begin{equation}
T_{\mathrm{ext}}(\Omega)
=T_*=
\frac{1}{2n}
\left(
\frac{P(\Omega^*)}{|{\mathbb{S}^n}|}
\right)^{\frac{2}{n}}
\end{equation}
holds. By the equality case of optimal extinction time estimates in Theorem \ref{Optimal extinction time estimates for weak mean curvature flow}, we know that $\Omega$ is a round ball and the flow is standard  smoothly shrinking round sphere.
This completes the proof of Theorem \ref{lpestimates arrival time}.
\end{proof}
Finally, we prove optimal  parabolic isoperimetric estimates for the space-time
track of outward minimizing mean curvature  flow in  Theorem \ref{sharp parabolic estimates}. 
\begin{proof}[Proof of Theorem \ref{sharp parabolic estimates}]
It suffices to prove for the case when $\partial\Omega$ is strictly mean convex outward minimizing.
This is because $\partial\Omega$ being outward minimizing and therefore mean convex implies that its positive time slices of the level set flow are strictly mean convex for a short time, as it coincides with smooth mean curvature flow.

For outward minimizing level set flow $K_t$ starting from $K_0$ with smooth hypersurface $\partial K_0=\partial \Omega$ as its boundary, it is  mean convex level set  flow with finite perimeter. Hence it is non-fattening level set flow by \cite[Corollary 3.3]{Whi00}, \cite{hershkovits2020nonfattening}. By the result of  Ilmanen in \cite[Theorem 11.4]{Ilm94} 
 (see also \cite[Theorem  2.3]{bernstein2018topology}), there is a unique  canonical matching motion Brakke flow $(T,\{\mu_t\}_{t\geq0})$, where $T$ is the unique undercurrent, $\mu_t$ is a unit density matching Brakke motion and in particular
\begin{equation}\label{matching motion condition}
\mu_{T_t}=\mu_t
=
\mathcal{H}^n\llcorner\partial^*K_t \quad\,\,\textrm{for}\,\, \mathrm{a.e}. t,
\end{equation}
where in our mean convex non-fattening level set flow situation, the sliced current $T_t$ of undercurrent $T$ satisfies
\begin{equation}
    T_t 
= \partial [K_t] \quad\,\,\textrm{for}\,\, \mathrm{a.e}. t.
\end{equation}
Then,
by the  property \eqref{matching motion condition} and 
 De Giorgi's characterization of perimeter $
P(K_t)
=
|D\chi_{K_t}|(\mathbb R^{n+1})
=
\mathcal H^n(\partial^*K_t)$ in \cite[Theorem 15.9]{Mag12}, 
we have
\begin{align}\label{mass area perimeter relation}
\mathbf{M}(T_t)
:=\mu_{T_t}(\mathbb{R}^{n+1})=
\mu_t(\mathbb{R}^{n+1})
=
P(K_t) \quad\,\,\textrm{for}\,\, \mathrm{a.e}. t.
\end{align}
Then, by the parabolic co-area formula in \cite[Theorem D]{hershkovits2017isoperimetric}, \cite[Example 3.5]{hershkovits2017isoperimetric} and \eqref{mass area perimeter relation}, we have
\begin{align}\label{eq:general-parabolic}
\mathcal{H}_{\textrm{par}}^{n+2}(X)
= \frac{\pi}{2(n+2)} \int_0^\infty \mathbf{M}[T_t]\,dt= \frac{\pi}{2(n+2)} \int_0^\infty P(K_t)\,dt,
\end{align}
where  $X\subset\R^{1, n+1}$ is the
rectifiable set associated to current $T$.

Now, we let $A(t)=P(K_t)$. We note that $\Omega$ is mean convex, since it is outward minimizing. Putting suitable cutoff function in Lemma \ref{lem:no-mass-drop} for mean convex level set flow, we obtain
\begin{equation}
   A'(t)   =
        -\int_{\partial K_t} |\vec H|^2\,d\mathcal{H}^n\leq -A(t)^{-1}\left(\int_{\partial K_t}H\right)^2
        \qquad \text{for a.e. } t.
\end{equation}
Then, by the Minkowski inequality for outward minimizing level flow with smooth initial data \footnote{Here we do not need the approximation argument because in  Theorem \ref{sharp parabolic estimates}, Proposition  \ref{thm:LSF-Brakke extinction optimal}, we assumed smoothness of outward minimizing initial data and the proof of Lemma \ref{thm:ae-minkowski}  is based  on elliptic regularization argument.} in Lemma \ref{thm:ae-minkowski} with $\eta=0$ as well as the fact that  outward minimizing level flow is strictly mean convex at positive time by strong maximum principle, we have
 \be \frac{d}{dt}A(t) \leq 
    -n^2|\mathbb{S}^n|^{\frac2n}A(t)^{\frac{n-2}n} \qquad \text{for a.e. } t. \ee
Hence,
\begin{equation}
A(t) \leq
\left[
A(0)^{{\frac{2}{n}}}
- 2n |{\mathbb{S}^n}|^{{\frac{2}{n}}} t
\right]^{n/2}.
\end{equation}
and for outward minimizing flow we again obtain 
\be T_{\textrm{ext}} \leq \frac{1}{2n}\left(\frac{A(0)}{|\mathbb{S}^n|}\right)^{{\frac{2}{n}}}\ee
as in Proposition  \ref{thm:LSF-Brakke extinction optimal}.

Combining the above two estimates, we obtain
\be\label{intatestimates}
\int_0^{T_{\textrm{ext}}} A(t)\,dt\leq \int_0^{T_{\textrm{ext}}}\left[
A(0)^{{\frac{2}{n}}}
- 2n |{\mathbb{S}^n}|^{{\frac{2}{n}}} t
\right]^{n/2} dt \leq
\frac{A(0)^{1+2/n}}
{n(n+2)|{\mathbb{S}^n}|^{{\frac{2}{n}}}},
\ee
and
\begin{align}
\mathcal{H}_{\textrm{par}}^{n+2}(X)
= \frac{\pi}{2(n+2)} \int_0^\infty P(K_t)\,dt\leq \frac{\pi P(\Omega)^{\frac{n+2}{n}}}
{2n(n+2)^2|{\mathbb{S}^n}|^{{\frac{2}{n}}}},
\end{align}
which verifies the estimates \eqref{eq:sharp-parabolic-matching}. 

Then, we discuss the equality case. By direct computation, it is easy to see the spherical level set flow achieves the equality in \eqref{eq:sharp-parabolic-matching}. Now if the equality in \eqref{eq:sharp-parabolic-matching} holds, we have 
\begin{align}
\int_0^{T_{\textrm{ext}}}A(t)\,dt=
\frac{P(\Omega)^{\frac{n+2}{n}}}
{n(n+2)|{\mathbb{S}^n}|^{{\frac{2}{n}}}}=
\int_0^{T_*}
\left[
P(\Omega)^{{\frac{2}{n}}}
-2n|{\mathbb{S}^n}|^{{\frac{2}{n}}}t
\right]^{n/2}dt,
\end{align}
where
\begin{equation}
    T_* = \frac{1}{2n}
\left(\frac{P(\Omega)}{|{\mathbb{S}^n}|}\right)^{{\frac{2}{n}}},
\end{equation}
by $A(0)=P(\Omega)$ and \eqref{intatestimates},  we have
\begin{equation}
  \int_0^{T_{\textrm{ext}}}
\left[
P(\Omega)^{{\frac{2}{n}}}
-2n|{\mathbb{S}^n}|^{{\frac{2}{n}}}t
\right]^{n/2}dt  =  \int_0^{T_*}
\left[
P(\Omega)^{{\frac{2}{n}}}
-2n|{\mathbb{S}^n}|^{{\frac{2}{n}}}t
\right]^{n/2}dt.
\end{equation}
In particular, by our optimal extinction time estimates $T_{\textrm{ext}}(\Omega)\leq T_*$ 
and $P(\Omega)^{{\frac{2}{n}}}
-2n|{\mathbb{S}^n}|^{{\frac{2}{n}}}t>0$ for $t\in [0, T_*)$, we have
\begin{equation}
    T_{\textrm{ext}}(\Omega)=T_*= \frac{1}{2n}
\left(\frac{P(\Omega)}{|{\mathbb{S}^n}|}\right)^{{\frac{2}{n}}}.
\end{equation}
Then, by the rigidity case in Proposition  \ref{thm:LSF-Brakke extinction optimal} for outward minimizing level set flow, we conclude that
$\Omega$ is a round ball and the flow is standard  smoothly shrinking round sphere. This completes the  proof of Theorem \ref{sharp parabolic estimates}.
\end{proof}

\subsection{Some remarks and problems}
In the end of the paper, we make some remarks and propose some problems about the related topics to our results.
\begin{remark}\label{remarkonsmooth issue}
    The assumption that $\partial\Omega$ is a smooth hypersurface in our  Theorem \ref{Optimal extinction time estimates for weak mean curvature flow}, Theorem \ref{lpestimates arrival time}, Theorem \ref{sharp parabolic estimates} and Proposition  \ref{thm:LSF-Brakke extinction optimal} is used in the elliptic regularization argument of Proposition \ref{thm:ae-minkowski}  and  invoking Lemma \ref{lem:hull-approximation}
    for obtaining the smooth, but not necessarily mean convex exterior approximations $U_i$.
    Following \cite{Schmidt2015}, the approximation step only requires
\begin{equation}\label{eq:general-assumption}
    \cH^n(\partial\Omega^*)=P(\Omega^*),
    \end{equation}
    which is satisfied when $\partial\Omega$ is smooth. However,  the inequality in Theorem \ref{Optimal extinction time estimates for weak mean curvature flow} indeed holds for bounded Lipschitz domain due to the smoothing effect \cite{ecker1989mean, ecker1991interior}, and the rigidity statement holds by a slight modification of Lemma \ref{lem:hull-approximation}, Proposition \ref{thm:LSF-Brakke extinction optimal}, and Theorem \ref{Optimal extinction time estimates for weak mean curvature flow}\footnote{The equality in Theorem \ref{Optimal extinction time estimates for weak mean curvature flow} implies that $\Omega$ is outward minimizing. Since $\partial\Omega$ is Lipschitz, \eqref{eq:general-assumption} holds with $\Omega^*$ replaced by $\Omega$. Then the statement and proof of Lemma \ref{lem:hull-approximation} are valid verbatim with $\Omega$ in place of $\Omega^*$. Following the similar argument in the proof of  Proposition \ref{thm:LSF-Brakke extinction optimal} and Theorem \ref{Optimal extinction time estimates for weak mean curvature flow}, we obtain a subsequential limit $F\supset\Omega$ of $\{\Omega_i\}$ such that $P(F)=P(\Omega)$, and we prove that $F$ is a round ball. Because $\Omega$ is outward minimizing and the boundary of $F$ has positive curvature at every point, we conclude $\Omega$ is identical to the round ball $F$. Then the equality case in Theorem \ref{Optimal extinction time estimates for weak mean curvature flow} follows.}. Theorem \ref{lpestimates arrival time} is expected to hold for $C^1$ domain with  nonnegative boundary weak mean curvature in $L^2$, and  Theorem \ref{sharp parabolic estimates} is expected to hold for $C^1$ outward minimizing domain with  nonnegative boundary weak mean curvature in $L^2$.
    %Therefore, the smoothness assumption of Theorem \ref{Optimal extinction time estimates for weak mean curvature flow} and Proposition \ref{thm:LSF-Brakke extinction optimal} can be replaced with somewhat general \eqref{eq:general-assumption}.
\end{remark}
\begin{remark}
    For general bounded domain with smooth boundary $\Omega$, taking outward minimizing hull will reduce the perimeter, and we can only guarantee
    the existence of an enhanced motion \cite[Section 8]{Ilm94},
    where the slice measure of the undercurrent need not be a Brakke flow, and the multiplicity in the enhanced motion is also not necessary to be $1$.
    In this case, it is generally challenging to track the evolution of slice mass $\mathbf{M}(T_t)$ for obtaining optimal parabolic isoperimetric estimates as in Theorem \ref{sharp parabolic estimates}.
  On the other hand,  with our optimal extinction time estimate (Proposition  \ref{thm:LSF-Brakke extinction optimal}), we can still follow the proof of Hershkovits \cite[Theorem C]{hershkovits2017isoperimetric} and adapt the varifold version Michael-Simon Sobolev inequality in \cite{tsiamis2026sharp, santilli2026optimal} to replace the Minkowski inequality in Section \ref{intro}, it is expected that one can obtain improved but nonsharp estimate for the parabolic measure of the enhanced
motion:
\begin{align}\label{eq:improved general-parabolic}
\mu_{\mathrm{par}}(X)
    &\!\leq\! \frac{\pi}
{2n(n+2)^2 |\mathbb{B}^n|^{\frac{2}{n}}}
\!\left[\!
P(\Omega)^{\frac{n+2}{n}}\!
-\!
\left(\!
P(\Omega)^{{\frac{2}{n}}}
\!\!-\!\!
\left(\!
\frac{|\mathbb{B}^n|}{|\mathbb{S}^n|}
P(\Omega^*)
\!\right)^{{\frac{2}{n}}}
\!\right)^{\frac{n+2}{2}}\!
\right],
\end{align}
where $\mu_{\mathrm{par}}=\theta \mathcal{H}^{n+2}_{\textrm{par}}\llcorner X$, $\theta$ is the multiplicity in the enhanced motion and $\Omega^*$ is the outward minimizing hull of $\Omega$.  
%On the other hand, because the crucial ingredient is Minkowski inequality\footnote{Chodosh, Eichmair and Koerber \cite{chodosh2026minkowski} showed that Minkowski inequality generally fails for small perturbations of a round
%sphere.}, it is expected that the optimal parabolic isoperimetric estimates also holds when $\Omega$ is mean convex and star-shaped due to \cite{GuanLi2009}.
\end{remark}
\begin{remark}
It would also be interesting to explore  the optimal extinction time estimates and parabolic isoperimetric estimates of mean curvature flow in general ambient manifolds.  For mean curvature flow of smooth
two-dimensional surfaces in Cartan–Hadamard
manifolds of arbitrary dimension, we expect that one can obtain optimal  extinction time estimates and optimal parabolic isoperimetric estimates. More precisely, using the argument in  Hershkovits \cite[Theorem C]{hershkovits2017isoperimetric} and the sharp Willmore inequality  \cite[Theorem 1.4]{schulze2020optimal}
\begin{equation}
\int_{\Gamma} |H|^2\,d\mu
\geq
16\pi+4\kappa |\Gamma|
=
2^2|\mathbb S^2|+4\kappa |\Gamma|,\quad -\kappa\leq 0
\end{equation}
for any integral
$2$-varifold $\Gamma$ in $m$-dimensional Cartan-Hadamard manifold $(N^{n+2}, g)$ with sectional curvature bounded above by $-\kappa\leq 0$,   it is  expected that   for  integral enhanced motion of Brakke flow starting from $2$-dimensional smooth surface $\Sigma$, one can obtain the following optimal extinction time estimates 
\begin{equation}\label{eq:sharp-extinction-CH}
T_{\mathrm{ext}}
\le
\begin{cases}
\displaystyle
\frac{1}{4\kappa}
\log\left(
1+\frac{\kappa |\Sigma|}{4\pi}
\right),
& \kappa>0,\\[1.2ex]
\displaystyle
\frac{|\Sigma|}{16\pi},
& \kappa=0,
\end{cases}
\end{equation}
and optimal parabolic isoperimetric estimates 
\begin{equation}\label{eq:sharp-parabolic-measure-CH}
\mu_{\mathrm{par}}(X)
\le
\begin{cases}
\displaystyle
\frac{\pi |\Sigma|}{32\kappa}
-
\frac{\pi^2}{8\kappa^2}
\log\left(
1+\frac{\kappa |\Sigma|}{4\pi}
\right),
& \kappa>0,\\[1.2ex]
\displaystyle
\frac{|\Sigma|^2}{256},
& \kappa=0,
\end{cases}
\end{equation}
and the above equalities are achieved if and only if $\Sigma$ is  a smooth embedded two dimensional sphere,  bounds a totally geodesic  embedded three dimensional ball   that is isometric to a geodesic ball in the three dimensional simply connected space form of constant sectional curvature $-\kappa\leq 0$. 
\end{remark}
\begin{remark}\label{problems}
    Finally, we propose the following related problems.
    \begin{enumerate} 
        \item Does Theorem \ref{sharp parabolic estimates} hold for more general domains with smooth boundary such as mean convex smooth domains? 
        \item Do  Theorem \ref{Optimal extinction time estimates for weak mean curvature flow}, Theorem \ref{lpestimates arrival time}, and Theorem \ref{sharp parabolic estimates} hold for the situation of general dimensional Cartan-Hadamard manifolds and of general codimensional case?
        \item Are there quantitative stability versions  of Theorem \ref{Optimal extinction time estimates for weak mean curvature flow}, Theorem \ref{lpestimates arrival time}, and Theorem \ref{sharp parabolic estimates}?
         \item Let $\omega^k_1(\partial\Omega)$ be the first $k$-dimensional min-max width of smooth closed hypersurface $\partial \Omega$ and $1\leq k\leq n$ (see  \cite[page 942, (7)]{liokumovich2018weyl}, \cite{guth2009minimax}, \cite{gromov2006dimension} for the definition, or one can also consider other types of min-max width such as Almgren-Pitts min-max width \cite{chodosh2023p}). Is the following extinction time estimates 
        \begin{equation*}
        T_{\mathrm{ext}}(\Omega)\leq \frac{1}{2k}\left(\frac{\omega^k_1(\partial\Omega)}{|\mathbb{S}^k|}\right)^{\frac{2}{k}}
         \end{equation*}
         valid and optimal for certain class of mean curvature flows (for example, convex closed mean curvature flows)?  Our Theorem \ref{Optimal extinction time estimates for weak mean curvature flow} affirmatively answers  this problem when $k=n$. Colding-Minicozzi \cite{colding2008width} affirmatively answered a related version question for convex closed mean curvature flow  when $k=1, 2$, where the definitions of widths $W_1(0), W_2(0)$ in \cite{colding2008width}  are somewhat different from  the definition of widths in \cite[page 942, (7)]{liokumovich2018weyl}.
    \end{enumerate}
\end{remark}

\bibliographystyle{alpha}
\bibliography{reference}

\vspace{3mm}

{\sc \noindent Beomjun Choi, Department of Mathematical Sciences, Korea Advanced Institute of Science
and Technology, 34141 Daejeon,  Republic of
Korea} 

{E-mail:} bchoi@kaist.ac.kr

{\sc \noindent Wenkui Du, School of Mathematics, Hunan University, Changsha, 410082, Hunan Province, China}

{E-mail:} duwenkui@hnu.edu.cn

{\sc \noindent Seung Chul Park, Department of Mathematical Sciences, Korea Advanced Institute of Science
and Technology, 34141 Daejeon,  Republic of
Korea}

{E-mail:} sisyphe@kaist.ac.kr

{\sc \noindent Junseo Youn, Mathematics Department, Pohang University of Science and Technology, 37673 Pohang, Republic of Korea }

{E-mail:} youncurtis@postech.ac.kr.

\end{document}